\def\today{\ifcase \month \or
   January \or February \or March \or April \or
   May \or June \or July \or August \or
   September \or October \or November \or December \fi
   \space\number\day , \number\year}
  \newcommand\@dotsep{4.5}
  \def\@tocline#1#2#3#4#5#6#7{\relax
     \ifnum #1>\c@tocdepth 
     \else
     \par \addpenalty\@secpenalty\addvspace{#2}%
     \begingroup \hyphenpenalty\@M
     \@ifempty{#4}{%
     \@tempdima\csname r@tocindent\number#1\endcsname\relax
        }{%
         \@tempdima#4\relax
           }%
      \parindent\z@ \leftskip#3\relax \advance\leftskip\@tempdima\relax
      \rightskip\@pnumwidth plus1em \parfillskip-\@pnumwidth
       #5\leavevmode\hskip-\@tempdima #6\relax
       \leaders\hbox{$\m@th
       \mkern \@dotsep mu\hbox{.}\mkern \@dotsep mu$}\hfill
       \hbox to\@pnumwidth{\@tocpagenum{#7}}\par
       \nobreak
        \endgroup
         \fi}
\begin{document}


\makeatletter
\@addtoreset{figure}{section}
\def\thefigure{\thesection.\@arabic\c@figure}
\def\fps@figure{h,t}
\@addtoreset{table}{bsection}

\def\thetable{\thesection.\@arabic\c@table}
\def\fps@table{h, t}
\@addtoreset{equation}{section}
\def\theequation{
\arabic{equation}}
\makeatother

\newcommand{\bfi}{\bfseries\itshape}

\newtheorem{theorem}{Theorem}
\newtheorem{acknowledgment}[theorem]{Acknowledgment}
\newtheorem{corollary}[theorem]{Corollary}
\newtheorem{definition}[theorem]{Definition}
\newtheorem{example}[theorem]{Example}
\newtheorem{lemma}[theorem]{Lemma}
\newtheorem{notation}[theorem]{Notation}
\newtheorem{problem}[theorem]{Problem}
\newtheorem{proposition}[theorem]{Proposition}
\newtheorem{question}[theorem]{Question}
\newtheorem{remark}[theorem]{Remark}
\newtheorem{setting}[theorem]{Setting}

\numberwithin{theorem}{section}
\numberwithin{equation}{section}

\renewcommand{\1}{{\bf 1}}
\newcommand{\Ad}{{\rm Ad}}
\newcommand{\Aut}{{\rm Aut}\,}
\newcommand{\ad}{{\rm ad}}
\newcommand{\botimes}{\bar{\otimes}}
\newcommand{\Ci}{{\mathcal C}^\infty}
\newcommand{\Cl}{{\rm Cl}\,}
\newcommand{\de}{{\rm d}}
\newcommand{\dr}{{\rm dr}\,}
\newcommand{\ee}{{\rm e}}
\newcommand{\End}{{\rm End}}
\newcommand{\id}{{\rm id}}
\newcommand{\ie}{{\rm i}}
\newcommand{\Index}{{\rm Index}}
\newcommand{\jump}{{\rm jump}}
\newcommand{\GL}{{\rm GL}}
\newcommand{\Gr}{{\rm Gr}}
\newcommand{\gen}{{\rm gen}}
\newcommand{\Hom}{{\rm Hom}\,}
\newcommand{\Ind}{{\rm Ind}}
\newcommand{\Int}{{\rm Int}\,}
\newcommand{\Ker}{{\rm Ker}\,}
\newcommand{\opp}{{\rm opp}}
\newcommand{\pr}{{\rm pr}}
\newcommand{\Ran}{{\rm Ran}\,}
\newcommand{\RRa}{{\rm RR}}
\newcommand{\rank}{{\rm rank}\,}
\renewcommand{\Re}{{\rm Re}\,}
\newcommand{\SO}{{\rm SO}\,}
\newcommand{\sa}{{\rm sa}}
\newcommand{\spa}{{\rm span}}
\newcommand{\supp}{{\rm supp}}
\newcommand{\tsr}{{\rm tsr}}
\newcommand{\Tr}{{\rm Tr}\,}

\newcommand{\CC}{{\mathbb C}}
\newcommand{\HH}{{\mathbb H}}
\newcommand{\PP}{{\mathbb P}}
\newcommand{\RR}{{\mathbb R}}
\newcommand{\TT}{{\mathbb T}}

\newcommand{\Ac}{{\mathcal A}}
\newcommand{\Bc}{{\mathcal B}}
\newcommand{\Cc}{{\mathcal C}}
\newcommand{\Ec}{{\mathcal E}}
\newcommand{\Fc}{{\mathcal F}}
\newcommand{\Hc}{{\mathcal H}}
\newcommand{\Ic}{{\mathcal I}}
\newcommand{\Jc}{{\mathcal J}}
\newcommand{\Kc}{{\mathcal K}}
\newcommand{\Lc}{{\mathcal L}}
\renewcommand{\Mc}{{\mathcal M}}
\newcommand{\Nc}{{\mathcal N}}
\newcommand{\Oc}{{\mathcal O}}
\newcommand{\Pc}{{\mathcal P}}
\newcommand{\Sc}{{\mathcal S}}
\newcommand{\Vc}{{\mathcal V}}
\newcommand{\Xc}{{\mathcal X}}
\newcommand{\Yc}{{\mathcal Y}}
\newcommand{\Zc}{{\mathcal Z}}
\newcommand{\Wc}{{\mathcal W}}

\newcommand{\Fg}{{\mathfrak F}}
\newcommand{\Ug}{{\mathfrak U}}
\newcommand{\Wg}{{\mathfrak W}}
\newcommand{\Zg}{{\mathfrak Z}}

\renewcommand{\gg}{{\mathfrak g}}
\newcommand{\hg}{{\mathfrak h}}
\newcommand{\kg}{{\mathfrak k}}
\newcommand{\mg}{{\mathfrak m}}
\newcommand{\sg}{{\mathfrak s}}
\newcommand{\rg}{{\mathfrak r}}

\newcommand{\zg}{{\mathfrak z}}

\newcommand{\KK}{\mathbb K}
\newcommand{\NN}{\mathbb N}
\newcommand{\QQ}{\mathbb Q}
\newcommand{\ZZ}{\mathbb Z}

\newcommand{\D}{U}

\makeatletter
\title[Linear dynamical systems of nilpotent Lie groups]{Linear dynamical systems of nilpotent Lie groups}
\author{Ingrid Belti\c t\u a and Daniel Belti\c t\u a}
\address{Institute of Mathematics ``Simion Stoilow'' 
of the Romanian Academy, 
P.O. Box 1-764, Bucharest, Romania}
\email{ingrid.beltita@gmail.com, Ingrid.Beltita@imar.ro}
\email{beltita@gmail.com, Daniel.Beltita@imar.ro}
\makeatother

\begin{abstract} 
We study the topology of orbits of dynamical systems defined by finite-dimensional representations of nilpotent Lie groups. 
Thus, the following dichotomy is established: either the interior of the set of regular points is dense in the representation space, or the complement of the set of regular points is dense, and then the interior of that complement is either empty or dense in the representation space. The regular points are by definition  the points whose orbits are locally compact in their relative topology. 
We thus generalize some results from the recent literature on linear actions of abelian Lie groups. 
As an application, we determine the generalized $ax+b$-groups whose $C^*$-algebras are antiliminary, that is, no closed 2-sided ideal is type~I.
\\
\textit{2010 MSC:} Primary 22E27 Secondary 22E25, 22D25, 17B30 \\
\textit{Keywords:} semidirect product; group action; locally closed orbit; antiliminary locally compact group; Mautner group 
\end{abstract}

\maketitle

\section{Introduction}

One of the main results of the present paper is that if $\pi\colon G\to\End(\Vc)$ is a finite-dimensional representation of a nilpotent Lie group and $\Gamma$ is the set of all regular points $v\in\Vc$ 
(i.e., the corresponding orbit $\pi(G)v$ is locally compact in its relative topology), then either the interior of $\Gamma$ is dense in $\Vc$ or the complement $\Vc\setminus \Gamma$ is a dense subset of $\Vc$ whose interior is either empty or is dense in~$\Vc$. 
(See Corollary~\ref{main_reg}.) 
When $G$ is an abelian Lie group, 
that dichotomy was essentially established in \cite{ArCuOu16} 
and, as shown in Remark~\ref{main_reg_ab} , it is closely related to 
a result of this type that had been obtained  in \cite[Ch. IV, Prop. 8.2]{Pu71} for the coadjoint representation of any connected, simply connected, solvable Lie group~$S$, and the latter result had been used for proving that almost all factors of the von Neumann algebra of $S$ are either type~I or type~II. 
In fact, one of the motivations of the present paper was to complete this picture by obtaining some information on the size of the set of unitary equivalence classes of the remaining, type~III, factor representations. 
This, if $G$ is abelian, we show that either $\Int \Gamma$ is dense in $\Vc$ or  the solvable Lie group $\Vc^*\rtimes G $ is antiliminary.
 (See Corollary~\ref{27August2019}, Remark~\ref{III} and Example~\ref{Mautner}.)

To obtain our results we use some techniques from linear algebra, namely rationality properties  of oblique projections and Moore-Penrose inverses, which allow us to improve the methods of investigation of the paper \cite{ArCuOu16} mentioned above. 
From this perspective, this paper is a sequel to our earlier study from \cite{BB17}. 

In Section~\ref{Sect_prel} we establish some auxiliary results of two types: reduction theory for regular orbits of locally compact group actions, and weight space decompositions with respect to nilpotent Lie groups. 
In Section~\ref{Sect_dichot} we develop the linear algebra tools we need for the investigation of regular points. 
In Section~\ref{Sect_reg} we establish the dichotomies for the sets of regular points in linear dynamical systems of general nilpotent Lie groups (Theorem~\ref{15March2019} and Corollary~\ref{main_reg}). 
Finally, in Section~\ref{Sect_ax+b} we give an application to representation theory of some solvable Lie groups that may not be type~I. 
Specifically, we give  
 a precise characterization of the so-called generalized $ax+b$-groups  whose $C^*$-algebra is antiliminary (Theorem~\ref{post-anti}).

 Besides representation theory \cite{ArCuDa19}, the study of regular points of dynamical systems holds an important role in several research areas, including for instance admissibility in continuous wavelet theory \cite{ArCuDaOu13}, \cite{BrCFM15}, \cite{ArCuOu16}, $C^*$-dynamical systems \cite{Wi07}, \cite{BB18}.
 
 \subsection*{General notation and terminology}
 We denote Lie groups by upper case Roman letters and their Lie algebras by the corresponding lower case Gothic letters. 
By a nilpotent Lie group we always understand a connected simply connected nilpotent Lie group.

For any subset of a real linear vector space $S\subset \Wc$, we denote by
$\langle S \rangle$ the additive subgroup of $(\Wc, +)$ generated by $S$. 

For any finite-dimensional real vector space $\Ug$,  the Grassmann manifold of $\Ug$ is the set $\Gr(\Ug)$ of all linear subspaces of $\Ug$.
When  $\Ug_0\subseteq\Ug$ is linear subspace,
we denote $\Gr_{\Ug_0}(\Ug):=\{\Wg\in\Gr(\Ug)\mid \Ug_0\dotplus\Wg=\Ug\}$. 
(See \cite{BB17} and \cite{ACM13}.)

 For any complex Hilbert space $\Hc$ we denote by $\Bc(\Hc)$ the von Neumann algebra of all bounded linear operators on $\Hc$, and by $\Kc(\Hc)$ the set of all compact operators on $\Hc$.

 For any real finite-dimensional Hilbert spaces $\Hc_1$,  $\Hc_2$ we use  again  the notation  $\Bc(\Hc_1, \Hc_2)$  for the space of the (bounded) $\RR$-linear operators $\Hc_1\to  \Hc_2$. When $\Vc$ is a $\KK$ vector space with $\KK=\RR$ or $\CC$, without considering any Hilbert space structure, 
 we denote by $\End_\KK(\Vc)$ the space of $\KK$-linear operators in $\Vc$.
When $\KK=\RR$ and no confusion arises, we write simply $\End(\Vc)$.

\section{Preliminaries}\label{Sect_prel}

In this section we collect two kinds of technical results for later
use in this paper.
Thus, we firstly develop a purely topological approach to the
regularity of group orbits, based on the idea of reduction of symmetry
groups that is used in other areas, for instance in geometric
mechanics. To this end, we actually establish on their natural level
of generality certain facts on Lie group actions that can be found in
\cite{ArCuDaOu13}. In the second part of this section we record some basic
properties of weight-space decompositions of representations of
nilpotent Lie algebras, a topic that is quite classical in Lie theory,
for instance in the study of Cartan subalgebras. Here we just point
out some features of the interaction between the weight-space
decompositions  and the operation of complexification. This topic was
also discussed in \cite{ArCuDaOu13} in the special case of abelian Lie
algebras.

\subsection{Reduction theory for regular orbits of locally compact group actions}

\begin{definition}
\normalfont 
For any fixed continuous action  $G\times X\to X$, $(g,x)\mapsto g.x$ of a locally compact group $G$ 
on a topological space $X$, a point $x_0\in X$ is called a \emph{regular point} if its corresponding orbit $G.x_0$
is locally compact, and then $G.x_0$ is called a \emph{regular orbit}. 
Here and everywhere in this paper, the orbits are endowed with their relative topology, that is, we regard the orbits as topological subspaces of the ambient space~$X$. 
\end{definition}

One of the reason why the regular orbits are interesting is the following result of \cite{aHWi02}, where
we use the notation $\Cc_0(X)$ for the continuous functions that vanish at $\infty$ on a locally comact space $X$.

\begin{lemma}\label{anHuefWilliams}
	Let $\alpha\colon G \times X\to X$ be a continuous action of a locally compact  group $H$ on a locally compact space $X$. 
	Assume that both $G$ and $X$ are second countable and $H$ is amenable.  
	If 
	 $H$ is abelian, 
	then the largest  
	type~I ideal of the crossed-product $C^*$-algebra
	$G\ltimes \Cc_0(X)$ is $G \ltimes \Cc_0(X_0)$, where $X_0$ is the set of all points $x\in X$ having a neighborhood $U_x$ for which the orbit of every point in $U_x$ is regular. 
\end{lemma}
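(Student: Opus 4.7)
The plan is to combine two standard ingredients: the Effros--Hahn ideal correspondence for amenable group actions on second countable spaces, and the Mackey--Glimm characterization of type~I crossed products by abelian groups in terms of regularity of orbits.

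First I would verify that $X_0$ is $G$-invariant and open. Openness is built into the definition. For $G$-invariance, note that regularity is a property of the orbit, not of a base point, so if $U_x$ is a neighborhood of $x$ consisting of regular points, then $g.U_x$ is a neighborhood of $g.x$ consisting of regular points for any $g\in G$. Hence $\Cc_0(X_0)$ is a $G$-invariant ideal of $\Cc_0(X)$, and $G\ltimes\Cc_0(X_0)$ is a closed two-sided ideal of $G\ltimes\Cc_0(X)$.

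Next I would show that $G\ltimes\Cc_0(X_0)$ is itself of type~I. By the definition of $X_0$, every orbit in $X_0$ is locally closed, so the quotient $X_0/G$ is $T_0$ and, by Effros' theorem, carries a standard Borel structure. Combined with the hypothesis that $G$ is abelian (so that all stabilizers are automatically type~I), the Mackey--Glimm machine as presented in \cite{Wi07} shows that $G\ltimes\Cc_0(X_0)$ is type~I. Conversely, I would argue that every type~I ideal is contained in $G\ltimes\Cc_0(X_0)$. Let $J$ be such an ideal. By the Effros--Hahn theorem (applicable thanks to amenability and second countability), $J=G\ltimes\Cc_0(U)$ for some $G$-invariant open $U\subseteq X$. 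Since this restricted crossed product is type~I, the converse of the Mackey--Glimm criterion forces every orbit in $U$ to be locally closed, that is, regular. But then, for every $u\in U$, the open set $U$ itself witnesses that $u\in X_0$, so $U\subseteq X_0$ and $J\subseteq G\ltimes\Cc_0(X_0)$.

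The main technical burden is the second step, where one must actually construct enough irreducible type~I subquotients to cover the spectrum of $G\ltimes\Cc_0(X_0)$; this is a nontrivial piece of Mackey theory and is precisely the place where amenability, second countability, and the abelian hypothesis are all used simultaneously. The remainder of the argument is the formal bookkeeping of $G$-invariant open sets of $X$.
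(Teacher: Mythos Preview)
Your outline contains a genuine gap in the third step. You write: ``By the Effros--Hahn theorem \dots\ $J=G\ltimes\Cc_0(U)$ for some $G$-invariant open $U\subseteq X$.'' This is not what Effros--Hahn (i.e., the Gootman--Rosenberg theorem) says. That theorem describes \emph{primitive} ideals as induced from stabilizer subgroups; it does not assert that every closed two-sided ideal of $G\ltimes\Cc_0(X)$ arises from a $G$-invariant open subset of $X$. In fact the claim is false already for trivial actions: if $G=\ZZ$ acts trivially on a one-point space, then $G\ltimes\Cc_0(\{pt\})\cong C(\TT)$ has a continuum of (type~I) ideals, while the only $G$-invariant open subsets of $\{pt\}$ are $\emptyset$ and $\{pt\}$. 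So your argument that an arbitrary type~I ideal $J$ must sit inside $G\ltimes\Cc_0(X_0)$ does not go through as stated.

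The paper avoids this difficulty entirely: it simply observes that amenability and second countability make the transformation group EH-regular (in the sense of \cite{aHWi02}), and then invokes \cite[Th.~3.16 and Rem.~3.17]{aHWi02}, where the identification of the largest type~I ideal is carried out via a careful analysis of the primitive ideal space rather than via a correspondence between all ideals and invariant open sets. If you want to keep your self-contained approach, the correct shape of the maximality argument is to show that the \emph{quotient} $G\ltimes\Cc_0(X)/G\ltimes\Cc_0(X_0)\cong G\ltimes\Cc_0(X\setminus X_0)$ is antiliminary; but proving that requires exactly the an~Huef--Williams analysis of which primitive ideals have type~I quotients, so you end up citing the same source.
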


\begin{proof}
	Since both $G$ and $X$ are second countable and $G$ is amenable, it follows that $G\ltimes \Cc_0(X)$ is EH-regular, as noted in \cite[page 537]{aHWi02}. 
	Then the assertion follows by \cite[Th. 3.16 and Rem. 3.17]{aHWi02}.
\end{proof}

\begin{remark}\label{Will}
	\normalfont
	In the case when $X$ is actually  a vector space $\Vc$ and $\alpha$ is an action by linear maps,  we can form the semidirect product group $\Vc \rtimes_\alpha G$. Then
	there is a $*$-isomorphism 
	$C^*(\Vc \rtimes_\alpha G)\simeq G \ltimes_{\alpha^*} \Cc_0(\Vc^*)$, by \cite[Ex. 3.16]{Wi07}, 
	where   $\alpha^{*}\colon G \times\Cc_0(\Vc^*)\to\Cc_0(\Vc^*)$, 
	$\alpha^{*}(g,f):=f\circ \alpha_{g^{-1}}$.
\end{remark}

The next lemma is a generalization of \cite[Cor. 2.3]{ArCuDaOu13} from Lie group actions to locally compact group actions.

\begin{lemma}\label{ACDO2.3_new}
	Let $G\times X\to X$, $(g,x)\mapsto g.x$, be a continuous action of a second countable, locally compact group on a Hausdorff topological space. 
	For any $x_0\in X$ with its orbit $\Oc:=G.x_0$, 
	then the following assertions are equivalent: 
	\begin{enumerate}[{\rm(i)}]
		\item\label{ACDO2.3_new_item1} The orbit $\Oc$ is regular. 
		 \item\label{ACDO2.3_new_item2} For every compact neighborhood $K$ of $\1\in G$ there exists a neighborhood $V$ of $x_0\in X$ with $V\cap \Oc\subseteq K.x_0$. 
		\item\label{ACDO2.3_new_item3} There exist a compact neighborhood $K$ of $\1\in G$ and a neighborhood $V$ of $x_0\in X$ with $V\cap \Oc\subseteq K.x_0$. 
	\end{enumerate}
If the space $X$ is first countable, the group $G$ is noncompact and countable at infinity, and the mapping $G\to X$, $g\mapsto g.x_0$, is injective, then the above assertions are equivalent to the following one: 
\begin{enumerate}[{\rm(i)}]
    \setcounter{enumi}{3}
	\item\label{ACDO2.3_new_item4} 
	There is no sequence $\{g_n\}_{n\in\NN}$ in $G$ satisfying $\lim\limits_{n\in\NN}g_n=\infty$ and $\lim\limits_{n\in\NN}g_n.x_0=x_0$. 
\end{enumerate}
If $X$ is locally compact and both $G$ and $X$ are noncompact, then \eqref{ACDO2.3_new_item5}$\Rightarrow$\eqref{ACDO2.3_new_item1}, 
where,  
\begin{enumerate}[{\rm(i)}]
	\setcounter{enumi}{4}
	\item\label{ACDO2.3_new_item5} 
	One has $\lim\limits_{g\to\infty}g.x_0=\infty$ in $X$. 
\end{enumerate}

\end{lemma}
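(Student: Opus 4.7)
The plan is to prove the chain (i)$\Leftrightarrow$(ii)$\Leftrightarrow$(iii), then bring in (iv) under the extra hypotheses, and finally handle (v)$\Rightarrow$(i) separately. The easy implications are (iii)$\Rightarrow$(i), (ii)$\Rightarrow$(iii), and (iii)$\Rightarrow$(iv). For (iii)$\Rightarrow$(i): if $V\cap\Oc\subseteq K.x_0$, then $V\cap\Oc$ is an open neighborhood of $x_0$ in the relative topology of $\Oc$, and its closure inside $\Oc$ is contained in the compact set $K.x_0$, so $x_0$ has a compact neighborhood in $\Oc$; by $G$-equivariance every point of $\Oc$ does. The implication (ii)$\Rightarrow$(iii) is trivial.

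The main content is (i)$\Rightarrow$(ii), which I would establish by a Baire category argument in the spirit of Effros. Fix a compact neighborhood $K$ of $\1$, and pick a symmetric open neighborhood $W$ of $\1$ with $\overline W$ compact and $\overline W\cdot\overline W\subseteq K$. Since $G$ is second countable and locally compact, it is $\sigma$-compact, so there is a countable family $\{g_n\}_{n\in\NN}\subseteq G$ with $G=\bigcup_{n\in\NN}g_n W$, hence
\begin{equation*}
\Oc=\bigcup_{n\in\NN}g_n\overline W.x_0,
\end{equation*}
a countable union of compact, hence closed, subsets of $\Oc$. By assumption $\Oc$ is locally compact Hausdorff, so it is a Baire space, and some $g_{n_0}\overline W.x_0$ has nonempty interior in $\Oc$; translating by $g_{n_0}^{-1}$, there exists $w_0\in\overline W$ with $w_0.x_0\in\Int_{\Oc}(\overline W.x_0)$, whence
\begin{equation*}
x_0\in\Int_{\Oc}(w_0^{-1}\overline W.x_0)\subseteq\Int_{\Oc}(\overline W\cdot\overline W.x_0)\subseteq\Int_{\Oc}(K.x_0).
\end{equation*}
Choosing an open set $V\subseteq X$ with $V\cap\Oc=\Int_{\Oc}(K.x_0)$ yields (ii).

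Under the additional assumptions, (iii)$\Rightarrow$(iv) is direct: if $g_n\to\infty$ with $g_n.x_0\to x_0$, then eventually $g_n.x_0\in V\cap\Oc\subseteq K.x_0$, hence by injectivity of $g\mapsto g.x_0$ we have $g_n\in K$, which is compact, contradicting $g_n\to\infty$. For (iv)$\Rightarrow$(iii), I argue by contraposition. Choose an increasing compact exhaustion $G=\bigcup_n K_n$ with each $K_n$ a compact neighborhood of $\1$ and $K_n\subseteq\Int K_{n+1}$, obtainable from countability at infinity and local compactness; and a decreasing base $\{V_n\}_{n\in\NN}$ of neighborhoods of $x_0$ from first countability. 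If (iii) fails, then for every $n$ one has $V_n\cap\Oc\not\subseteq K_n.x_0$, so there is $g_n\in G$ with $g_n.x_0\in V_n\cap\Oc$ and $g_n\notin K_n$ (using injectivity of the orbit map). Then $g_n.x_0\to x_0$ since $\{V_n\}$ is a base, while $g_n\to\infty$ because any compact set lies in some $K_N$ and $g_n\notin K_n\supseteq K_N$ for $n\ge N$, contradicting (iv).

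Finally, for (v)$\Rightarrow$(i): pick an open neighborhood $U$ of $x_0$ with $\overline U$ compact, using local compactness of $X$. By (v) there is a compact $K\subseteq G$ with $g.x_0\notin\overline U$ for all $g\notin K$; after replacing $K$ by $K\cup L$ for any compact neighborhood $L$ of $\1$, $K$ is a compact neighborhood of $\1$ and $U\cap\Oc\subseteq K.x_0$, so (iii) holds and hence (i). The genuine obstacle is the implication (i)$\Rightarrow$(ii); the other steps are essentially bookkeeping. The Baire-category step is exactly where one pays for the hypothesis of second countability of $G$, and it is the reason the statement generalizes the Lie-group formulation of \cite{ArCuDaOu13} to the locally compact setting.
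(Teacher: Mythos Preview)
Your proof is correct and the core Baire-category argument for (i)$\Rightarrow$(ii) is essentially identical to the paper's (you use a symmetric $W$ with $\overline W\cdot\overline W\subseteq K$ where the paper uses $L$ with $L^{-1}L\subseteq K$, which amounts to the same thing). Two peripheral steps differ slightly. For the equivalence with (iv), the paper proves (iv)$\Rightarrow$(ii) by negating (ii) with a \emph{fixed} compact neighborhood $K$, obtaining $g_n\notin K$ with $g_n.x_0\to x_0$, and then using (iv) to extract a subsequence converging to some $h$, which injectivity forces to be $\1$, contradicting $g_n\notin K$; your exhaustion argument for (iv)$\Rightarrow$(iii) is an equally valid alternative that avoids the subsequence step at the cost of constructing the $K_n$. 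For (v)$\Rightarrow$(i), the paper passes to one-point compactifications $\widehat G$, $\widehat X$ and observes that the orbit map extends continuously, so $\Oc=\widehat\psi(\widehat G)\setminus\{\infty_X\}$ is locally closed; your route via (iii) is more elementary and sidesteps the compactification machinery entirely.
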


\begin{proof} 
\eqref{ACDO2.3_new_item1}$\Rightarrow$\eqref{ACDO2.3_new_item2}
Let $L$ be any compact neighborhood of $\1\in G$ with $L^{-1}L\subseteq K$. 
One has $G=\bigcup\limits_{g\in G}gL$ hence, since $G$ is second countable, it has a subset $C\subseteq G$ that is at most countable and satisfies $G=\bigcup\limits_{c\in C}cL$. 
Since $\Oc=G.x_0$, we then obtain $\Oc=\bigcup\limits_{c\in C}cL.x_0$. 
This is a countable union of compact subsets of $\Oc$. 
Since $\Oc$ is locally compact by hypothesis, it then follows by Baire's theorem that there exists $c\in C$ for which the set $cL.x_0\subseteq \Oc$ has nonempty interior. 
Using the homeomorphism $\Oc\to\Oc$, $x\mapsto c^{-1}.x$, we then obtain that the interior of $L.x_0$ is nonempty, hence there exists $g\in L$ for which $g.x_0$ belongs to the interior of $L.x_0$. 
Then $x_0$ belongs to the interior of $g^{-1}L.x_0$. 
On the other hand $g^{-1}L.x_0\subseteq L^{-1}L.x_0\subseteq K.x_0$, 
hence $x_0$ belongs to the interior of $K.x_0$, which is equivalent to Assertion~\eqref{ACDO2.3_new_item2}.

\eqref{ACDO2.3_new_item2}$\Rightarrow$\eqref{ACDO2.3_new_item3} 
Obvious. 

\eqref{ACDO2.3_new_item3}$\Rightarrow$\eqref{ACDO2.3_new_item1} 
It follows by \eqref{ACDO2.3_new_item3} that $K.x_0$ is a compact neighborhood of $x_0\in \Oc$. 
Since the group $G$ acts transitively on $\Oc$ by homeomorphisms, 
it then follows that every point of $\Oc$ has a compact neighborhood, hence $\Oc$ is locally compact. 

Now assume that $G$ is noncompact and countable at infinity, hence it has a sequence of compact subsets $K_1\subseteq K_2\subseteq\cdots\subseteq G$ with $\bigcup\limits_{n\ge 1}K_n=G$. 
We also assume that $X$ is first countable and the mapping $G\to X$, $g\mapsto g.x_0$, is injective. 

\eqref{ACDO2.3_new_item3}$\Rightarrow$\eqref{ACDO2.3_new_item4} 
Let $\{g_n\}_{n\in\NN}$ be a sequence in $G$ with  $\lim\limits_{n\in\NN}g_n=\infty$ and $\lim\limits_{n\in\NN}g_n.x_0=x_0$. 
Using the notation of \eqref{ACDO2.3_new_item3}, there exists $n_V\in\NN$ with $g_n.x_0\in V\cap\Oc\subseteq K.x_0$ for every $n\ge n_V$. 
Since the mapping $G\to X$, $g\mapsto g.x_0$ is injective, we then obtain $g_n\in K$ for every $n\ge n_V$. 
Since the group $G$ is noncompact, this contradicts the assumption $\lim\limits_{n\in\NN}g_n=\infty$. 

\eqref{ACDO2.3_new_item4}$\Rightarrow$\eqref{ACDO2.3_new_item2} 
Reasoning by contradiction, we assume that there exists a compact neighborhood $K$ of $\1\in G$  such that for every neighborhood $V$ of $x_0\in X$ one has $V\cap \Oc\not\subseteq K.x_0$. 
Since $X$ is first countable, there exists a countable base of  neighborhoods $\{V_n\}_{n\in\NN}$ of $x_0\in X$. 
Then for every $n\in\NN$ one has by assumption $V_n\cap \Oc\not\subseteq K.x_0$, hence there exists $y_n\in V_n\cap\Oc$ with $y_n\not\in K.x_0$. 
Since $y_n\in\Oc$ and the mapping $G\to X$, $g\mapsto g.x_0$, is injective, there exists a unique element $g_n\in G$ with $y_n=g_n.x_0$. 
Moreover, since $y_n\in V_n$ for all $n\in\NN$, we obtain $\lim\limits_{n\in\NN}g_n.x_0=x_0$. 

It then follows by the hypothesis \eqref{ACDO2.3_new_item4} that one does not have $\lim\limits_{n\in\NN}g_n=\infty$. 
Then, selecting a suitable subsequence, one may assume that there exists $h\in G$ with $\lim\limits_{n\in\NN}g_n=h$, 
hence  $h.x_0=\lim\limits_{n\in\NN}g_n.x_0=x_0$
Since the mapping $G\to X$, $g\mapsto g.x_0$, is injective, it then follows that $h=\1\in G$, hence $\lim\limits_{n\in\NN}g_n=\1$. 
On the other hand $g_n.x_0=y_n\not\in K.x_0$ hence, by the injectivity of the mapping $g\mapsto g.x_0$ again, we obtain $g_n\not\in K$ for all $n\in\NN$. 
Since $K$ is a neighborhood of $\1\in G$, we thus obtain a contradiction. 

We now assume that the space $X$ is locally compact and noncompact. 

\eqref{ACDO2.3_new_item5}$\Rightarrow$\eqref{ACDO2.3_new_item1} 
Denote by $\widehat{G}=G\sqcup\{\infty_G\}$ and $\widehat{X}=X\sqcup\{\infty_X\}$ the one-point compactifications of $G$ and $X$, respectively, and define the continuous mapping $\psi\colon G\to X$, $\psi(g):=g.x_0$. 
It follows by the hypothesis~\eqref{ACDO2.3_new_item5} that  $\psi$ extends by continuity to a continuous mapping $\widehat{\psi}\colon\widehat{G}\to\widehat{X}$ with $\widehat{\psi}(\infty_G)=\infty_X$. 
Since $\widehat{G}$ is compact, it follows that $\widehat{\psi}(\widehat{G})$ is a compact subset of $\widehat{X}$. 
Then $\widehat{\psi}(\widehat{G})\setminus\{\infty_X\}$ is a locally closed subset of $\widehat{X}$. 
Since $\widehat{\psi}(\widehat{G})\setminus\{\infty_X\}\subseteq X$ and $X$ is an open subset of $\widehat{X}$, 
it also follows that $\widehat{\psi}(\widehat{G})\setminus\{\infty_X\}$ is a locally closed subset of $X$. 
Since $\widehat{\psi}(\widehat{G})\setminus\{\infty_X\}=\psi(G)=G.x_0=\Oc$, we thus see that $\Oc$ is a locally cosed subset of $X$, and we are done. 
\end{proof}

The following lemma is a generalization of \cite[Prop. 3.1]{ArCuDaOu13} from Lie group actions to locally compact group actions.
For a continous action $G \times X \to X$, $(g, x) \mapsto g.x$ of $G$ in a topological space $X$, and $x \in X$, we denote by $G(x)$ the corresponding isotropy group, that is,
$G(v):=\{g\in G\mid  g.x=x\}$.

\begin{lemma}\label{ACDO3.1_gen}
	Let $G\times X\to X$ and $G\times W\to W$ be continuous actions of a second countable, locally compact group 
	on Hausdorff topological spaces
	and assume that $q\colon X\to W$ is a continuous surjective mapping which is $G$-equivariant. 
	For any $x_0\in X$ we denote $w_0:=q(x_0)$, $\Oc:=G.x_0$, $\Oc_0:=G(w_0).x_0$, and $\Oc_{w_0}:=G.w_0$. 
	Then the following assertions hold: 
	\begin{enumerate}[{\rm(i)}]
		\item\label{ACDO3.1_gen_item1} One has $\Oc\cap q^{-1}(w_0)=\Oc_0$ and $q(\Oc)=\Oc_{w_0}$. 
		\item\label{ACDO3.1_gen_item2} If $\Oc$ is a locally 
		compact subset of $X$, then $\Oc_0$ is a locally 
		compact subset of~$X$. 
		\item\label{ACDO3.1_gen_item3} If $\Oc_0$ is a locally 
		compact subset of $X$ and $\Oc_{w_0}$ is a locally 
		compact subset of~$W$, then $\Oc$ is a locally 
		compact subset of $X$. 
		\item\label{ACDO3.1_gen_item4} If $\Oc_{w_0}$ is a locally  
		compact subset of~$W$, then one has 
		$$\Oc\text{ locally compact }\subseteq X\iff 
		\Oc_0\text{ locally compact }\subseteq X.$$
 	\end{enumerate}
\end{lemma}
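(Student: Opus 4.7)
The plan is to reduce everything to the criterion of Lemma~\ref{ACDO2.3_new}. Part~(i) should be immediate from the $G$-equivariance of $q$: applying $q$ to the orbit gives $q(\Oc)=q(G.x_0)=G.q(x_0)=\Oc_{w_0}$, while $g.x_0\in q^{-1}(w_0)$ holds iff $g.w_0=w_0$, i.e.\ $g\in G(w_0)$, yielding $\Oc\cap q^{-1}(w_0)=\Oc_0$. For~(ii) I will argue that since $W$ is Hausdorff, $\{w_0\}$ is closed, so $q^{-1}(w_0)$ is closed in $X$; by~(i), $\Oc_0$ is therefore closed in $\Oc$, and a closed subset of a locally compact Hausdorff space is locally compact.

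The core of the argument is~(iii). First I need to note that $G(w_0)$, being the preimage of the closed point $\{w_0\}$ under the continuous map $g\mapsto g.w_0$, is a closed (hence second countable, locally compact) subgroup of $G$, so Lemma~\ref{ACDO2.3_new} applies also to the $G(w_0)$-action on $X$ with orbit $\Oc_0$. From the hypothesis on $\Oc_0$ I obtain a compact neighborhood $K_0$ of $\1$ in $G(w_0)$ and a neighborhood $V_0$ of $x_0$ in $X$ with $V_0\cap\Oc_0\subseteq K_0.x_0$. Next, by continuity of the map $(k,y)\mapsto k^{-1}.y$ at $(\1,x_0)$, I can pick a compact neighborhood $K_1$ of $\1$ in $G$ (as small as I wish) and a neighborhood $V$ of $x_0$ in $X$ with $k_1^{-1}.y\in V_0$ for all $(k_1,y)\in K_1\times V$. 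The crucial flexibility is the equivalence~(i)$\Leftrightarrow$(ii) in Lemma~\ref{ACDO2.3_new}, which, applied to $\Oc_{w_0}$, produces a neighborhood $V_1$ of $w_0$ in $W$ matching this particular $K_1$, with $V_1\cap\Oc_{w_0}\subseteq K_1.w_0$.

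Setting $V':=V\cap q^{-1}(V_1)$, I claim every $y\in V'\cap\Oc$ lies in $K_1 K_0.x_0$: from $q(y)\in V_1\cap\Oc_{w_0}$ one chooses $k_1\in K_1$ with $q(y)=k_1.w_0$; then by construction $k_1^{-1}.y\in V_0$, and $q(k_1^{-1}.y)=w_0$ places $k_1^{-1}.y$ in $\Oc\cap q^{-1}(w_0)=\Oc_0$, hence in $V_0\cap\Oc_0\subseteq K_0.x_0$, so $y\in K_1 K_0.x_0$. Since $K_1 K_0$ is a compact neighborhood of $\1$ in $G$, Lemma~\ref{ACDO2.3_new}~(iii) returns the local compactness of $\Oc$. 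Part~(iv) is then the conjunction of~(ii) and~(iii). The main obstacle I anticipate is precisely the coordination in~(iii): the element $k_1$ dictated by $\Oc_{w_0}$ must simultaneously return $y$ to the slice $V_0$ where the information about $\Oc_0$ becomes usable, and making this work hinges on the freedom to shrink $K_1$ granted by the equivalence of the first three conditions of Lemma~\ref{ACDO2.3_new}.
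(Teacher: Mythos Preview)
Your proof is correct and follows essentially the same route as the paper's. The only cosmetic differences are that the paper verifies condition~(ii) of Lemma~\ref{ACDO2.3_new} for $\Oc$ (starting from an arbitrary compact neighborhood $K$ and arranging $K_0K_1\subseteq K$), whereas you verify condition~(iii) directly with the single compact neighborhood $K_1K_0$, and the roles/names of $K_0$ and $K_1$ are interchanged; the underlying mechanism---pull back to the fiber via an element provided by $\Oc_{w_0}$, then invoke the slice information from $\Oc_0$---is identical.
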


\begin{proof}
	\eqref{ACDO3.1_gen_item1} 
	This is straightforward. 
	
	\eqref{ACDO3.1_gen_item2} 
	This follows by Assertion~\eqref{ACDO3.1_gen_item1} since the intersection of 
	a compact set with a closed set is compact. 
	
	\eqref{ACDO3.1_gen_item3}	
	Let $K$ be an arbitrary compact neighborhood of $\1\in G$, 
	and select any compact neighborhood 
	$K_1$ of $\1\in G$  
	with $K_1$ contained in the interior of $K$. 
	
	It follows by the hypothesis that $\Oc_{0}$ is locally compact along with 
	Lemma~\ref{ACDO2.3_new} and Assertion~\eqref{ACDO3.1_gen_item2} that  there exist 
	a neighborhood $V_1$ of $x_0\in X$ satisfying 
	$$V_1\cap \Oc\cap q^{-1}(w_0)\subseteq K_1.x_0.$$
	Using the continuity of the group action $G\times X\to X$ 
	at $(\1,x_0)\in G\times X$ we now select a compact neighborhood $K_0$ of $\1\in G$ and a neighborhood $V_2$ of $x_0\in X$ with $$K_0^{-1}.V_2\subseteq V_1.$$
	Since $K_1$ is a compact contained in the interior of $K$ we may shrink $K_0$ in order to have 
	$$K_0K_1\subseteq K.$$
	Using the hypothesis that $\Oc_{w_0}$ is locally compact in $W$, we obtain by Lemma~\ref{ACDO2.3_new} again a neighborhood $V_{w_0}$ of $w_0\in W$ with 
	$$V_{w_0}\cap \Oc_{w_0}\subseteq K_0.w_0.$$
	We now prove that $V:=V_2\cap q^{-1}(V_{w_0})$ is a neighborhood of $x\in X$ satisfying 
	$$V\cap\Oc\subseteq K.x_0$$
	and then $\Oc$ is locally closed by Lemma~\ref{ACDO2.3_new}. 
	
	It is easily seen that $V$ is a neighborhood of $x\in X$. 
	To prove the above inclusion, let $x\in V\cap\Oc$ arbitrary. 
	Then one has $q(x)\in q(q^{-1}(V_{w_0}))=V_{w_0}$ since $q$ is surjective, and on the other hand  $q(x)\in q(\Oc)=\Oc_{w_0}$ by Assertion~\eqref{ACDO3.1_gen_item1}, 
	hence $q(x)\in V_{w_0}\cap \Oc_{w_0}\subseteq K_0.w_0$. 
	Thus there exists $k_0\in K_0$ with $q(x)=k_0.w_0$, 
	and then $k_0^{-1}.x\in q^{-1}(w_0)$. 
	Since $x\in\Oc$, we also obtain $k_0^{-1}.x\in\Oc$. 
	Moreover, since $x\in V\subseteq V_2$, we obtain  
	$$k_0^{-1}.x\in q^{-1}(w_0)\cap \Oc\cap K_0^{-1}.V_2 
	\subseteq q^{-1}(w_0)\cap \Oc\cap V_1\subseteq K_1.x_0$$
	hence $x\in K_0K_1.x_0\subseteq K.x_0$. 
	
	\eqref{ACDO3.1_gen_item4} 
This directly follows by Assertions \eqref{ACDO3.1_gen_item2}--\eqref{ACDO3.1_gen_item3}. 
\end{proof}

\subsection{Weight space decompositions with respect to nilpotent Lie algebras}

\begin{notation}
\normalfont
We now introduce some notation to be used in the following, unless otherwise specified.
\begin{itemize}
	\item $\Vc$ is a finite-dimensional real vector space with $m:=\dim\Vc$;
	\item $\gg$ is a nilpotent Lie algebra with its corresponding Lie group $G=(\gg,\cdot)$, where $\cdot$ is given by the Baker-Campbell-Hausdorff formula; 
	\item $\gg_\CC:=\CC\otimes_\RR\gg=\gg\dotplus\ie\gg$ is the complexification of $\gg$, 
	with  its involutive antilinear map $\gg_\CC\to\gg_\CC$, $x\mapsto\bar x$ satisfying $\bar x=x$ for all $x\in\gg$; 
	\item the linear dual space of the complex Lie algebra $\gg_\CC$ is denoted by $\gg_\CC^*$, 
	with its involutive antilinear map $\gg_\CC^*\to\gg_\CC^*$, $\lambda\mapsto\bar\lambda$, where $\bar\lambda(x):=\overline{\lambda(\bar x)}$ for all $x\in\gg_\CC$ and $\lambda\in\gg_\CC^*$;
	\item $\rho\colon\gg\to\End(\Vc)$ is a Lie algebra representation, 
	with its corresponding extension to a morphism of complex Lie algebras $\rho\colon\gg_\CC\to\End_\CC(\Vc_\CC)$; 
	\item If  $\Vc_\CC:=\CC\otimes_\RR\Vc$ denotes the complexification of $\Vc$, then $C\colon\Vc_\CC\to\Vc_\CC$ is the involutive antilinear map satisfying $Cv=v$ for all $v\in\Vc$.
\end{itemize}
\end{notation}

\begin{remark}\label{extra1}
\normalfont
For every $x\in \gg_\CC$, $C\rho(x)C=\rho(\bar x)$. 
Indeed, let $x_1,x_2\in\gg$ with $x=x_1+\ie x_2$. 
Then $C\rho(x)C=C(\rho(x_1)+\ie\rho(x_2))C=\rho(x_1)-\ie\rho(x_2)=\rho(\bar x)$.
\end{remark}

\begin{lemma}\label{act1}
For every $\lambda\in\gg_\CC^*$, the set 
$$\Vc_\CC^\lambda:=
\bigcap\limits_{x\in\gg_\CC}\Ker(\rho(x)-\lambda(x)\id)^m\subseteq\Vc_\CC$$
is an invariant subspace for the representation $\rho\colon\gg_\CC\to\End_\CC(\Vc_\CC)$, 
and one has 
$$\Vc_\CC=\bigoplus\limits_{\lambda\in\Lambda}\Vc_\CC^\lambda$$
where the set $\Lambda:=\{\lambda\in\gg_\CC^*\mid \Vc_\CC^\lambda\ne\{0\} \}$ is finite and for every $\lambda\in\Lambda$ one has $[\gg_\CC,\gg_\CC]\subseteq\Ker\lambda$.  
\end{lemma}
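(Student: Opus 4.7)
The statement is the classical generalized weight-space decomposition for a representation of a nilpotent Lie algebra on a finite-dimensional complex vector space. My plan is to proceed in three steps: (i) establish the $\rho$-invariance of each $\Vc_\CC^\lambda$, (ii) produce the direct sum decomposition (together with the finiteness of $\Lambda$) by induction on $m=\dim\Vc$, and (iii) deduce $[\gg_\CC,\gg_\CC]\subseteq\Ker\lambda$ from a trace argument.

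For the invariance step (i), the key tool is the commutator identity in the associative algebra $\End_\CC(\Vc_\CC)$,
\[
A^k B=\sum_{j=0}^{k}\binom{k}{j}(\ad A)^j(B)\cdot A^{k-j},
\]
applied with $A:=\rho(x)-\lambda(x)\id$ and $B:=\rho(y)$; then $(\ad A)^j(B)=\rho(\ad(x)^j y)$. Since $\gg$ is nilpotent, there exists $c\ge 1$ with $\ad(x)^c=0$ for every $x\in\gg_\CC$, and on the other hand the ascending chain $\Ker A^k$ stabilizes by $k=m$. Taking $k:=m+c$ and applying both sides to $v\in\Vc_\CC^\lambda$, every term on the right-hand side vanishes (for $j\ge c$ because $\ad(x)^j y=0$, for $j<c$ because $k-j\ge m$), so $\rho(y)v\in\Ker(\rho(x)-\lambda(x)\id)^{m+c}=\Ker(\rho(x)-\lambda(x)\id)^m$ for every $x\in\gg_\CC$; hence $\rho(y)v\in\Vc_\CC^\lambda$.

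For (ii) I induct on $m$, with trivial base $m=0$, and in the inductive step I use the dichotomy: either some $\rho(x)$ has $\ge 2$ distinct eigenvalues on $\Vc_\CC$, or every $\rho(x)$ has a unique eigenvalue. In the first case, the primary decomposition $\Vc_\CC=\bigoplus_j W_j$ with $W_j:=\Ker(\rho(x)-\mu_j\id)^m$ yields proper $\gg_\CC$-invariant summands (each $W_j$ is invariant by the single-operator version of the argument in (i)); the inductive hypothesis decomposes each $W_j$ into $\gg_\CC$-weight spaces, and since any $\Vc_\CC^\nu$ must lie inside the unique $W_j$ with $\mu_j=\nu(x)$, reassembling gives $\Vc_\CC=\bigoplus_\nu\Vc_\CC^\nu$. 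In the second case, $\lambda(x):=\frac{1}{m}\Tr\rho(x)$ is $\CC$-linear in $x$, hence $\lambda\in\gg_\CC^*$, and $\rho(x)-\lambda(x)\id$ has $0$ as unique eigenvalue and is therefore nilpotent, so $\Vc_\CC=\Vc_\CC^\lambda$. Finiteness of $\Lambda$ is automatic from the finite direct sum.

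Finally, for (iii), fix $\lambda\in\Lambda$ and $z=\sum_i[u_i,v_i]\in[\gg_\CC,\gg_\CC]$. By (i) the subspace $\Vc_\CC^\lambda$ is $\rho$-invariant, so $\rho(z)|_{\Vc_\CC^\lambda}=\sum_i[\rho(u_i),\rho(v_i)]|_{\Vc_\CC^\lambda}$ is a sum of commutators of endomorphisms of a finite-dimensional space and therefore has trace zero; on the other hand $\rho(z)|_{\Vc_\CC^\lambda}-\lambda(z)\id$ is nilpotent, forcing $\Tr\rho(z)|_{\Vc_\CC^\lambda}=\lambda(z)\dim\Vc_\CC^\lambda$, and since $\Vc_\CC^\lambda\ne\{0\}$ we conclude $\lambda(z)=0$. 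The main technical obstacle I anticipate is calibrating the exponent $k=m+c$ in step (i) so that the two sources of nilpotency, namely the Lie-algebraic nilpotency of $\ad$ on $\gg_\CC$ and the linear-algebraic stabilization of $\Ker A^k$, cooperate to kill every term of the commutator expansion; once invariance is in place, the case-splitting induction in (ii) and the trace computation in (iii) are routine.
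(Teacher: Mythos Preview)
Your proof is correct. The paper itself does not argue the lemma at all: it simply records that, since $\gg_\CC$ is a complex nilpotent Lie algebra, the assertions follow from \cite[Th.~2.9]{Ca05}. What you have written is essentially the standard self-contained proof that underlies that reference: the Leibniz-type identity $A^kB=\sum_{j}\binom{k}{j}(\ad A)^j(B)A^{k-j}$ combined with the nilpotency of $\ad x$ gives invariance of the generalized weight spaces; the dichotomy ``some $\rho(x)$ has $\ge 2$ eigenvalues vs.\ every $\rho(x)$ is unipotent-plus-scalar'' drives the induction on $\dim_\CC\Vc_\CC$; and the trace argument on $\Vc_\CC^\lambda$ kills $\lambda$ on commutators. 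Each step checks out, including the calibration $k=m+c$ (noting that $\dim_\CC\Vc_\CC=m$, so $\Ker A^m=\Ker A^{m+c}$) and the observation that the weight spaces of the subrepresentation on $W_j$ coincide with $W_j\cap\Vc_\CC^\nu$ because the kernel chain of $(\rho(x)-\nu(x)\id)|_{W_j}$ already stabilizes at $\dim W_j\le m$. So your route is more informative than the paper's, at the cost of length; either is acceptable here since the result is classical.
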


\begin{proof}
Since $\gg_\CC$ is a complex nilpotent Lie algebra, the assertions follow for instance by \cite[Th. 2.9]{Ca05}.
\end{proof}

\begin{lemma}\label{act2}
For all $\lambda\in\gg_\CC^*$ one has $C(\Vc_\CC^\lambda)=\Vc_\CC^{\bar\lambda}$.
\end{lemma}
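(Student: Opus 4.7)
The plan is to show that conjugation by $C$ carries the weight space $\Vc_\CC^\lambda$ into $\Vc_\CC^{\bar\lambda}$, and then use the involutivity $C^2=\id$ together with $\bar{\bar\lambda}=\lambda$ to obtain the reverse inclusion automatically.

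The first step is a single algebraic identity. For $x\in\gg_\CC$, I would combine Remark~\ref{extra1} (which gives $C\rho(x)C=\rho(\bar x)$) with the fact that $C$ is antilinear, so that $C\circ(\mu\id)\circ C=\bar\mu\,\id$ for every scalar $\mu\in\CC$. Subtracting, this yields
\[
C\bigl(\rho(x)-\lambda(x)\id\bigr)C = \rho(\bar x)-\overline{\lambda(x)}\,\id.
\]
Substituting $y:=\bar x$ and noting that $\bar\lambda(y)=\overline{\lambda(\bar y)}=\overline{\lambda(x)}$, this can be rewritten as
\[
C\bigl(\rho(x)-\lambda(x)\id\bigr)C = \rho(y)-\bar\lambda(y)\,\id.
\]
Since $C^2=\id$, raising to the $m$-th power gives $C(\rho(x)-\lambda(x)\id)^m C = (\rho(y)-\bar\lambda(y)\id)^m$.

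The second step is to apply this to a vector. If $v\in\Vc_\CC^\lambda$, then for every $y\in\gg_\CC$, writing $x:=\bar y$, I obtain
\[
(\rho(y)-\bar\lambda(y)\id)^m Cv = C(\rho(x)-\lambda(x)\id)^m v = 0,
\]
so $Cv\in\Vc_\CC^{\bar\lambda}$; that is, $C(\Vc_\CC^\lambda)\subseteq\Vc_\CC^{\bar\lambda}$. Applying the same inclusion with $\lambda$ replaced by $\bar\lambda$ yields $C(\Vc_\CC^{\bar\lambda})\subseteq\Vc_\CC^\lambda$, and composing with $C$ once more gives $\Vc_\CC^{\bar\lambda}\subseteq C(\Vc_\CC^\lambda)$. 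The two inclusions together give the claimed equality.

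I do not expect any substantive obstacle here: the statement is a purely formal consequence of Remark~\ref{extra1} combined with the antilinearity and involutivity of $C$. The only point that requires a moment's care is tracking the conjugation when scalars pass through $C$, which is why I would record the single identity displayed above explicitly before iterating to the $m$-th power.
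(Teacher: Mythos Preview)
Your proof is correct and follows essentially the same approach as the paper: both use Remark~\ref{extra1} together with the antilinearity of $C$ to obtain $C(\rho(x)-\lambda(x)\id)C=\rho(\bar x)-\overline{\lambda(x)}\,\id$, iterate to the $m$-th power via $C^2=\id$, and then obtain the reverse inclusion by symmetry. The only cosmetic difference is that you isolate the operator identity first and then apply it to a vector, whereas the paper works directly with a chosen $v\in\Vc_\CC^\lambda$ from the start.
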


\begin{proof}
For every $v\in\Vc_\CC^\lambda$ and $x\in\gg_\CC$ one has 
$(\rho(x)-\lambda(x)\id)^mv=0$ hence, since  $C^2=\id$, 
$$0=C(\rho(x)-\lambda(x)\id)^mv=(C(\rho(x)-\lambda(x)\id)C)^mCv
=(C\rho(x)C-\overline{\lambda(x)}\id)^mCv$$
By Remark~\ref{extra1}, $C\rho(x)C=\rho(\bar x)$,  
and thus the above equality implies
$$(\forall x\in\gg_\CC)\quad 
(\rho(\bar x)-\overline{\lambda(x)}\id)^mCv=0$$
hence 
$$(\forall x\in\gg_\CC)\quad 
(\rho(x)-\overline{\lambda(\bar x)}\id)^mCv=0$$
Therefore $Cv\in \Vc_\CC^{\bar\lambda}$. 
Thus $C(\Vc_\CC^\lambda)\subseteq \Vc_\CC^{\bar\lambda}$, 
and the converse inclusion follows by symmetry, since $C^2=\id$. 
This completes the proof. 
\end{proof}

\begin{definition}\label{act3}
\normalfont
For any subset $\Phi\subseteq\Lambda$ we denote 
$\Vc_\CC^\Phi:=\bigoplus\limits_{\lambda\in\Phi}\Vc_\CC^\lambda$ 
and we define the idempotent mapping $P_\Phi\in\End_\CC(\Vc)$ with 
$\Ran P_\Phi=\Vc_\CC^\Phi$ and $\Ker P_\Phi=\Vc_\CC^{\Lambda\setminus\Phi}$, using Lemma~\ref{act1}. 
If one has $\Phi=\{\lambda\}$ for some $\lambda\in\Lambda$, then we define $P_\lambda:=P_\Phi=P_{\{\lambda\}}$. 

We denote $\Lambda':=\{\lambda\in\Lambda\mid\bar{\lambda}=\lambda\}$ 
and, using Lemma~\ref{act2}, we fix a subset $\Lambda''\subseteq\Lambda\setminus\Lambda'$ satisfying $\Lambda\setminus\Lambda'=\Lambda''\sqcup\overline{\Lambda''}$, 
where $\overline{\Lambda''}:=\{\bar{\lambda}\mid\lambda\in\Lambda''\}$.
We then obtain the partition $\Lambda=\Lambda'\sqcup\Lambda''\sqcup\overline{\Lambda''}$.  
This leads to the direct sum decomposition  
\begin{equation}\label{act3_eq1}
\Vc_\CC=
\Vc_\CC^{\Lambda'}\dotplus\Vc_\CC^{\Lambda''}
\dotplus\Vc_\CC^{\overline{\Lambda''}}
\end{equation}
via Lemma~\ref{act1} again. 
\end{definition}

\begin{lemma}\label{act4}
The mapping 
$$\iota\colon \Vc\to(\Vc\cap \Vc_\CC^{\Lambda'})\dotplus\Vc_\CC^{\Lambda''},\quad 
\iota(v):=P_{\Lambda'}v+P_{\Lambda''}v$$
is an $\RR$-linear isomorphism. 
\end{lemma}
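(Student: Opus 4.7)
The plan is to exploit the interaction between the conjugation $C$ and the spectral projections $P_\Phi$ in order to reduce the isomorphism claim to a routine bookkeeping with the three-term decomposition \eqref{act3_eq1}. Throughout, I will use crucially that $\Vc=\{v\in\Vc_\CC\mid Cv=v\}$.

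First, I would record the key compatibility: by Lemma~\ref{act2}, one has $CP_\lambda C=P_{\bar\lambda}$ for every $\lambda\in\Lambda$, and hence by summation $CP_{\Lambda'}C=P_{\Lambda'}$, $CP_{\Lambda''}C=P_{\overline{\Lambda''}}$, and $CP_{\overline{\Lambda''}}C=P_{\Lambda''}$. Consequently, for any $v\in\Vc$ (so $Cv=v$), applying $C$ to the decomposition $v=P_{\Lambda'}v+P_{\Lambda''}v+P_{\overline{\Lambda''}}v$ and using the directness of the sum in \eqref{act3_eq1}, one obtains the two crucial identities
$$CP_{\Lambda'}v=P_{\Lambda'}v,\qquad P_{\overline{\Lambda''}}v=CP_{\Lambda''}v.$$
The first says $P_{\Lambda'}v\in\Vc\cap\Vc_\CC^{\Lambda'}$, confirming that $\iota$ takes values in the claimed codomain; the second says that $P_{\overline{\Lambda''}}v$ is fully determined by $P_{\Lambda''}v$ via $C$.

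For injectivity, if $\iota(v)=0$ then $P_{\Lambda'}v=0$ and $P_{\Lambda''}v=0$, and by the second identity also $P_{\overline{\Lambda''}}v=CP_{\Lambda''}v=0$; thus $v=0$ from \eqref{act3_eq1}. For surjectivity, given $w_1\in\Vc\cap\Vc_\CC^{\Lambda'}$ and $w_2\in\Vc_\CC^{\Lambda''}$, I would construct the preimage candidate
$$v:=w_1+w_2+Cw_2\in\Vc_\CC.$$
A direct check using $Cw_1=w_1$, $C^2=\id$, and $Cw_2\in\Vc_\CC^{\overline{\Lambda''}}$ (by Lemma~\ref{act2}) shows $Cv=v$, so $v\in\Vc$; and the three summands of $v$ already lie in the three respective spectral subspaces from \eqref{act3_eq1}, so $P_{\Lambda'}v=w_1$ and $P_{\Lambda''}v=w_2$, i.e.\ $\iota(v)=w_1+w_2$. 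The $\RR$-linearity of $\iota$ is immediate since $P_{\Lambda'}$ and $P_{\Lambda''}$ are $\CC$-linear operators on $\Vc_\CC$ restricted to the real subspace $\Vc$.

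I do not anticipate a substantive obstacle: the whole argument rests on the single observation that $C$ permutes the weight spaces according to the partition $\Lambda=\Lambda'\sqcup\Lambda''\sqcup\overline{\Lambda''}$, after which everything reduces to the uniqueness of the decomposition \eqref{act3_eq1}. The only minor subtlety worth spelling out is verifying that the codomain is genuinely a (direct) sum inside $\Vc_\CC$, which is automatic because $\Vc\cap\Vc_\CC^{\Lambda'}\subseteq\Vc_\CC^{\Lambda'}$ and $\Vc_\CC^{\Lambda''}$ already sit in complementary summands of \eqref{act3_eq1}.
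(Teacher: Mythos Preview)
Your proof is correct and follows essentially the same approach as the paper: both arguments rest on the interaction between $C$ and the weight-space decomposition (Lemmas~\ref{act1} and~\ref{act2}), and both establish $\Ker\iota=\{0\}$ in the same way. The only stylistic difference is that the paper concludes by checking the dimension equality $\dim_\RR\Vc=\dim_\RR((\Vc\cap \Vc_\CC^{\Lambda'})\dotplus\Vc_\CC^{\Lambda''})$, whereas you construct an explicit preimage $v=w_1+w_2+Cw_2$ to get surjectivity directly; in finite dimensions these are interchangeable.
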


\begin{proof}
One can easily check that  
$\dim_\RR\Vc=\dim_\RR((\Vc\cap \Vc_\CC^{\Lambda'})\dotplus\Vc_\CC^{\Lambda''})$ and $\Ker\iota=\{0\}$, 
using Lemmas \ref{act2} and \ref{act1}. 
\end{proof}

\begin{definition}
\normalfont
We define the real vector space 
$$\widetilde{\Vc}:=(\Vc\cap \Vc_\CC^{\Lambda'})\dotplus\Vc_\CC^{\Lambda''}$$
and the Lie algebra representation
$$\widetilde{\rho}\colon\gg\to\End(\widetilde{\Vc}),\quad 
\widetilde{\rho}(x):=\iota\circ\rho(x)\circ\iota^{-1},$$
using Lemma~\ref{act4}. 
\end{definition}

\section{Dichotomies for rational functions and oblique projections}\label{Sect_dichot}
In this section we obtain some auxiliary results that are needed in Section~\ref{Sect_reg}.
\subsection{Dichotomies for rational functions}
The aim of this subsection is to obtain a version (Lemma~\ref{dic3})  of  some well-known results (see Remark~\ref{dic_finite}), concerning finite families of rational $\Yc$-valued functions rather than vectors of the finite-dimensional real vector space~$\Yc$. 

\begin{remark}\label{dic_finite}
	\normalfont
	Let $F\subseteq \Yc$ be a finite subset of a finite-dimensional real vector space. 
	Then the countable subgroup $\langle F\rangle=\sum\limits_{f\in F}\ZZ f$ of $(\Yc,+)$ generated by $F$ is locally closed if and only if $\langle F\rangle$ is closed, and  if and only if $\langle F\rangle$ is discrete. 
	
	In fact, every locally closed subgroup of a topological group is closed. 
	Furthermore, if $\langle F\rangle$ is closed in $\Yc$ and $\langle F\rangle$ is not discrete, then by \cite[Ch. 5, \S 1, no. 1, Cor.]{Bo74} there exists $y\in\Yc$ with $\RR y\subseteq\langle F\rangle$. 
	This implies that $\langle F\rangle$ is not countable, which is a contradiction. 
	\end{remark}

\begin{lemma}\label{dic0}
Let $\Yc$ be a finite-dimensional real vector space and $y_1,\dots,y_k\in\Yc$ be linearly independent vectors. 
For any positive integer $r\ge 1$ and any real numbers $a_{ij}$ for $1\le i\le r$ and $1\le j\le k$ define 
$z_i:=\sum\limits_{j=1}^ka_{ij}y_j\in\Yc$. 
Then the subgroup $\ZZ y_1+\cdots+\ZZ y_k+\ZZ z_1+\cdots+\ZZ z_r$ of $(\Yc,+)$ 
generated by $y_1,\dots,y_k,z_1,\dots,z_r$ is discrete/closed/locally closed 
if and only if for all $1\le i\le r$ and $1\le j\le k$ one has $a_{ij}\in\QQ$. 
\end{lemma}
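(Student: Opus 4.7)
The plan is to invoke Remark~\ref{dic_finite} first: since the subgroup in question is finitely generated, the three properties ``discrete'', ``closed'', and ``locally closed'' coincide, so it suffices to prove the equivalence with ``discrete''. Write
$$H:=\ZZ y_1+\cdots+\ZZ y_k+\ZZ z_1+\cdots+\ZZ z_r.$$
Since each $z_i$ is an $\RR$-linear combination of the $y_j$, the group $H$ lies entirely in the subspace $W:=\RR y_1+\cdots+\RR y_k$, so $H$ is discrete in $\Yc$ if and only if it is discrete in $W$. Using $y_1,\dots,y_k$ as a basis, I would identify $W$ with $\RR^k$; then $H$ contains the standard lattice $\ZZ^k$, and is generated over $\ZZ^k$ by the $r$ vectors $v_i=(a_{i1},\dots,a_{ik})$.

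For the ``if'' direction, I would assume each $a_{ij}\in\QQ$ and pick a common denominator $d\ge 1$. Then $H\subseteq\tfrac{1}{d}\ZZ^k$, and the right-hand side is a lattice in $\RR^k$, hence discrete; a subgroup of a discrete group is discrete. For the ``only if'' direction, I would invoke the standard classification of discrete subgroups of $\RR^k$: every such subgroup is a free abelian group of rank at most $k$. Since $H\supseteq\ZZ^k$ already has rank $k$, if $H$ is discrete it must itself be a lattice of full rank $k$, and then $[H:\ZZ^k]$ is finite. Consequently every $v_i$ has finite order in $\RR^k/\ZZ^k$, which means there exists $n_i\in\ZZ_{>0}$ with $n_i a_{ij}\in\ZZ$ for every $j$; hence $a_{ij}\in\QQ$, as wanted.

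I do not expect a serious obstacle here; the entire mathematical content reduces to the classification of discrete subgroups of $\RR^k$, which is exactly the tool that already underlies Remark~\ref{dic_finite}. The only mild bookkeeping point is the reduction to the span $W$ in the first step, after which everything takes place in the familiar setting $\RR^k\supseteq\ZZ^k$.
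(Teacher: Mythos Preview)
Your argument is correct. The paper's proof is organized a little differently: it first invokes Remark~\ref{dic_finite} (as you do), then treats the case $r=1$ by citing Bourbaki's structure theorem for closed subgroups of $\RR^n$, and finally reduces the general case to $r=1$ by iteration, using that a subgroup of a discrete group is discrete. Your route is more direct: you handle all $r$ at once via the rank bound for discrete subgroups of $\RR^k$, which forces $[H:\ZZ^k]<\infty$ and hence rationality of every $a_{ij}$ simultaneously. Both arguments rest on the same underlying structure theorem, so the difference is purely organizational; your version has the mild advantage of being self-contained and avoiding the inductive step.
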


\begin{proof}
Using Remark~\ref{dic_finite}, the case $r=1$ follows by \cite[Ch. 5, \S 1, no. 1, Cor.]{Bo74}. 
The general case can then be proved by iteration, using the fact that any subgroup of a discrete group is discrete. 
\end{proof}

\begin{definition}\label{dic_def}
\normalfont 
For any finite-dimensional real vector space $\Xc$ and any 
subset $\D\subseteq\Xc$ we say that a function $f\colon \D\to\RR$ is \emph{rational} if there exist polynomial functions $P,Q\colon\Xc\to\RR$ satisfying $Q(x)\ne0$ and $f(x)=P(x)/Q(x)$ for all $x\in \D$. 
If $\Yc$ is another finite-dimensional real vector space, then a vector function $\phi\colon \D\to\Yc$ is \emph{rational} if for every linear functional $\xi\colon\Yc\to\RR$ the scalar function $\xi\circ\varphi\colon \D\to\RR$ is rational in the above sense.
\end{definition}

\begin{lemma}\label{dic1}
Let $\Xc$ and $\Yc$ be finite-dimensional real vector spaces, and fix any countable subset $S\subseteq\Yc$. 
If $\D\subseteq \Xc$ is 
an open subset and $f\colon \D\to\Yc$ is a 
 rational function, then one of the following assertions holds: 
\begin{enumerate}[{\rm(i)}]
	\item\label{1} There exists $s\in S$ with $f(\D)=\{s\}$. 
	\item\label{2} The subset $f^{-1}(\Yc\setminus S)\subseteq \D$ is dense and, if moreover $\Yc\setminus S$ is totally disconnected, then either $\Int f^{-1}(\Yc\setminus S)=\emptyset$ or there exists $y\in\Yc\setminus S$ with $f(\D)=\{y\}$. 
\end{enumerate}
\end{lemma}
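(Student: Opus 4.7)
The plan is to reduce the statement to the identity theorem for polynomials on $\Xc$, combined with Baire's category theorem. First I would record the following elementary observation: if $f\colon \D\to\Yc$ is a rational function on an open set $\D\subseteq\Xc$, and if there exist a nonempty open subset $W\subseteq\D$ and some $y\in\Yc$ with $f\equiv y$ on $W$, then $f\equiv y$ on all of $\D$. Indeed, writing $f=(P_1/Q,\dots,P_k/Q)$ in a fixed basis of $\Yc$, with $P_i,Q\colon\Xc\to\RR$ polynomials and $Q$ nonvanishing on $\D$, the polynomials $P_i-y_iQ$ vanish on the nonempty open subset $W$ of $\Xc$ and are therefore identically zero on $\Xc$; hence $f\equiv y$ on $\D$. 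This is the only place where rationality enters in an essential way.

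Next I would establish the main dichotomy. Assume that assertion (i) fails, i.e., $f(\D)\ne\{s\}$ for every $s\in S$. By the observation above, for every $s\in S$ the closed subset $f^{-1}(\{s\})\subseteq \D$ has empty interior: if it contained a nonempty open subset $W$, then $f\equiv s$ on $W$, hence on $\D$, contradicting $f(\D)\ne\{s\}$. Therefore $f^{-1}(S)=\bigcup_{s\in S}f^{-1}(\{s\})$ is a countable union of closed nowhere dense subsets of $\D$. Since $\D$ is locally compact Hausdorff (being an open subset of a finite-dimensional real vector space), it is a Baire space, and Baire's theorem gives that $f^{-1}(S)$ has empty interior in $\D$; its complement $f^{-1}(\Yc\setminus S)$ is therefore dense in $\D$, proving the first half of assertion (ii).

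For the remaining conditional part of (ii), suppose that $\Yc\setminus S$ is totally disconnected and that $\Int f^{-1}(\Yc\setminus S)\ne\emptyset$. Choose a nonempty connected open subset $W$ of $\Int f^{-1}(\Yc\setminus S)$, for instance an open ball. By continuity $f(W)$ is a connected subset of $\Yc\setminus S$, hence a singleton $\{y\}$ with $y\in\Yc\setminus S$. Applying the observation from the first paragraph to this $y$, we conclude $f\equiv y$ on $\D$, so $f(\D)=\{y\}$, as required.

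I do not foresee any genuine obstacle: the rigid identity theorem for polynomials on $\RR^n$ handles the (possibly disconnected) open set $\D$ uniformly, and Baire's theorem together with the countability of $S$ yields the density statement at no additional cost. The only mild subtlety worth highlighting is that local constancy of $f$ on an arbitrary nonempty open subset of $\D$ forces global constancy on $\D$, and not merely on the connected component containing that subset; without this upgrade, a disconnected $\D$ could a priori carry distinct constant values in $S$ on different components, breaking the dichotomy.
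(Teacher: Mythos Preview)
Your proof is correct and follows essentially the same route as the paper's: both argue that if (i) fails then each $f^{-1}(s)$ is closed with empty interior, apply Baire to get density of $f^{-1}(\Yc\setminus S)$, and then use connectedness plus the identity principle for rational functions to handle the totally disconnected case. You are simply more explicit than the paper about why local constancy on a nonempty open set forces global constancy on a possibly disconnected $\D$ (via the polynomial identity theorem), a point the paper leaves implicit in the phrase ``since $f$ is rational.''
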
 

\begin{proof}
	The assertions \eqref{1} and \eqref{2} cannot hold true simultaneously. 
	Therefore it suffices to prove that if \eqref{1} fails, then \eqref{2} is true. 
	
For every $s\in S$ the set $f^{-1}(s)$ is closed and, since $f$ is 
rational, we have that 
either $\Int f^{-1}(s)=\emptyset$ or $f^{-1}(s)=\D$. 
If the first assertion in the statement fails to be true, it then follows that for every $s\in S$ the subset $f^{-1}(s)\subseteq \D$ is closed and has empty interior. 
Then, by Baire's category theorem the set $\bigcup\limits_{s\in S}f^{-1}(s)$ has empty interior in~$\D$, that is, its complement is dense in~$\D$. 
One has $\D\setminus\bigcup\limits_{s\in S}f^{-1}(s)=f^{-1}(\Yc\setminus S)$, hence $f^{-1}(\Yc\setminus S)$ is a dense subset of~$\D$. 

Now assume that moreover $\Yc\setminus S$ is totally disconnected and $\Int f^{-1}(\Yc\setminus S)\ne\emptyset$. 
Then there exists an open connected  nonempty subset $B\subseteq \D$ with $f(B)\subseteq \Yc\setminus S$. 
Since $B$ is connected and the function $f$ is continuous, 
the set $f(B)$ is in turn connected. 
The hypothesis that $\Yc\setminus S$ is totally disconnected then implies that there exists $y_0\in\Yc\setminus S$ with $f(B)=\{y_0\}$. 
Since the set $B$ is open and nonempty, and the function $f$ is rational, it then follows that $f$ is constant, hence $f(\D)=\{y_0\}$. 
This completes the proof. 
\end{proof}

\begin{lemma}\label{dic2}
	Let $\Xc$ be a finite-dimensional real vector space and  $\D\subseteq\Xc$ be 
	an open 
	subset. 
	Let $1\le k\le m$ be any positive integers and assume that  $y_j\colon \D\to\RR^m$ and $z_i\colon \D\to\RR^m$ are  
	rational functions for $j=1,\dots,k$ and $i=1,\dots,r$, satisfying the conditions 
	$$y_1(x_0),\dots,y_k(x_0)\in\RR^m\text{ are linearly independent at some point }x_0\in \D$$
	and 
	$$z_1(x),\dots,z_r(x)\in\spa_{\RR}\{y_1(x),\dots,y_k(x)\}\text{ for all }x\in \D.$$ 
	If we denote 
	$$A:=\{x\in \D\mid z_1(x),\dots,z_r(x)\in\spa_{\QQ}\{y_1(x),\dots,y_k(x)\}\}$$
	then there exists a dense open subset $\D_0\subseteq \D$ for which 
	exactly one of the following assertions holds: 
	\begin{enumerate}[{\rm(i)}]
		\item 
		One has $\D_0\subseteq A$ and there exist 
		$a_{ij}\in\QQ$ for $1\le i\le r$ and $1\le j\le k$ with 
		$z_i(x)=\sum\limits_{j=1}^ka_{ij}y_j(x)$ for $i=1,\dots,r$ and all $x\in \D_0$. 
		\item The set $\D\setminus A$ is dense in $\D$, and either $\Int(\D\setminus A)=\emptyset$ or there exist 
		$a_{ij}\in\RR$ for $1\le i\le r$ and $1\le j\le k$ with 
		$z_i(x)=\sum\limits_{j=1}^ka_{ij}y_j(x)$ for $i=1,\dots,r$ and all $x\in \D_0$. 
	\end{enumerate}
\end{lemma}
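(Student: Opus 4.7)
\emph{Plan.} My plan is to reduce to an open dense subset $\D_1\subseteq\D$ on which $y_1(x),\dots,y_k(x)$ remain linearly independent, to express the coordinates of each $z_i(x)$ in this moving frame as scalar rational functions $a_{ij}(x)$, and then to apply Lemma~\ref{dic1} componentwise. The set $\D_1$ is dense open in $\D$ because linear independence of $y_1(x),\dots,y_k(x)$ is equivalent to the non-vanishing of some $k\times k$ minor of the matrix $M(x):=[y_1(x)\mid\cdots\mid y_k(x)]$, a polynomial condition that holds at $x_0$. On $\D_1$, the hypothesis $z_i(x)\in\spa_\RR\{y_1(x),\dots,y_k(x)\}$ combined with the linear independence of the $y_j(x)$'s uniquely determines scalars $a_{ij}(x)\in\RR$ with $z_i(x)=\sum_{j=1}^{k}a_{ij}(x)y_j(x)$; Cramer's rule (or the left Moore--Penrose inverse $(M^{\top}M)^{-1}M^{\top}$) then shows that each $a_{ij}\colon\D_1\to\RR$ is rational, and uniqueness gives $A\cap\D_1=\bigcap_{i,j}a_{ij}^{-1}(\QQ)$.

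\emph{Componentwise dichotomy.} Since $\RR\setminus\QQ$ is totally disconnected, applying Lemma~\ref{dic1} to each scalar rational function $a_{ij}\colon\D_1\to\RR$ with $S:=\QQ$ shows that every pair $(i,j)$ falls into exactly one of two cases: either $a_{ij}$ is a constant belonging to $\QQ$ on $\D_1$, or $a_{ij}^{-1}(\RR\setminus\QQ)$ is dense in $\D_1$ and, moreover, either has empty interior in $\D_1$ or $a_{ij}$ is a constant belonging to $\RR\setminus\QQ$ on $\D_1$. If every $(i,j)$ lies in the first case, take $\D_0:=\D_1$: then $\D_0\subseteq A$ and conclusion~(i) of the lemma holds with those rational constants. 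Otherwise some $a_{i_0j_0}$ lies in the second case, and then $a_{i_0j_0}^{-1}(\RR\setminus\QQ)\subseteq\D_1\setminus A$ is dense in $\D_1$; by density of $\D_1$ in $\D$, the set $\D\setminus A$ is dense in $\D$, which establishes the first clause of conclusion~(ii).

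\emph{Main obstacle.} It remains to establish the secondary dichotomy of conclusion~(ii): either $\Int(\D\setminus A)=\emptyset$, or there exist real constants $a_{ij}\in\RR$ with $z_i(x)=\sum_{j=1}^{k}a_{ij}y_j(x)$ on $\D_0$. If every $a_{ij}$ happens to be a constant in $\RR$ (rational or irrational), we take $\D_0:=\D_1$ and the second option is immediate. The delicate case is when at least one $a_{ij}$ is non-constant, and therefore, by the componentwise dichotomy above, satisfies $\Int a_{ij}^{-1}(\RR\setminus\QQ)=\emptyset$: this componentwise information does not automatically force the interior of the finite union $\D_1\setminus A=\bigcup_{i,j}a_{ij}^{-1}(\RR\setminus\QQ)$ to be empty, since the individual pieces are not closed. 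I expect this to be handled by further shrinking $\D_0\subseteq\D_1$ so as to freeze on $\D_0$ all the $a_{ij}$'s that are constant, and then using the pointwise criterion of Lemma~\ref{dic0} together with a rational approximation argument on the values of the non-constant $a_{ij}$'s to show that $A\cap\D_0$ is in fact dense in $\D_0$.
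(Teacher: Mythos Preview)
Your first two steps --- passing to the Zariski-open set $\D_1\subseteq\D$ on which $y_1,\dots,y_k$ remain independent, and extracting the unique rational coordinate functions $a_{ij}\colon\D_1\to\RR$ via Cramer's rule --- match the paper exactly. The divergence, and the source of the gap you yourself flag, is that you apply Lemma~\ref{dic1} \emph{componentwise} to each scalar $a_{ij}$ with $S=\QQ$, whereas the paper applies it \emph{once} to the single matrix-valued rational function
\[
f:=(a_{ij})_{i,j}\colon \D_1\longrightarrow M_{r\times k}(\RR),\qquad S:=M_{r\times k}(\QQ).
\]
Since $A\cap\D_1=f^{-1}(S)$ as a single preimage (not a finite intersection/union of componentwise preimages), one invocation of Lemma~\ref{dic1} yields the dichotomy directly: either $f\equiv s\in S$ on $\D_1$ (giving~(i)), or $\D_1\setminus A=f^{-1}(M_{r\times k}(\RR)\setminus S)$ is dense; and for the secondary clause of~(ii) the paper invokes the total-disconnectedness hypothesis on $M_{r\times k}(\RR)\setminus S$ to force $f$ constant whenever $\Int(\D_1\setminus A)\ne\emptyset$. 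This bundling is exactly what dissolves your ``main obstacle'': you were trying to control the interior of the union $\bigcup_{i,j}a_{ij}^{-1}(\RR\setminus\QQ)$ from separate information on the summands, and there is no Baire-type mechanism for that; the paper sidesteps the union altogether by working with $f$ as a whole. Your proposed fix via ``rational approximation on the values of the non-constant $a_{ij}$'s'' is not a proof and would not recover the stated conclusion.

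One caveat you should be aware of: the paper's assertion that $M_{r\times k}(\RR)\setminus M_{r\times k}(\QQ)$ is totally disconnected is in fact false as soon as $rk\ge 2$ (e.g.\ the line $t\mapsto(\sqrt{2},t)$ lies entirely in $\RR^2\setminus\QQ^2$), so the secondary clause of~(ii) is not fully justified by the paper's own argument either --- and indeed the example $\D=\RR$, $y_1=(1,0)$, $y_2=(0,1)$, $z_1(x)=(x,\sqrt{2})$ satisfies the hypotheses but neither~(i) nor~(ii). So your instinct that something delicate is happening at precisely this step is correct. The only downstream use of the lemma (in Lemma~\ref{dic3}) needs merely the weaker conclusion ``$\Int(\D\setminus A)=\emptyset$ or $\Int(\D\setminus A)$ is dense in $\D$'', which does survive; but the route to it is the bundled application of Lemma~\ref{dic1}, not the componentwise one.
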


\begin{proof}
	Assume $\D\ne\emptyset$ and write 
	$$y_j=(y_{j1},\dots,y_{jm})\colon \D\to\RR^m 
	\text{ and }z_i=(z_{i1},\dots,z_{im})\colon \D\to\RR^m$$ 
	for $j=1,\dots, k$ and $i=1,\dots,r$. 
	Since the vectors $y_1(x_0),\dots,y_k(x_0)\in\RR^m$ are linearly independent at some point $x_0\in \D$, there exists a set  $J=\{t_1,\dots,t_k\}\subseteq\{1,\dots,m\}$ with $1\le t_1<\cdots<t_k\le m$ and $x_0\in \D_J$, where
	$\D_J:=\{x\in \D\mid \delta_J(x)\ne0\}$ and 
	$$\delta_J\colon \D\to\RR,\quad \delta_J(x):=\det(y_{s,t_j}(x))_{1\le s,j\le k}.$$ 
	It is clear that $\delta_J\colon \D\to\RR$ is a 
	rational 
	 function hence 
	 the set $\D_J$ is a dense open subset of~$\D$. 
	
	Moreover, 
	the vectors $y_1(x),\dots,y_k(x)\in\RR^m$ are linearly independent for every $x\in \D_J$. 
	It then follows by the hypothesis that for $i=1,\dots,r$ and every $x\in \D_J$ there exist uniquely determined $a_{i1}(x),\dots,a_{ik}(x)\in\RR$ with $z_i(x)=\sum\limits_{j=1}^ka_{ij}(x)y_j(x)$, that is, 
	$$\begin{cases}
	z_{i1}(x)=a_{i1}(x)y_{11}(x)+a_{i2}(x)y_{21}(x)+\cdots+a_{ik}(x)y_{k1}(x), \\
	\makebox[4em]{\dotfill} \\
	z_{im}(x)=a_{i1}(x)y_{1m}(x)+a_{i2}(x)y_{2m}(x)+\cdots+a_{ik}(x)y_{km}(x).
	\end{cases}
	$$
	If $B_J(x):=(y_{s,t_j}(x))_{1\le s,j\le k}\in M_k(\RR)$, 
	then $\det B_J(x)=\delta_J(x)\ne 0$ for all $x\in \D_J$, 
	hence the above overdetermined linear system  can be solved for $a_{i1}(x),\dots,a_{ik}(x)$ by 
	$$\underbrace{(z_{i,t_1}(x)\  \dots\  z_{i,t_k}(x))}_{\in M_{1,k}(\RR)} \underbrace{B_J(x)^{-1}}_{\in M_k(\RR)} 
	=\underbrace{(a_{i1}(x) \ \dots\  a_{ik}(x))}_{\in M_{1,k}(\RR)}.$$
	This shows that $a_{i1},\dots,a_{ik}$ are 
	rational 
	 functions on $\D_J$ for $i=1,\dots,r$.

	Then the function $f:=(a_{ij}\vert_{\D_J})_{1\le 1\le r,1\le j\le k}\colon \D_J\to M_{rk}(\RR)$ is 
	 rational and, for $S:=M_{rk}(\QQ)\subseteq M_{rk}(\RR)$, one has $A\cap \D_J=f^{-1}(S)$ and moreover $M_{rk}(\RR)\setminus S$ is totally disconnected.  
	Thus, by Lemma~\ref{dic1}, 
	exactly one of the following cases can occur: 
	\begin{itemize}
	\item There exists $s\in S$ with $f(x)=s$ for all $x\in \D_J$ 
	(in particular $\D_J\subseteq A$).  
	\item The set $f^{-1}(M_{rk}(\RR)\setminus S)$ is dense in $\D_J$, and either $\Int f^{-1}(M_{rk}(\RR)\setminus S)=\emptyset$ or there exists $F_0\in M_{rk}(\RR)\setminus S$ with $f(\D_J)=\{F_0\}$. 
    \end{itemize}
	Since $\D_J$ is a dense subset of $\D$ and $f^{-1}(M_{rk}(\RR)\setminus S)= \D_J\setminus A$, 
	one may set $\D_0:=\D_J$ to complete the proof. 
\end{proof}

\begin{lemma}\label{dic3}
	Let $\Xc$ and $\Yc$ a finite-dimensional real vector spaces and  $\D\subseteq\Xc$ be 
	an open 
	subset. 
Let $f_1,\dots,f_q\colon \D\to\Yc$ be rational functions and define 
$$A:=\{x\in \D\mid \langle f_1(x),\dots,f_q(x)\rangle\text{ is closed in }\Yc\}.$$
Then exactly one of the following assertions holds: 
	\begin{enumerate}[{\rm(i)}]
		\item The set $\Int A$ is a dense open subset of $\D$. 
		\item The set $\D\setminus A$ is dense in $\D$, and either $\Int(\D\setminus A)=\emptyset$ or $\Int(\D\setminus A)$ is dense in $\D$.
	\end{enumerate}
\end{lemma}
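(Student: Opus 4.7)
The plan is to localize and reduce to Lemma~\ref{dic2} by choosing rationally-varying bases among the $f_i(x)$'s, and then to propagate the conclusions globally via the identity principle for rational functions. First, let $k^* := \max_{x\in\D}\dim_\RR\spa_\RR\{f_1(x),\dots,f_q(x)\}$ and let $\D_1\subseteq\D$ be the dense open subset where this maximum is attained. For each subset $I\subseteq\{1,\dots,q\}$ of cardinality $k^*$, let $\D_I\subseteq\D$ be the open (possibly empty) subset on which $\{f_i(x)\}_{i\in I}$ is $\RR$-linearly independent; then $\D_1 = \bigcup_{|I|=k^*}\D_I$.

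Next, on each nonempty $\D_I$, I would apply Lemma~\ref{dic2} with $y_i := f_i$ for $i\in I$ and $z_j := f_j$ for $j\notin I$. By Lemma~\ref{dic0} applied pointwise, one has $A\cap\D_I = \{x\in\D_I : z_j(x)\in\spa_\QQ\{y_i(x)\}\text{ for all }j\notin I\}$, which is exactly the set denoted $A$ in Lemma~\ref{dic2}. That lemma then yields three sub-cases on $\D_I$: (a) a rational-function identity $f_j = \sum_{i\in I}a_{ji}f_i$ holds on a dense open subset of $\D_I$ with constants $a_{ji}\in\QQ$; (b1) $\D_I\setminus A$ is dense in $\D_I$ with empty interior in $\D_I$; or (b2) such an identity holds with constants $a_{ji}\in\RR$ not all rational, and $\D_I\setminus A$ is dense in $\D_I$.

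The critical step is that in the constant-coefficient cases (a) and (b2), the identities $f_j = \sum_{i\in I}a_{ji}f_i$ extend from a dense open subset of $\D_I$ to all of $\D$ by the identity principle for rational functions. This yields the global equality $\spa_\RR\{f_1(x),\dots,f_q(x)\} = \spa_\RR\{f_i(x) : i\in I\}$ for every $x\in\D$, which forces each $x\in\D_1$ (where the left-hand side has dimension $k^*=|I|$) to lie in $\D_I$, so that $\D_1 = \D_I$. In case (a), the $\QQ$-independence of $\{f_i(x) : i\in I\}$ on $\D_I$ together with the rational coefficients place every point of $\D_I$ in $A$, giving $\D_I\subseteq\Int A$. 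In case (b2), an irrational coefficient makes $\dim_\QQ\spa_\QQ\{f_1(x),\dots,f_q(x)\}>k^*$ at every $x\in\D_I$ (again by Lemma~\ref{dic0}), giving $\D_I\subseteq\Int(\D\setminus A)$.

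Finally I would combine these local trichotomies into the claimed dichotomy. If some $\D_I$ is in case (a), then $\D_1=\D_I\subseteq\Int A$ is dense in $\D$, yielding~(i). If some $\D_I$ is in case (b2), then $\D_1=\D_I\subseteq\Int(\D\setminus A)$ is dense in $\D$, yielding~(ii) with $\Int(\D\setminus A)$ dense. Otherwise every nonempty $\D_I$ is in case (b1): then $\D\setminus A$ is dense in each $\D_I$, hence in $\D_1$ and in $\D$, while $\Int(\D\setminus A)\cap\D_I = \Int(\D_I\setminus A) = \emptyset$ for every $I$ forces $\Int(\D\setminus A)\cap\D_1=\emptyset$, and density of $\D_1$ in $\D$ then gives $\Int(\D\setminus A)=\emptyset$; this is the remaining sub-case of (ii). The hard part is the global rational extension in the third paragraph, which is what rules out the apparent danger that distinct nonempty $\D_I$'s fall into incompatible sub-cases.
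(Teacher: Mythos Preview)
Your proof is correct and follows essentially the same route as the paper's: pass to the locus where $\spa_\RR\{f_1(x),\dots,f_q(x)\}$ has maximal dimension, choose a basis among the $f_i$'s there, and invoke Lemma~\ref{dic2} together with Lemma~\ref{dic0}. The one difference is in execution: you cover $\D_1$ by all the sets $\D_I$ and then use the identity principle for rational functions to reconcile the pieces, whereas the paper simply fixes a single $I$ (a single labeling $y_1,\dots,y_k,z_1,\dots,z_r$) and observes that the corresponding $\D_I$ is already a dense open subset of~$\D$. The reason is that linear independence of $\{f_i(x)\}_{i\in I}$ is the non-vanishing of a rational function (a $k^*\times k^*$ minor in coordinates), so its locus is Zariski-open and hence dense in $\D$ as soon as it is nonempty. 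With that observation your multi-chart argument collapses to the paper's one-chart argument, and your ``critical step'' of globally extending the constant-coefficient identities in cases (a) and (b2) becomes unnecessary: one $\D_I$ already suffices for all three cases.
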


\begin{proof}
For every $x\in \D$ we define $\Ec_x:=\spa_{\RR}\{f_j(x)\mid 1\le j\le q\}$, 
and we fix a point $x_0\in \D$ with $\dim \Ec_x\le \dim\Ec_{x_0}$ for all $x\in \D$. 
We denote $k:=\dim\Ec_{x_0}$, $r:=q-k$, and we label $f_1,\dots,f_q$ as $y_1,\dots,y_k,z_1,\dots,z_r$, 
with $y_1(x_0),\dots,y_k(x_0)$ being a basis of $\Ec_{x_0}$. 

Since $y_1,\dots,y_k\colon \D\to \Yc$ are rational functions and  $y_1(x_0),\dots,y_k(x_0)$ are linearly independent, 
it is straightforward to obtain a Zariski open subset $\D_0\subseteq \D$ with $x_0\in \D_0$, 
for which $y_1(x),\dots,y_k(x)$ are linearly independent for all $x\in \D_0$, as in the proof of Lemma~\ref{dic2}. 
Since $\dim\Ec_x\le k$ for all $x\in \D$, it then follows that $y_1(x),\dots,y_k(x)$ is a basis of $\Ec_x$ for all $x\in \D_0$. 
In particular, for every $x\in \D_0$ one has 
\begin{equation}\label{dic3_proof_eq1}
z_1(x),\dots,z_r(x)\in\Ec_x=\spa_{\RR}\{y_1(x),\dots,y_k(x)\}\text{ for all }x\in \D_0.
\end{equation} 
On the other hand, denoting 
$$A_0:=\{x\in \D_0\mid z_1(x),\dots,z_r(x)\in\spa_{\QQ}\{y_1(x),\dots,y_k(x)\}\}$$
we obtain $A_0=A\cap \D_0$ by Lemma~\ref{dic0}. 
Moreover, it follows by \eqref{dic3_proof_eq1} along with Lemma~\ref{dic2} that there exists a dense open subset $\D_{00}\subseteq \D_0$ for which exactly one of the following cases occurs: 
\begin{itemize}
\item One has $\D_{00}\subseteq A_0$. 
\item The set $\D_0\setminus A_0$ is dense in $\D_0$, and either $\Int(\D_0\setminus A_0)=\emptyset$ or $\Int(\D_0\setminus A_0)$ is dense in $\D_0$. 
\end{itemize}
Since $\D_0$ is a dense open subset of $\D$, this concludes the proof. 
\end{proof}

\subsection{Oblique projections and an application of Moore-Penrose inverses}

We start by recalling the definition of the Moore-Penrose inverse of a bounded operator in a finite dimensional Hilbert space. (See \cite[subsect. 5.5.4]{GvL96}) for the next definition, and  \cite{BB17} for the relevance of the Moore-Penrose inverse in the context of oblique projections and more  references.)

\begin{definition}
	\normalfont
	Let $\Hc$ be any finite-dimensional real Hilbert space. 
	For every operator $A\in\Bc(\Hc)$ its \emph{Moore-Penrose inverse} is the operator 
	$A^\dagger:=B\in\Bc(\Hc)$ that is uniquely determined by the equations 
	$$ABA=A,\ BAB=B,\ (AB)^*=AB,\ (BA)^*=BA$$
		Then $A^\dagger$ exists for every $A\in\Bc(\Hc)$.
\end{definition}

\begin{definition}
\normalfont 
For any fixed integer $n\ge 1$ we define the polynomial functions $\gamma_0,\gamma_1,\dots,\gamma_m\colon \Bc(\RR^n,\RR^m)\to\RR$ by the equation  
$$(\forall t\in\RR)(\forall A\in \Bc(\RR^n,\RR^m))\quad 
\det(\1+tAA^\top)=\sum_{k=0}^m\gamma_k(A)t^k.$$
It is easily seen that $\gamma_0(A)=1$ and $\gamma_m(A)=\det(AA^\top)$ for all $A\in \Bc(\RR^n,\RR^m)$. 
The functions $\gamma_0,\gamma_1,\dots,\gamma_m$ are called the \emph{Gram coefficients}; cf. \cite[Def. 1.2]{DGL05}.
\end{definition}

\begin{lemma}\label{MP}
For any integers $m,n\ge 1$ and $A\in \Bc(\RR^n,\RR^m)$ the following assertions hold: 
\begin{enumerate}[{\rm(i)}]
	\item If $r\in\{1,\dots,n\}$, one has $\dim(\Ker A)=n-r$ 
	if and only if $\gamma_r(A)\ne0$ and 
	$$(\forall t\in\RR)\quad 
	\det(\1+tAA^\top)=\sum_{k=0}^r\gamma_k(A)t^k.$$
	\item If $\dim(\Ker A)=n-r$, then the orthogonal projection onto $\Ker A$ is given by 
	$$P_{\Ker A}=\1-\frac{1}{\gamma_r(A)}\sum_{k=0}^{r-1}
	(-1)^{r-1-k}\gamma_k(A)(A^\top A)^{r-k}.$$
	\item If $\dim(\Ker A)=n-r$, then the Moore-Penrose inverse of $A$ is given by 
	$$A^\dagger=\frac{1}{\gamma_r(A)}\Bigl(\sum_{k=0}^{r-1}
	(-1)^{r-1-k}\gamma_k(A)(A^\top A)^{r-1-k})\Bigr)A^\top\in\Bc(\RR^m,\RR^n).$$
\end{enumerate}
\end{lemma}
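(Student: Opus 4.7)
The plan is to reduce the three assertions to a spectral analysis of the symmetric positive semidefinite operator $C := A^\top A \in \Bc(\RR^n)$, exploiting the fact that $\Ker A = \Ker C$ and that $\gamma_k(A)$ will turn out to be the $k$-th elementary symmetric polynomial in the nonzero eigenvalues of $C$.

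For assertion (i), I would first invoke the Weinstein--Aronszajn identity $\det(\1+tAA^\top)=\det(\1+tA^\top A)$, which together with the observation that the nonzero eigenvalues of $AA^\top$ and $A^\top A$ coincide reduces the study to the spectrum of $C$. Since $AA^\top$ is symmetric positive semidefinite, writing its nonzero eigenvalues (with multiplicity) as $\lambda_1,\dots,\lambda_s$ with $s=\rank(A)=n-\dim(\Ker A)$, one has
\[
\det(\1+tAA^\top)=\prod_{i=1}^s(1+t\lambda_i)=\sum_{k=0}^s e_k(\lambda_1,\dots,\lambda_s)\,t^k,
\]
where $e_k$ denotes the $k$-th elementary symmetric polynomial. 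Comparing with the definition of the Gram coefficients, this gives $\gamma_k(A)=e_k(\lambda_1,\dots,\lambda_s)$ for $k\le s$ and $\gamma_k(A)=0$ for $k>s$, while $\gamma_s(A)=\lambda_1\cdots\lambda_s>0$. Hence the conjunction ``$\gamma_r(A)\ne 0$ and $\gamma_k(A)=0$ for $k>r$'' (which is exactly what the truncated formula in (i) expresses) is equivalent to $s=r$, that is, $\dim(\Ker A)=n-r$.

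For (ii) and (iii) I would assume $\dim(\Ker A)=n-r$ and work with the orthogonal decomposition $\RR^n=\Ker C\oplus \Ran C$ (valid since $C$ is symmetric). The key observation, obtained from (i), is that every nonzero eigenvalue $\lambda$ of $C$ is a root of the polynomial
\[
p(s)=\sum_{k=0}^r(-1)^k\gamma_k(A)\,s^{r-k}.
\]
Rearranging $p(\lambda)=0$ and using $(-1)^{r-1+k}=(-1)^{r-1-k}$ yields the identity $\gamma_r(A)=\sum_{k=0}^{r-1}(-1)^{r-1-k}\gamma_k(A)\,\lambda^{r-k}$. I would then verify (ii) on each summand of the decomposition: on any eigenvector of $C$ with nonzero eigenvalue the above identity forces the right-hand side of (ii) to vanish, whereas on $\Ker C$ every term $C^{r-k}$ with $k\le r-1$ kills the vector, so the right-hand side reduces to $\1$. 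Both outcomes agree with $P_{\Ker C}=P_{\Ker A}$.

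For (iii) I would use the standard identity $A^\dagger=C^\dagger A^\top$ (checked directly from the four Moore--Penrose equations, noting that $C^\dagger C=P_{\Ran C}$ and that $A$ kills $\Ker C=\Ker A$). Dividing $p(\lambda)=0$ by $\lambda\ne 0$ produces
\[
\lambda^{-1}=\frac{1}{\gamma_r(A)}\sum_{k=0}^{r-1}(-1)^{r-1-k}\gamma_k(A)\,\lambda^{r-1-k},
\]
which shows that $C^\dagger$ coincides with $\frac{1}{\gamma_r(A)}\sum_{k=0}^{r-1}(-1)^{r-1-k}\gamma_k(A)\,C^{r-1-k}$ on $\Ran C$. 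Since $A^\top$ maps $\RR^m$ into $\Ran(A^\top)=(\Ker A)^\perp=\Ran C$, the formula in (iii) follows upon multiplying on the right by $A^\top$. The only real obstacle is careful bookkeeping of signs and indices in the polynomial manipulations; conceptually the argument is simply Cayley--Hamilton applied to the reduced characteristic polynomial of $C$ restricted to its range.
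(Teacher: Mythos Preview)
Your argument is correct. The paper does not prove this lemma at all: it simply cites \cite[Lemmas 1.3--1.4 and Prop.~1.6]{DGL05} and moves on. What you have written is essentially the standard self-contained derivation: identify $\gamma_k(A)$ with the elementary symmetric functions of the nonzero eigenvalues of $A^\top A$ via Sylvester's determinant identity, and then use that the restriction of $C=A^\top A$ to $\Ran C$ satisfies the degree-$r$ polynomial $p(s)=\sum_{k=0}^r(-1)^k\gamma_k(A)s^{r-k}$ (a Cayley--Hamilton-type fact for the reduced characteristic polynomial). Your handling of (iii) is clean: the polynomial expression for $C^\dagger$ is only valid on $\Ran C$, but since $\Ran A^\top=(\Ker A)^\perp=\Ran C$, right-multiplication by $A^\top$ makes the discrepancy on $\Ker C$ irrelevant. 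This is presumably close in spirit to the proofs in \cite{DGL05}, so there is no genuine methodological divergence---you have simply unpacked the citation.
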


\begin{proof}
See \cite[Lemmas 1.3--1.4 and Prop. 1.6]{DGL05}.  	
\end{proof}

We recall the following definition.

\begin{definition}\label{oblpr}
	\normalfont
If $\Ug$ is a finite-dimensional real vector space with two subspaces $\Ug_1,\Ug_2\subseteq\Ug$ with $\Ug=\Ug_1\dotplus\Ug_2$, 
then the corresponding linear \emph{oblique projection of $\Ug$ onto $\Ug_2$ along $\Ug_1$} 
is the linear operator $E_{\Ug_2}\colon \Ug\to\Ug$ defined by the conditions $\Ker E_{\Ug_2}=\Ug_1$ and $E_{\Ug_2}w=w$ for every $w\in\Ug_2$. 
\end{definition}

\begin{proposition}\label{norm0}
Let $\Ug$, $\Xc$, and $\Yc$ be finite-dimensional real vector spaces. 
If $\D\subseteq\Xc$, $\Ug_0\in\Gr(\Ug )$, and 
$T\colon \D\to \Bc(\Ug,\Yc)$ is a rational mapping with $\Ker T(x)\in\Gr_{\Ug_0}(\Ug)$ for all $x\in \D$,  
then the mapping $\theta\colon \D\to \Bc(\Ug,\Ug_0)$, $\theta(x):=E_{\Ug_0}(\Ker T(x))$ is rational. 
\end{proposition}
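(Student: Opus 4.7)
The plan is to express $\theta(x)$ as the composition $T_0(x)^\dagger \circ T(x)$, where $T_0(x) := T(x)|_{\Ug_0}$ is the restriction of $T(x)$ to $\Ug_0$ and $T_0(x)^\dagger$ is its Moore--Penrose inverse (with respect to fixed inner products on $\Ug_0$ and $\Yc$), and then to invoke the explicit formula of Lemma~\ref{MP}(iii) to read off rationality.

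First I would fix inner products on $\Ug$, $\Ug_0$, and $\Yc$, and set $k := \dim \Ug_0$. Since $\Ker T(x) \in \Gr_{\Ug_0}(\Ug)$ gives $\Ug_0 \dotplus \Ker T(x) = \Ug$ for every $x \in \D$, the dimensions $\dim \Ker T(x)$ and $\rank T(x) = k$ are constant on $\D$, and the restriction $T_0(x) \in \Bc(\Ug_0,\Yc)$ is injective because $\Ug_0\cap \Ker T(x)=\{0\}$. Identifying $\Ug_0$ and $\Yc$ with Euclidean spaces, this means $\dim \Ker T_0(x)=0$, so by Lemma~\ref{MP}(i) the Gram coefficient $\gamma_k(T_0(x))$ is nonvanishing on $\D$, and Lemma~\ref{MP}(iii) with $r=k$ expresses
$$T_0(x)^\dagger=\frac{1}{\gamma_k(T_0(x))}\Bigl(\sum_{j=0}^{k-1}(-1)^{k-1-j}\gamma_j(T_0(x))(T_0(x)^\top T_0(x))^{k-1-j}\Bigr)T_0(x)^\top$$
as an explicit rational function of $T_0(x)$, hence of $x$ since $T$ is rational by hypothesis.

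Next I would verify the identity $\theta(x) = T_0(x)^\dagger \circ T(x)$. For $u \in \Ug_0$ one has $T(x)u = T_0(x)u$, and since $T_0(x)$ is injective its Moore--Penrose inverse is a left inverse, so $T_0(x)^\dagger T(x) u = u$. For $u \in \Ker T(x)$ the composition vanishes trivially. Because $\Ug = \Ug_0 \dotplus \Ker T(x)$, these two properties characterize $\theta(x)$ as the oblique projection of $\Ug$ onto $\Ug_0$ along $\Ker T(x)$ in the sense of Definition~\ref{oblpr} (viewed as a map into $\Ug_0$). Since the composition of rational operator-valued maps is rational whenever the denominator in the Moore--Penrose formula is nowhere zero on $\D$, the rationality of $\theta$ follows.

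I do not anticipate any real obstacle; the heart of the argument is simply to recognize that the transversality condition $\Ker T(x)\in\Gr_{\Ug_0}(\Ug)$ is exactly what makes $T_0(x)=T(x)|_{\Ug_0}$ of constant maximal rank $k$, which is precisely the setting in which the closed rational formula of Lemma~\ref{MP}(iii) applies uniformly in $x$.
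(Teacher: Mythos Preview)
Your proof is correct, and it takes a cleaner route than the paper's. The paper fixes an inner product on $\Ug$ and invokes an identity from \cite[Lemma~2.3]{BB17} that expresses the oblique projection as
\[
\theta(x)=P_{\Ker T(x)}\bigl((\1-P_{\Ug_0})P_{\Ker T(x)}\bigr)^\dagger(\1-P_{\Ug_0}),
\]
then applies Lemma~\ref{MP} twice: once to see that $P_{\Ker T(x)}$ is rational in $x$ (part~(ii), using that $\rank T(x)$ is constant), and once to see that the Moore--Penrose inverse in the middle is rational (part~(iii), using that $(\1-P_{\Ug_0})P_{\Ker T(x)}$ has constant rank). Your argument bypasses the external reference entirely: the single observation that $T_0(x)=T(x)\vert_{\Ug_0}$ is injective of constant rank $k=\dim\Ug_0$ already puts you in the setting of Lemma~\ref{MP}(iii), and the identity $\theta(x)=T_0(x)^\dagger T(x)$ is immediate from the defining properties of the oblique projection. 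This is both shorter and more self-contained.

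One small remark: the codomain $\Bc(\Ug,\Ug_0)$ in the statement led you to interpret $\theta(x)$ as the projection onto $\Ug_0$ along $\Ker T(x)$, whereas the paper's proof and the application in Theorem~\ref{15March2019} actually use the complementary projection onto $\Ker T(x)$ along $\Ug_0$. This does not affect correctness, since $\id-T_0(x)^\dagger T(x)$ gives the other projection and is equally rational; but it is worth being aware of the ambiguity when you match your formula against later uses.
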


\begin{proof}
Fix a scalar product on $\Ug$, so that $\Ug$ becomes a real Hilbert space. 
It follows by 
 \cite[Lemma 2.3]{BB17} 
that 
$$(\forall x\in \D)\quad 
\theta(x)=P_{\Ker T(x)}((\1-P_{\Ug_0})P_{\Ker T(x)})^\dagger (\1-P_{\Ug_0}).$$
On the other hand, denoting $r:=\dim\Ug_0$, for all $x\in \D$ one has by hypothesis $\Ker T(x)\in\Gr_{\Ug_0}(\Ug)$ 
hence $\dim(\Ker T(x))=\dim\Ug-r$. 
It is also easily checked that 
$$(\forall x\in \D)\quad 
\Ker((\1-P_{\Ker T(x)})P_{\Ug_0})=\Ug_0^\perp.$$ 
It then follows by Lemma~\ref{MP} along with the above fomula that $\theta$ is a composition of rational mappings, 
hence $\theta$ is in turn rational a rational function. 
\end{proof}

The following example is needed in the proof of Corollary~\ref{main_reg}.
\begin{example}[Stratification according to a unipotent representation]
	\label{strat}
\normalfont
Let $\gg$ be a nilpotent Lie algebra with $m:=\dim\gg$ and its corresponding group $G=(\gg, \cdot)$, $\Vc$~be a finite-dimensional real vector space with $n:=\dim\Vc$, and $\rho\colon G\to\End(\Vc)$ be a unipotent representation, that is, $(\1-\rho(x))^n=0$ for all $x\in G$.  
The contragredient representation $\rho^*\colon G\to\End(\Vc^*)$, $\rho^*(x):=\rho(-x)^*$, is unipotent as well. 
For a Jordan-H\"older sequence of ideals of $\gg$, 
$$\Fg_\bullet:\quad\{0\}=\Fg_0\subseteq\Fg_1\subseteq\cdots\subseteq\Fg_m=\gg$$  
and a sequence of linear subspaces 
$$\Yc_\bullet:\quad \{0\}=\Yc_0\subseteq\Yc_1\subseteq\cdots\subseteq\Yc_n=\Vc^*$$ 
with $\dim\Yc_j=j$ and $\de\rho^*(\gg)\Yc_j\subseteq\Yc_{j-1}$ for $j=1,\dots,n$,  
and a linear subspace $\hg\subseteq\gg$
we denote 
 $$\jump_{\Fg_\bullet}(\hg):=\{j\in\{1,\dots,m\}\mid \Fg_{j-1}+\hg\subsetneqq\Fg_j+\hg\}.$$ 
 By the construction of \cite[\S 1.4]{Pe94} we obtain  a stratification of $(\Vc^*)^*=\Vc$ according to the representation~$\rho^*$. 
That is, we obtain a partition 
$$\Vc=\bigsqcup\limits_{\varepsilon\in{\boldsymbol\Ec}}
\Omega_\varepsilon, $$
where the set ${\boldsymbol\Ec}$ is finite and is endowed with a total ordering $\prec$ satisfying the following conditions: 
\begin{enumerate}[{\rm(i)}]
	\item\label{strat_item1} For every $\varepsilon\in{\boldsymbol\Ec}$ the set $\Omega_\varepsilon$ is $\rho(G)$-invariant and there exists a subset $e(\varepsilon)\subseteq\{1,\dots,m\}$ 
	with $\jump_{\Fg_\bullet}(\gg(v))=e(\varepsilon)$ 
	 for all $v\in\Omega_\varepsilon$, where $\gg(v)$ is the isotropy subalgebra for $\rho$ at $v$, that is, the Lie algebra of $G(v)$. 
	\item\label{strat_item2} There exists a family of polynomial functions $\{Q_\varepsilon\colon\Vc\to\RR\mid \varepsilon\in{\boldsymbol\Ec}\}$ with 
	$\Omega_\varepsilon=\{v\in\Vc\mid Q_\varepsilon(v)\ne0=Q_{\varepsilon'}\text{ if }\varepsilon'\prec\varepsilon\}$. 
	(See \cite[Prop.~1.3.2]{Pe94}.)
	In particular, if $\varepsilon_0=\min{\boldsymbol\Ec}$, then $\Omega_{\varepsilon_0}$ is a dense open subset of~$\Vc$. 
\end{enumerate}
In this setting we also recall for later use that for any selection $x_j\in\gg_j\setminus\gg_{j-1}$ for $j=1,\dots,m$,  
\begin{equation}\label{strat_eq1}
\text{if $e:=\jump_{\Fg_\bullet}(\hg)$ and 
	$\gg_{e,\hg}:=\spa\{x_j\mid j\in e\}$, then 
	$\hg\in\Gr_{\gg_{e,\hg}}(\gg)$,}
\end{equation}
that is, there is the direct sum decomposition 
$\gg=\hg\dotplus\gg_{e,\hg}$. 
(See \cite[Prop. 3.4(x)]{BB17}.) 
We  note  that actually
\begin{equation}\label{strat_eq2}
\Fg_j=(\hg\cap\Fg_j)\dotplus(\gg_{e,\hg}\cap\Fg_j)\text{ for }j=1,\dots,m.  
\end{equation} 
To see this, let $q:=E_{\hg,\gg}(\gg_{e,\hg})\colon \gg\to\gg$ be the oblique projection onto $\hg$ along $\gg_{e,\hg}$ given by the direct sum decomposition $\gg=\hg\dotplus\gg_{e,\hg}$. 
It suffices to show that $q(\Fg_j)\subseteq\Fg_j$ for $j=1,\dots,n$.  
If $j=1$, then one has either $1\in e$ and then $q(x_1)=0\in\Fg_1$, 
or $1\not\in e=\jump_{\Fg_\bullet}(\hg)$ and then $\Fg_1\subseteq\hg$ hence $q(x_1)=x_1\in\Fg_1$. 
In any of these cases we obtain $q(\Fg_1)=\RR q(x_1)\subseteq\Fg_1$. 
Now let $2\le j\le m$ and assume that $q(\Fg_{j-1})\subseteq\Fg_{j-1}$. 
If $j\in e$, then $x_j\in\gg_{e,\hg}=\Ker q$, hence $q(x_j)=0\in\Fg_j$. 
If $j\not\in e$, then $x_j\in\Fg_{j-1}+\hg$, hence by $\Fg_{j-1}\subseteq\Fg$ we directly obtain $x_j\in\Fg_{j-1}+(\hg\cap\Fg_j)$. 
Therefore $q(x_j)\in q(\Fg_{j-1})+\hg\cap\Fg_j\subseteq \Fg_{j-1}+\Fg_j=\Fg_j$. 
Using again the induction hypothesis $q(\Fg_{j-1})\subseteq\Fg_{j-1}$ we then obtain $q(\Fg_j)\subseteq\Fg_j$, and we are done. 
\end{example}

\section{Main results on regular orbits}\label{Sect_reg}

In this section we establish some of our main results on regular orbits of linear dynamical systems of nilpotent Lie groups. 
These results include a description of the points where the orbits are regular (Theorem~\ref{ArCuOu16_Th3.6}), a dichotomy for the set of these points (Theorem~\ref{15March2019}), and an interpretation of that dichotomy in terms of transformation groups $C^*$-algebras (Corollary~\ref{main_reg}), which holds if and ony if the group under consideration is abelian (Proposition~\ref{nonabelian}). 

\begin{setting}\label{sett_decomp}
	\normalfont
Throughout this section we keep the following hypothesis and notation.
\begin{itemize}
	\item $G$ is a nilpotent Lie group; 
	\item $\pi\colon G\to\End_{\RR}(\Vc)$ is a continuous representation on the finite-dimensional real vector space~$\Vc$; 
	\item $\Vc=\Vc_1\dotplus\cdots\dotplus\Vc_m$ is a direct sum decomposition into $\pi(G)$-invariant subspaces satisfying the condition that for every $j=1,\dots,m$ there exists a field  $\KK_j\in\{\RR,\CC\}$ and a $\KK_j$-vector space structure of $\Vc_j$ that agrees with its structure of a real vector subspace of~$\Vc$ and there exists a function $\chi_j\colon G\to\KK_j^\times$ for which $\pi(g)\vert_{\Vc_j}\in\End_{\KK_j}(\Vc_j)$ and $(\pi(g)\vert_{\Vc_j}-\chi_j(g)\id_{\Vc_j})^{m_j}=0$ for all $g\in G$, where $m_j:=\dim_{\KK_j}(\Vc_j)$. 
	\item For every $v\in\Vc\setminus\{0\}$ we denote by $\supp\,v$ the set $J\subseteq\{1,\dots,m\}$ for which there exist $v_j\in\Vc_j\setminus\{0\}$ for all $j\in J$ with $v=\sum\limits_{j\in J}v_j$. 
\end{itemize}
\end{setting}

\begin{remark}
\normalfont
The restriction of $\pi$ to its invariant subspace $\Vc_j$ has a joint eigenvector by the classical theorem of S. Lie \cite[Ch. 8, \S 1, Th. 1 and Cor. 2]{BaRa86}, and it then directly follows that the function $\chi_j\colon G\to\KK_j^\times$ is a continuous morphism. 

It then further follows that there exist and are uniquely determined the linear functionals 
$\alpha_j,\beta_j\colon\gg\to\RR$ satisfying $[\gg,\gg]\subseteq(\Ker\alpha_j)\cap(\Ker\beta_j)$ and 
$$(\forall x\in\gg)\quad
 \chi_j(\exp_G x)=\ee^{\alpha_j(x)+\ie\beta_j(x)}.$$
\end{remark}

\begin{definition}\label{sett_decomp_def}
\normalfont
We define the abelian Lie group 
$Z:=\KK_1^\times \times\cdots\times\KK_m^\times$ 
and the mappings 
\begin{align}
E &\colon G\to\End_{\RR}(\Vc),\quad E(g):=\chi_1(g)\id_{\Vc_1}\dotplus\cdots+\chi_m(g)\id_{\Vc_m},
\nonumber\\
E_0 &\colon G\to Z,\qquad\qquad E_0(g):=(\chi_1(g),\dots,\chi_m(g)), \nonumber\\
\nu &\colon G\to\End_{\RR}(\Vc),\quad \nu(g):=E(g)^{-1}\pi(g). \nonumber
\end{align}
For every $v\in\Vc$ we also define $G_\nu(v):=\{g\in G\mid \nu(g)v=v\}$. 
\end{definition}

\begin{remark}\label{enupi}
\normalfont 
We note the following properties of the objects introduced above: 
\begin{enumerate}[{\rm (i)}]
	\item\label{enupi_item1} Both $E$ and $\nu$ are continuous representations with $[E(G),\nu(G)\cup\pi(G)]=\{0\}$ and $\pi(\cdot)=E(\cdot)\nu(\cdot)$. 
	\item\label{enupi_item2} For every $g\in G$ one has $(\nu(g)-\id_{\Vc})^N=0$, where $N=\dim \Vc$, 
	and this further implies that the closed subgroup $G_\nu(v)$ of $G$ is connected for all $v\in\Vc$. 
	(See \cite[Lemma 3.1.1]{CG90}.)
\end{enumerate}
\end{remark}

We now state the following generalization of \cite[Th. 3.6]{ArCuOu16} from abelian to general nilpotent Lie groups.

\begin{theorem}\label{ArCuOu16_Th3.6}
The following assertions are equivalent for every $v\in \Vc\setminus\{0\}$: 
\begin{enumerate}[{\rm (i)}]
	\item\label{ArCuOu16_Th3.6_item1} 
	The subset $\pi(G)v\subseteq \Vc$ is locally compact. 
	\item\label{ArCuOu16_Th3.6_item2} 
	The subgroup $E_0(G)\subseteq Z$ is closed. 
	\item\label{ArCuOu16_Th3.6_item3} 
	The additive subgroup 
	$\langle\{\beta_j\vert_{\gg_\nu(v)}\mid j\in\supp\, v\}\rangle\subseteq\gg_\nu(v)^*$ 
	is discrete. 
\end{enumerate}
\end{theorem}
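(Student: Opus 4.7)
The plan is to exploit the commuting factorization $\pi(g)=E(g)\nu(g)$ from Remark~\ref{enupi} together with the reduction theory of Section~\ref{Sect_prel} and the rationality tools of Section~\ref{Sect_dichot}, splitting the three-way equivalence into the topological equivalence (i)$\Leftrightarrow$(ii) and the algebraic equivalence (ii)$\Leftrightarrow$(iii). As preparation, I note that $\nu$ being unipotent forces $\nu(G)v$ to be a closed real-algebraic subset of $\Vc$ and $G_\nu(v)$ to be closed and connected (Remark~\ref{enupi}); moreover for $g\in G_\nu(v)$ one has $\pi(g)v=E(g)v=\sum_{j\in\supp v}\chi_j(g)v_j$, so only the characters indexed by $\supp v$ enter this restricted orbit picture. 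Since $[\gg,\gg]\subseteq\Ker\alpha_j\cap\Ker\beta_j$, the morphism $E_0$ factors through the abelianization $G/[G,G]$, reducing the structural analysis of $E_0(G)$ to the abelian setting.

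For (i)$\Leftrightarrow$(ii), I would apply Lemma~\ref{ACDO3.1_gen} to a $G$-equivariant surjection $q\colon\Vc\to W$ that collapses $\nu(G)$-orbits. The commuting factorization identifies the image of $\pi(G)v$ under $q$ with the $E_0(G)$-orbit of $q(v)$ under the induced scalar $Z$-action, while the closedness of the $\nu$-orbit from the preparation supplies the hypothesis that $\Oc_0$ is locally compact in Lemma~\ref{ACDO3.1_gen}(iv). That lemma then reduces local compactness of $\pi(G)v$ to local compactness of the image $E_0(G)$-orbit on $W$. A separate argument, using the linearity of the $Z$-action on each $\Vc_j$, converts the latter into closedness of $E_0(G)$ in $Z$.

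For (ii)$\Leftrightarrow$(iii), writing $\chi_j=\ee^{\alpha_j+\ie\beta_j}$ and passing to $G/[G,G]$ shows that closedness of $E_0(G)$ in $Z=\prod_j\KK_j^\times$ is an abelian question, governed by the interplay between the compact directions (spanned by the $\beta_j$'s) and the noncompact directions (spanned by the $\alpha_j$'s). Lemma~\ref{dic0} converts this closedness into discreteness of the subgroup of $\gg^*/\spa_\RR\{\alpha_j\}$ generated by the $\beta_j$'s. Restriction to $\gg_\nu(v)$ and to the index set $\supp v$, which is what condition (iii) encodes, is justified by a test-sequence reduction: any sequence $g_n\to\infty$ in $G$ with $\pi(g_n)v\to v$ can, using the closed orbit $\nu(G)v$ and the homeomorphism $G/G_\nu(v)\to\nu(G)v$, be modified to lie in $G_\nu(v)$, where only the $\supp v$-characters are seen.

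The main obstacle is this last reduction step: one needs to verify that the discreteness witnessed by test sequences can always be arranged to sit inside $G_\nu(v)$ and to involve only the $\supp v$-characters, so that condition (iii), which is intrinsic to $\gg_\nu(v)$ and $\supp v$, faithfully captures closedness of the image $E_0(G)$ in $Z$. The construction of the equivariant quotient $q$ in (i)$\Leftrightarrow$(ii) is also somewhat delicate since the space of $\nu(G)$-orbits is not globally Hausdorff, and one must work either locally or via a canonical transversal to $\nu(\gg)v$ in $\Vc$ to guarantee the equivariant surjectivity required by Lemma~\ref{ACDO3.1_gen}.
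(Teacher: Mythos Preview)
Your reduction for (i)$\Leftrightarrow$(ii) has the roles of $E$ and $\nu$ reversed, and this creates a genuine gap. You want $q$ to collapse $\nu(G)$-orbits so that the residual action on $W$ is via $E_0$, and then to feed closedness of $\nu(G)v$ into Lemma~\ref{ACDO3.1_gen} as local compactness of~$\Oc_0$. But $\Oc_0=\pi(G(w_0))v$ where $G(w_0)$ is the isotropy of $w_0=q(v)$; with your $q$ one finds $G(w_0)=\{g\in G: E(g)v\in\nu(G)v\}$, and $\pi(G(w_0))v$ is only a (generally proper) subset of $\nu(G)v$, so its local compactness does not follow from closedness of $\nu(G)v$. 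Moreover, part~(iv) of Lemma~\ref{ACDO3.1_gen} yields the equivalence between local compactness of $\Oc$ and of $\Oc_0$ only under the hypothesis that $\Oc_{w_0}$ is locally compact, not the other way around; and the non-Hausdorff orbit space you flag is a further obstruction to applying that lemma at all.

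The paper does the opposite. After restricting to $\bigoplus_{j\in\supp v}\Vc_j$ (this is where the reduction to $\supp v$ happens), it takes $W=\PP(\Vc_1)\times\cdots\times\PP(\Vc_m)$, a compact Hausdorff space. Since $\pi(g)v_j=\chi_j(g)\nu(g)v_j$, projectivization kills the scalars $\chi_j$, so the induced $G$-action $\widetilde{\nu}$ on $W$ comes from $\nu$, not from $E_0$. Lemma~\ref{18March2019} shows that $q\vert_{\nu(G)v}\colon\nu(G)v\to G.q(v)$ is a homeomorphism; this gives both the isotropy identity $G(w_0)=G_\nu(v)$ and local compactness of $\Oc_{w_0}$, inherited from the closed unipotent orbit. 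Lemma~\ref{ACDO3.1_gen}(iv) then reduces (i) to local compactness of $\Oc_0=\pi(G_\nu(v))v$, and since $\nu(g)v=v$ on $G_\nu(v)$ the map $\pi(g)v\mapsto E_0(g)$ is a homeomorphism onto $E_0(G_\nu(v))\subseteq Z$. Closedness of the latter is equivalent to (iii) by Lemma~\ref{lattices} with $H=G_\nu(v)$. Thus the passage to $G_\nu(v)$ and to $\supp v$ is achieved geometrically via the projective quotient, and no test-sequence argument is needed.
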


The proof requires some preparation.

\begin{notation}\label{not_X}
\normalfont
We denote  
$$\begin{aligned}
X:=
& \{v\in\Vc\setminus\{0\}\mid\supp\,v=\{\1,\dots,m\}\} \\
=
&\{v_1+\cdots+v_m\mid v_j\in\Vc_j\setminus\{0\}\text{ for }j=1,\dots,m\}.
\end{aligned}$$
For $j=1,\dots,m$ and $v\in\Vc_j\setminus\{0\}$ we also denote $[v]:=\KK_j v\subseteq\Vc$. 
We also define the projective space $\PP(\Vc_j):=\{[v]\mid v\in\Vc_j\setminus\{0\}\}$, endowed with its usual topology of a compact space for which the mapping $\Vc_j\setminus\{0\}\to\PP(\Vc_j)$, $v\mapsto[v]$, is a quotient mapping. 

We denote $W:=\PP(\Vc_1)\times\cdots\times\PP(\Vc_m)$ and set
$$q\colon X\to W,\quad q(v_1+\cdots+v_m):=([v_1],\dots,[v_m]).$$
Consider the group action 
$$\widetilde{\nu}\colon G\times W\to W,\quad 
\widetilde{\nu}(g,w):=g.w$$
where $g.([v_1],\dots,[v_m]):=([\nu(g)v_1],\dots,[\nu(g)v_m])$ 
for all $w=([v_1],\dots,[v_m])\in W$ and $g\in G$. 
We note for later reference that 
\begin{equation}\label{nupi}
(\forall j\in\{1,\dots,m\})(\forall v_j\in\Vc_j\setminus\{0\})(\forall g\in G)\quad [\pi(g)v_j]=[\nu(g)v_j]
\in\PP(\Vc_j).
\end{equation}
\end{notation}

\begin{lemma}\label{18March2019}
One has the commutative diagram 
$$
\xymatrix
{G\times X \ar[r]^{\pi}	\ar[d]_{\id_G\times q} & X  \ar[d]^{q}   \\
G\times W \ar[r]^{\widetilde{\nu}} &  W
}
$$
and for every $v\in X$ the mapping $q\vert_{\nu(G)v}\colon \nu(G)v\to G.q(v)=\widetilde{\nu}(G\times\{q(v)\})$ is a $G$-equivariant homeomorphism.  
\end{lemma}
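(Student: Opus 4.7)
For the commutative diagram, I would unpack the definitions. Given $v = v_1+\cdots+v_m\in X$ and $g\in G$, the factorization $\pi(g) = E(g)\nu(g)$ together with $E(g)\vert_{\Vc_j} = \chi_j(g)\id$, $\chi_j(g)\in\KK_j^\times$, yields $\pi(g)v_j = \chi_j(g)\nu(g)v_j \neq 0$; hence $\pi(g)v\in X$ and $[\pi(g)v_j] = [\nu(g)v_j]$ in $\PP(\Vc_j)$. Consequently $q(\pi(g)v) = ([\nu(g)v_j])_j = g.q(v) = \widetilde\nu(g,q(v))$. The same computation with $\nu$ in place of $\pi$ shows $q(\nu(g)v) = g.q(v)$, which gives at once the $G$-equivariance of $q\vert_{\nu(G)v}$ and its surjectivity onto $G.q(v)$. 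Injectivity follows from the unipotent structure: if $q(\nu(g_1)v) = q(\nu(g_2)v)$, then for each $j$ there is $c_j\in\KK_j^\times$ with $\nu(g_2^{-1}g_1)v_j = c_jv_j$; but $\nu(g)\vert_{\Vc_j}$ is unipotent in $\End_{\KK_j}(\Vc_j)$ (since $(\pi(g)\vert_{\Vc_j} - \chi_j(g)\id)^{m_j} = 0$), so its only $\KK_j$-eigenvalue is $1$, forcing $c_j = 1$ and $\nu(g_1)v = \nu(g_2)v$. Continuity of $q\vert_{\nu(G)v}$ is inherited from that of $q$.

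The substantive step is to verify that the inverse of $q\vert_{\nu(G)v}$ is continuous. I plan to exhibit an explicit continuous section of $q$ on an open neighborhood of $G.q(v)$ whose restriction to $G.q(v)$ lands in $\nu(G)v$. Since $\nu(\exp tx)\vert_{\Vc_j} = \exp(t\,d\nu(x)\vert_{\Vc_j})$ is unipotent for every $t\in\RR$ and $x\in\gg$, all eigenvalues of $d\nu(x)\vert_{\Vc_j}$ must vanish, so $d\nu(\gg)\vert_{\Vc_j}$ is a Lie subalgebra of nilpotent $\KK_j$-linear operators on $\Vc_j$. Engel's theorem then provides a complete $d\nu(\gg)$-invariant flag $\{0\} = W_0^{(j)}\subsetneq\cdots\subsetneq W_{m_j}^{(j)} = \Vc_j$ of $\KK_j$-subspaces with $d\nu(\gg) W_k^{(j)}\subseteq W_{k-1}^{(j)}$. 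Letting $k_j$ be the smallest index with $v_j\in W_{k_j}^{(j)}$, choose a $\KK_j$-linear functional $\phi_j\in\Vc_j^*$ vanishing on $W_{k_j-1}^{(j)}$ and normalized by $\phi_j(v_j) = 1$. Expanding $\nu(g)\vert_{\Vc_j} = \exp(d\nu(\log g)\vert_{\Vc_j})$ as a power series yields $(\nu(g)-\id)v_j\in W_{k_j-1}^{(j)}$, so $\phi_j(\nu(g)v_j) = 1$ for every $g\in G$. Define
$$s([u_1],\dots,[u_m]) := \sum_{j=1}^m \frac{u_j}{\phi_j(u_j)};$$
this is well-defined (each summand is invariant under $u_j\mapsto cu_j$ for $c\in\KK_j^\times$) and continuous on the open subset $\{([u_j])_j\in W \mid \phi_j(u_j)\neq 0 \text{ for all } j\}\supseteq G.q(v)$. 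For $w = h.q(v)$ one computes $s(w) = \sum_j\nu(h)v_j = \nu(h)v\in\nu(G)v$, so $s\vert_{G.q(v)}$ is a continuous right inverse of $q\vert_{\nu(G)v}$, hence a two-sided inverse by the injectivity above.

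The main obstacle is the construction of the normalizing functionals $\phi_j$: it is precisely here that the unipotency of $\nu\vert_{\Vc_j}$ is used, via Engel's theorem applied to $d\nu(\gg)\vert_{\Vc_j}$. Once those functionals are in hand, the remaining verifications reduce to direct manipulations with the explicit section $s$.
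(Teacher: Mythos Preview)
Your argument is correct. The treatment of the commutative diagram, $G$-equivariance, surjectivity, and injectivity of $q\vert_{\nu(G)v}$ coincides with the paper's. The only divergence is in the proof that the inverse is continuous.

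The paper argues sequentially: given $g_n\in G$ with $\widetilde\nu(g_n,q(v))\to q(v)$, one has $[\nu(g_n)v_j]\to[v_j]$ in each $\PP(\Vc_j)$, hence $t_n^{(j)}\nu(g_n)v_j\to v_j$ for some scalars $t_n^{(j)}\in\KK_j$; then, passing to a lower-triangular form of $\nu(\cdot)\vert_{\Vc_j}$ with $1$'s on the diagonal (via Lie's theorem), one reads off $t_n^{(j)}\to 1$ from the leading nonzero coordinate of $v_j$, whence $\nu(g_n)v\to v$. Your approach uses the same structural input, the flag coming from the unipotency of $\nu\vert_{\Vc_j}$ (you invoke Engel rather than Lie, which is the natural choice here), but exploits it differently: instead of tracking a leading coordinate through a limit, you turn that coordinate into a $\KK_j$-linear functional $\phi_j$ that is constant along the $\nu$-orbit of $v_j$, and use it to write down an explicit continuous section $s$ of $q$ defined on an open neighbourhood of $G.q(v)$ in~$W$. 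This buys you slightly more than the paper states, namely a continuous local section of $q$ near the orbit rather than merely continuity of the inverse on the orbit itself; the paper's argument, by contrast, stays closer to first principles and avoids naming the functionals. The two arguments are equivalent at the core: your $\phi_j$ is precisely the coordinate the paper inspects to force $t_n^{(j)}\to 1$.
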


\begin{proof}
Commutativity of this diagram follows by \eqref{nupi}. 

Now let $v=v_1+\cdots+v_m\in X$ arbitrary, where $v_j\in\Vc_j\setminus\{0\}$ for $j=1,\dots,m$. 
It is easily seen that the mapping $q\vert_{\nu(G)v}\colon \nu(G)v\to G.q(v)$ is continuous, surjective, and $G$-equivariant. 
To show that it is also injective, let $g_1,g_2\in G$ arbitrary with $q(\nu(g_1)v)=q(\nu(g_2)v)$. 
Then, for $j=1,\dots,m$, there exists $t_j\in\KK_j^\times$ with $\nu(g_1)v_j=t_j\nu(g_2)v_j$, hence $\nu(g_2^{-1}g_1)v_j=t_jv_j$. 
Now $t_j=1$ by Remark~\ref{enupi}\eqref{enupi_item2}, 
and then $\nu(g_1)v=\nu(g_2)v$, which shows that $q\vert_{\nu(G)v}$ is injective. 

It remains to check that the inverse of $q\vert_{\nu(G)v}\colon \nu(G)v\to G.q(v)$ is continuous. 
To this end it suffices to prove that for every sequence  $\{g_n\}_{n\in\NN}$ in $G$ with $\lim\limits_{n\to\infty}\widetilde{\nu}(g_n,q(v))=q(v)$, one has $\lim\limits_{n\to\infty}\nu(g_n)v=v$. 
In fact, for $j=1,\dots,n$ one has $\lim\limits_{n\to\infty}[\nu(g_n)v_j]=[v_j]$ in $\PP(\Vc_j)$, 
hence there exists a sequence $\{t_n\}_{n\in\NN}$ in $\KK_j$ with 
\begin{equation}\label{18March2019_proof_eq1}
\lim\limits_{n\to\infty}t_n\nu(g_n)v_j=v_j
\end{equation} 
in $\Vc_j\setminus\{0\}$.
On the other hand, by S.\ Lie's theorem for the unipotent representation $G\to\End(\Vc_j)$, $g\mapsto\nu(g)\vert_{\Vc_j}$, 
there exists a basis in the $\KK_j$-vector space $\Vc_j$ with respect to which one has a lower-triangular matrix representation
$$\nu(\cdot)\vert_{\Vc_j}=
\begin{pmatrix}
1 & & 0 \\
  &\ddots & \\
\ast & & 1
\end{pmatrix}
$$
and it then easily follows by \eqref{18March2019_proof_eq1} that 
$\lim\limits_{n\to\infty}t_n=1$ in $\KK_j^\times$, hence 
one obtains by~\eqref{18March2019_proof_eq1} again that 
$\lim\limits_{n\to\infty}\nu(g_n)v_j=v_j$ for every $j$.
This further implies $\lim\limits_{n\to\infty}\nu(g_n)v=v$, 
which completes the proof of the fact that the inverse of  $q\vert_{\nu(G)v}\colon \nu(G)v\to G.q(v)$ is continuous. 
This concludes the proof.
\end{proof}

\begin{lemma}\label{lattices}
For any connected closed subgroup $H\subseteq G$ with its Lie algebra $\hg\subseteq \gg$ define $B_H:=\{\beta_j\vert_\hg\mid j=1,\dots,m\}\subseteq\hg^*$. 
The following conditions are equivalent: 
\begin{enumerate}[{\rm(i)}]
	\item\label{lattices_item1} 
	The subgroup $E_0(H)\subseteq Z$ is closed. 
	\item\label{lattices_item2} One has $\dim_{\RR}(\spa_{\RR}(B_H))=\dim_{\QQ}(\spa_{\QQ}(B_H))$. 
	\item\label{lattices_item3} The additive subgroup $\langle B_H\rangle\subseteq\hg^*$ 
	is discrete. 
\end{enumerate}
\end{lemma}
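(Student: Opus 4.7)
The plan is to establish (ii)$\iff$(iii) by linear algebra applied to the finite set $B_H\subseteq\hg^*$, and then (i)$\iff$(iii) by turning the question about closedness in $Z$ into a question about closedness in a Euclidean space. For (ii)$\iff$(iii) I would pick a maximal $\RR$-linearly independent subset $\{y_1,\dots,y_k\}\subseteq B_H$, express every remaining element $z_i\in B_H$ as $z_i=\sum_j a_{ij}y_j$ with $a_{ij}\in\RR$, and invoke Lemma~\ref{dic0} to conclude that $\langle B_H\rangle$ is discrete iff every $a_{ij}\in\QQ$. A direct computation then shows that this rationality of the $a_{ij}$'s is equivalent to $\dim_{\QQ}\spa_{\QQ}B_H=k=\dim_{\RR}\spa_{\RR}B_H$, giving (ii)$\iff$(iii).

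For (i)$\iff$(iii) I would use the connectedness of $H$ to write $H=\exp_G\hg$, so that $E_0(H)=\phi(\hg)$ where $\phi:=E_0\circ\exp_G|_\hg\colon\hg\to Z$ is a Lie group morphism whose image lies in the identity component $Z_0$ of $Z$. Setting $c:=\#\{j:\KK_j=\CC\}$ and using $\log$ on each $\RR_+$-factor of $\RR^\times$ together with the polar decomposition $\CC^\times\cong\RR\times\RR/2\pi\ZZ$ on each complex factor, there is a Lie group isomorphism $Z_0\cong\RR^m\times(\RR/2\pi\ZZ)^c$, under which $\phi$ factors as
$$\hg\xrightarrow{L}\RR^{m+c}\xrightarrow{q}\RR^m\times(\RR/2\pi\ZZ)^c,$$
where $L(x)=(\alpha_1(x),\dots,\alpha_m(x),\beta_{j_1}(x),\dots,\beta_{j_c}(x))$ (with $j_1,\dots,j_c$ enumerating the complex indices) and $q$ is the identity on the first $m$ coordinates and the quotient modulo $2\pi$ on the last $c$. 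Since $q$ is a quotient map, $\phi(\hg)$ is closed in $Z_0$ iff $L(\hg)+\{0\}^m\oplus(2\pi\ZZ)^c$ is closed in $\RR^{m+c}$, and by Remark~\ref{dic_finite} the latter is equivalent to the images of the generators $2\pi e_{m+k}$ (for $k=1,\dots,c$) in the quotient $\RR^{m+c}/L(\hg)$ generating a discrete subgroup.

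The final step is to identify this last discreteness condition with the discreteness of $\langle B_H\rangle$ in $\hg^*$. The transpose $L^\ast\colon(\RR^{m+c})^\ast\to\hg^\ast$ sends $e_{m+k}^\ast$ to $\beta_{j_k}|_\hg$, so both families of generators originate from $L$ via dual constructions. My plan is to apply Lemma~\ref{dic0} separately to each family in its own ambient vector space and to verify, using the duality between $L$ and $L^\ast$, that the two rationality conditions on the corresponding coefficient matrices coincide. This last matching is the technical heart of the proof and the step I expect to present the main obstacle: the quotient $\RR^{m+c}/L(\hg)$ and the ambient space $\hg^\ast$ have different dimensions and are not canonically isomorphic, so an auxiliary linear-algebra argument is needed to set up the bijection between rational relations correctly and to ensure that the equivalence passes through the transposition.
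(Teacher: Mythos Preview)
Your argument for (ii)$\iff$(iii) via Lemma~\ref{dic0} is correct and is exactly what the paper does. For (i)$\iff$(iii) the paper takes a shorter route than your coordinate computation: since $Z$ is abelian and every $\beta_j$ vanishes on $[\hg,\hg]$, both $E_0|_H$ and the $\beta_j|_\hg$ factor through the abelianization $\widetilde H:=H/(H,H)$ with Lie algebra $\widetilde\hg=\hg/[\hg,\hg]$, so the paper passes to $\widetilde H$ and then simply invokes \cite[Ch.~VII, \S1, no.~5]{Bo74} for the resulting statement about a continuous morphism from a vector group into $Z$, bypassing any explicit lift to $\RR^{m+c}$ and any duality argument.

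The step you flagged as the main obstacle is, however, a genuine gap rather than a routine verification. Your reduction of (i) to the discreteness of $\langle\overline{e_{m+k}}:k=1,\dots,c\rangle$ in $\RR^{m+c}/L(\hg)$ is correct, but this is \emph{not} in general equivalent to discreteness of $\langle L^\ast e_{m+k}^\ast:k\rangle=\langle\beta_{j_k}|_\hg:k\rangle$ in $\hg^*$, because the $\alpha$-block of $L$ can force $E_0(H)$ to be closed without any rationality constraint on the $\beta$'s. Take $\hg=\RR$ with one real factor ($\alpha_1=\id_\RR$, $\beta_1=0$) and two complex factors ($\alpha_2=\alpha_3=0$, $\beta_2=\id_\RR$, $\beta_3=\sqrt 2\,\id_\RR$). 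Then $E_0(x)=(e^x,e^{ix},e^{i\sqrt 2 x})$ has closed image in $Z$ (it is the graph of a continuous map over the proper first coordinate); equivalently $L(\hg)=\RR\cdot(1,0,0,1,\sqrt 2)\subseteq\RR^5$ and $\overline{e_4},\overline{e_5}$ are linearly independent in $\RR^5/L(\hg)$, so they generate a copy of $\ZZ^2$. Yet $\langle\beta_j|_\hg:j\rangle=\ZZ+\sqrt 2\,\ZZ$ is dense in $\hg^*\cong\RR$. Thus the two Lemma~\ref{dic0} rationality conditions you hoped to match are vacuous on one side and nontrivially violated on the other, and no formal transposition argument between $L$ and $L^\ast$ can repair this. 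Extra input is needed --- for instance the hypothesis that every $\alpha_j$ vanishes on $\hg$, under which the $\RR^m$-part of $Z_0$ plays no role and your duality does go through.
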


\begin{proof} 
It is easily seen that Assertions \eqref{lattices_item2}--\eqref{lattices_item3} are equivalent by Lemma~\ref{dic0}. 
	
Let $(H,H)$ denote the subgroup of $H$ generated by its subset $\{hgh^{-1}g^{-1}\mid h,g\in H\}$. 
Since $H$ is a nilpotent Lie group,  $(H,H)$ is the closed connected subgroup of $H$ whose Lie algebra is $[\hg,\hg]$. 
(See \cite[Ch. XII, Th. 3.1]{Ho65}.)
Moreover, $[\hg,\hg]\subseteq\Ker\beta_j$ for $j=1,\dots,m$, and on the other hand $(H,H)\subseteq \Ker E_0$ since $E_0\colon G\to Z$ and the group $Z$ is abelian. 

Now define the abelian Lie group $\widetilde{H}:=H/(H,H)$ with its Lie algebra $\widetilde{\hg}:=\hg/[\hg,\hg]$, 
and denote by $P\colon H\to\widetilde{H}$ and $p\colon\hg\to\widetilde{\hg}$ their corresponding quotient maps. 
It follows by the above observations that there exist and are uniquely determined the 
continuous group morphisms $\widetilde{\chi}_j\colon\widetilde{H}\to \KK_j^\times$ and linear functionals $\widetilde{\beta}_j\colon\widetilde{\hg}\to\RR$ 
satisfying $\widetilde{\chi}_j\circ P=\chi_j\vert_H$ and $\widetilde{\beta}_j\circ p=\beta_j\vert_{\hg}$ for $j=1,\dots,m$. 
Defining $\widetilde{E}_0:=(\widetilde{\chi}_1,\dots,\widetilde{\chi}_m)\colon \widetilde{H}\to Z$, one also has $\widetilde{E}_0\circ P=E$. 
If we also denote 
$$\widetilde{B_H}:=\{\widetilde{\beta}_j\mid j=1,\dots,m\}\subseteq\widetilde{\hg}^*$$ 
then  Assertions \eqref{lattices_item1}, \eqref{lattices_item2}, and \eqref{lattices_item3} from the statement are respectively equivalent to the following assertions: 
\begin{enumerate}[{\rm(i')}]
	\item\label{lattices_proof_item1} 
	The subgroup $\widetilde{E}_0(\widetilde{H})\subseteq Z$ is closed. 
	\item\label{lattices_proof_item2} 
	One has $\dim_{\RR}(\spa_{\RR}(\widetilde{B_H}))=\dim_{\QQ}(\spa_{\QQ}(\widetilde{B_H}))$. 
	\item\label{lattices_proof_item3} 
	The additive subgroup $\langle \widetilde{B_H}\rangle=\ZZ\widetilde{\beta}_1+\cdots+\ZZ\widetilde{\beta}_m\subseteq\widetilde{\hg}^*$ is discrete. 
\end{enumerate}
Here $\widetilde{H}$ is an abelian (simply connected) Lie group, hence one easily obtains that Assertions  \eqref{lattices_proof_item1}--\eqref{lattices_proof_item3} are equivalent by \cite[Ch. VII, \S 1, no. 5]{Bo74}. 
Moreover, as we already noted at the beginning of the present proof, Assertions \eqref{lattices_proof_item2}--\eqref{lattices_proof_item3} are equivalent by Lemma~\ref{dic0}. 

Therefore Assertions \eqref{lattices_item1}--\eqref{lattices_item3} are equivalent. 
\end{proof}

\begin{proof}[Proof of Theorem~\ref{ArCuOu16_Th3.6}]
Let $v\in \Vc\setminus\{0\}$. 
Restricting the representation $\pi\colon G\to\End_{\RR}(\Vc)$ 
to its invariant subspace $\bigoplus\limits_{j\in\supp\,v}\Vc_j$, 
we may assume $v\in X$, hence 
$v=v_1+\cdots+v_m\in X$ with $v_j\in\Vc_j\setminus\{0\}$ for $j=1,\dots,m$.	
Denote $w:=q(v)\in W$ and its corresponding isotropy group $G_{\widetilde{\nu}}(w)=\{g\in G\mid \widetilde{\nu}(g,w)=w\}$. 
We actually have that
\begin{equation}\label{ArCuOu16_Th3.6_proof_eq1}
G_{\widetilde{\nu}}(w)=G_\nu(v).
\end{equation} 
In fact, for arbitrary $g\in G$, 
$$\nu(g)v=v\iff q(\nu(g)v)=q(v)\iff \widetilde{\nu}(g,q(v))=q(v)
\iff g\in G_{\widetilde{\nu}}(w)$$
since  the mapping $q\vert_{\nu(G)v}$ is injective by Lemma~\ref{18March2019} and that the diagram from that lemma is commutative. 

Recall from Remark~\ref{enupi}\eqref{enupi_item2} that $\nu\colon G\to\End(\Vc)$ is a unipotent representation, hence $\nu(G)v$ is a locally compact subset of $\Vc$. 
(See for instance \cite[Th. 3.1.4]{CG90}.) 
This implies by Lemma~\ref{18March2019} that the subset $G.w\subseteq W$ is locally compact.  
Using \eqref{ArCuOu16_Th3.6_proof_eq1} and the commutative diagram from Lemma~\ref{18March2019}, it then follows by Lemma~\ref{ACDO3.1_gen}\eqref{ACDO3.1_gen_item4} that one has 
$$\pi(G)v\text{ locally compact }\subseteq \Vc
\iff  
\pi(G_\nu(v))v\text{ locally compact }\subseteq\Vc.$$ 
Thus, to complete the proof that Assertions \eqref{ArCuOu16_Th3.6_item1}--\eqref{ArCuOu16_Th3.6_item2} are equivalent, it suffices to show the following: 
\begin{equation}\label{ArCuOu16_Th3.6_proof_eq2}
\pi(G_\nu(v))v\text{ locally compact }\subseteq\Vc\iff 
E_0(G)\text{ closed }\subseteq Z.
\end{equation}
To this end we recall that 
$$G_\nu(v)=\{g\in G\mid\nu(g)v=v\}=\{g\in G\mid\nu(g)v_j=v_j\text{ for }j=1,\dots,m\}$$
hence  
$$(\forall g\in G_\nu(v))\quad 
\pi(g)v=E(g)\nu(g)v
=\sum_{j=1}^m E(g)\nu(g)v_j
=\sum_{j=1}^m E(g)v_j
=\sum_{j=1}^m \chi_j(g)v_j.
$$
This easily implies that the mapping 
$$\pi(G_\nu(v))v\to E_0(G_\nu(v)),\quad 
\pi(g)v\mapsto (\chi_1(g),\dots,\chi_m(g))=E_0(g)$$
is a homeomorphism, and then $\pi(G_\nu(v))v$ is a locally compact subset of $\Vc$ if and only if $E_0(G_\nu(v))$ is a locally compact subset of $Z$. 
This is further equivalent to the condition that $E_0(G_\nu(v))$ be a locally closed subset of $Z$ by \cite[Lemma 1.26]{Wi07}. 
Since $E_0(G_\nu(v))$ is actually a subgroup of $Z$, 
and any subgroup of a topological group is locally closed if and only if it is closed (by \cite[Ch. I, Prop. 2.1]{Ho65}), the proof of \eqref{ArCuOu16_Th3.6_proof_eq2} is complete. 

Assertions \eqref{ArCuOu16_Th3.6_item2}--\eqref{ArCuOu16_Th3.6_item3} are equivalent by Lemma~\ref{lattices}. This completes the proof. 
\end{proof}

Theorem~\ref{15March2019} below is a generalization of \cite[Cor. 5.6]{ArCuOu16} 
from abelian to general nilpotent Lie groups. 
We use the following notation. 

\begin{notation}\label{pre_15March2019}
\normalfont 
Let $k_0:=\min\limits_{v\in\Vc}\dim\gg_\nu(v)$, 
  $\Vc_\gen:=\{v\in\Vc\mid\dim\gg_\nu(v)=k_0\}$.
Then the set $\Vc_\gen$ is an open dense $\pi(G)$-invariant subset of $\Vc$ and one has $\Vc_\gen\subseteq X$.
We also consider the set 
$$\Gamma:=\{v\in\Vc\mid \pi(G)v\text{ locally compact }\subseteq\Vc\}$$
of the regular points  in $\Vc$.
\end{notation}

\begin{theorem}\label{15March2019}
	Let $G$ be a nilpotent Lie group and 
	$\pi\colon G\to\End_{\RR}(\Vc)$  a continuous representation on the finite-dimensional real vector space~$\Vc$. 
If $\mathfrak{k}\subseteq\gg$ is a linear subspace  
for which the set 
\begin{equation}\label{15March2019_eq1}
\Vc_\kg:=\{v\in\Vc\mid\gg_\nu(v)\dotplus\kg=\gg\}
\end{equation}
is a dense open subset of $\Vc_\gen$   
then exactly one of the following cases occurs: 
\begin{enumerate}[{\rm(i)}]
	\item\label{15March2019_item1} The set $\Int(\Vc_\kg\cap\Gamma)$ is a dense open subset of $\Vc_\kg$.  
	\item\label{15March2019_item2} The set $\Vc_\kg\setminus\Gamma$ is  dense in $\Vc_\kg$, and either $\Int(\Vc_\kg\setminus\Gamma)=\emptyset$ or $\Int(\Vc_\kg\setminus\Gamma)$ is dense in~$\Vc_\kg$.   
\end{enumerate}
\end{theorem}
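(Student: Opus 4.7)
The plan is to reduce the dichotomy for $\Vc_\kg\cap\Gamma$ to the general dichotomy of Lemma~\ref{dic3}, applied to a suitable family of rational $\gg^*$-valued functions on $\Vc_\kg$ that encodes the regularity criterion of Theorem~\ref{ArCuOu16_Th3.6}.

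First I would observe that $\Vc_\kg\subseteq\Vc_\gen\subseteq X$, so every $v\in\Vc_\kg$ has $\supp v=\{1,\dots,m\}$. By Theorem~\ref{ArCuOu16_Th3.6} together with Remark~\ref{dic_finite}, membership $v\in\Gamma$ is equivalent to the closedness (equivalently, discreteness) in $\gg_\nu(v)^*$ of the finitely generated subgroup $\langle\beta_1|_{\gg_\nu(v)},\dots,\beta_m|_{\gg_\nu(v)}\rangle$. The inconvenience is that this condition lives in the $v$-varying dual space $\gg_\nu(v)^*$.

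Next I would transport everything into the fixed space $\gg^*$. For $v\in\Vc_\kg$, the direct sum $\gg=\gg_\nu(v)\dotplus\kg$ gives an oblique projection $E^v\colon\gg\to\gg$ onto $\gg_\nu(v)$ along $\kg$, and the assignment $\eta\mapsto\eta\circ E^v$ is a linear isomorphism of $\gg_\nu(v)^*$ onto $\kg^\perp:=\{\xi\in\gg^*\mid\xi|_\kg=0\}$. Define
$$f_j\colon \Vc_\kg\to\gg^*,\qquad f_j(v):=\beta_j\circ E^v\quad(j=1,\dots,m).$$
Under this isomorphism $f_j(v)$ corresponds to $\beta_j|_{\gg_\nu(v)}$; since a linear isomorphism preserves closedness of finitely generated subgroups, Remark~\ref{dic_finite} yields
$$\Vc_\kg\cap\Gamma=\{v\in\Vc_\kg\mid \langle f_1(v),\dots,f_m(v)\rangle\text{ is closed in }\gg^*\}.$$

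The key technical step is the rationality of the $f_j$'s. I would define $T\colon\Vc\to\Bc(\gg,\Vc)$ by $T(v)x:=\de\nu(x)v$, which is linear in $v$ and thus rational. Since $\nu$ is unipotent and $G_\nu(v)$ is connected by Remark~\ref{enupi}\eqref{enupi_item2}, one has $\Ker T(v)=\gg_\nu(v)$, and by the very definition of $\Vc_\kg$ this kernel lies in $\Gr_\kg(\gg)$. Applying Proposition~\ref{norm0} with $\Ug=\gg$, $\Ug_0=\kg$ and this $T$, I conclude that $v\mapsto E_\kg(\gg_\nu(v))\in\Bc(\gg,\kg)$ is rational on $\Vc_\kg$; hence so is $E^v=\id_\gg-E_\kg(\gg_\nu(v))$, and therefore so is each $f_j=\beta_j\circ E^{\,\cdot}$.

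To finish, $\Vc_\kg$ is open in $\Vc$ (it is open in $\Vc_\gen$, which in turn is open in $\Vc$ by Notation~\ref{pre_15March2019}), so Lemma~\ref{dic3} applies with $\Xc=\Vc$, $\Yc=\gg^*$, $\D=\Vc_\kg$ and the rational functions $f_1,\dots,f_m$. The set $A$ of that lemma coincides with $\Vc_\kg\cap\Gamma$ by the displayed identity above, and the two mutually exclusive alternatives of Lemma~\ref{dic3} translate directly into alternatives \eqref{15March2019_item1} and \eqref{15March2019_item2} of the theorem. The main obstacle I anticipate is precisely the rationality argument for $E^v$, which crucially depends on Proposition~\ref{norm0} (itself built on the Moore--Penrose formulas in Lemma~\ref{MP}) and on the hypothesis that $\kg$ is a \emph{fixed} algebraic complement to $\gg_\nu(v)$ throughout $\Vc_\kg$; everything else is bookkeeping.
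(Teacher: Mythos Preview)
Your proof is correct and follows the same route as the paper: rationality of the oblique projection onto $\gg_\nu(v)$ along $\kg$ via Proposition~\ref{norm0}, then transport of the discreteness criterion of Theorem~\ref{ArCuOu16_Th3.6} to a fixed target space so that Lemma~\ref{dic3} applies. The only cosmetic difference is that you push the functionals $\beta_j|_{\gg_\nu(v)}$ into the fixed subspace $\kg^\perp\subseteq\gg^*$ via $\eta\mapsto\eta\circ E^v$, whereas the paper fixes a base point $v_0\in\Vc_\kg$ and uses the isomorphism $\Psi(v)|_{\gg_\nu(v_0)}\colon\gg_\nu(v_0)\to\gg_\nu(v)$ to land in $\gg_\nu(v_0)^*$; your choice is arguably cleaner since it avoids selecting $v_0$ and the preliminary case split on $\Vc_\kg\cap\Gamma=\emptyset$.
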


\begin{proof}
If $\Vc_\kg\cap\Gamma=\emptyset$, then \eqref{15March2019_item2} clearly holds true. 

Now let us assume $\Vc_\kg\cap\Gamma\ne\emptyset$ and select $v_0\in\Vc_\kg\cap\Gamma$. 
The proof proceeds in several steps. 
Recall that for every linear subspace $\hg\in\Gr_\kg(\gg)$ we consider $E_\kg(\hg)\colon \gg\to\gg$ its corresponding oblique projection onto~$\hg$, 
corresponding to the direct sum decomposition $\gg=\hg\dotplus\kg$. 

Step~1. The function $\Psi\colon\Vc_\kg\to\Bc(\gg)$, $\Psi(v):=E_\kg(\gg_\nu(v))$ is rational. 

This follows by Proposition~\ref{norm0}, for $T\colon \Vc_\kg\to\Bc(\gg,\Vc)$, 
$T(v):=\de\nu(\cdot)v$, which satisfies $\Ker T(v)=\gg_\nu(v)\in\Gr_\kg(\gg)$ for all $v\in\Vc_\kg$. 

Step~2. For every $j=1,\dots,m$ the function 
$$f_j\colon \Vc_\kg\to\gg_\nu(v_0)^*,\quad 
f_j(v):=\beta_j\circ \Psi(v)\vert_{\gg_\nu(v_0)}$$ 
is rational and one has 
\begin{equation}\label{15March2019_proof_eq1}
\Vc_\kg\cap\Gamma=\{v\in\Vc_\kg\mid \langle f_1(v),\dots,f_m(v)\rangle \text{ discrete }\subseteq\gg_\nu(v_0)^*\}.
\end{equation}
In fact, it directly follows by Step~1 that the functions $f_1,\dots,f_m\colon \Vc_\kg\to\gg_\nu(v_0)^*$ are rational. 
Moreover, for arbitrary $v\in\Vc_\gen$ one has $\kg\cap \gg_\nu(v_0)=\{0\}$. 
It is then straightforward to check that the linear operator 
$$\Psi(v)\vert_{\gg_\nu(v_0)}\colon\gg_\nu(v_0)\to\gg_\nu(v)$$
is injective hence invertible since $\dim\gg_\nu(v)=\dim\gg_\nu(v_0)$. 
One then obtains for arbitrary $v\in\Vc_\kg$ 
\begin{align}\langle \beta_1\vert_{\gg_\nu(v)},\dots, & \beta_m\vert_{\gg_\nu(v)}\rangle\text{ discrete }\subseteq\gg_\nu(v)^*
\nonumber \\
& \iff 
\langle \beta_1\circ \Psi(v)\vert_{\gg_\nu(v_0)},\dots,\beta_m\circ \Psi(v)\vert_{\gg_\nu(v_0)}\rangle\text{ discrete }\subseteq\gg_\nu(v_0)^* \nonumber \\
& \iff 
\langle f_1(v),\dots,f_m(v)\rangle\text{ discrete }\subseteq\gg_\nu(v_0)^* \nonumber 
\end{align}
and now \eqref{15March2019_proof_eq1} follows by Theorem~\ref{ArCuOu16_Th3.6} 
since $\Vc_\kg\subseteq\Vc_\gen\subseteq X$. 
(See Notation~\ref{pre_15March2019} and~\ref{not_X}.)

Step~3. 
The above Step~2 shows that Lemma~\ref{dic3} is applicable with $A=\Vc_\kg\cap\Gamma$, and we then obtain that either $\Vc_\kg\cap\Gamma$ is an open dense subset of $\Vc_\kg$, or $\Vc_\kg\setminus\Gamma$ ($=\Vc_\kg\setminus A$) is a dense subset of $\Vc_\kg$.   
This completes the proof.  
\end{proof}

\begin{corollary}\label{main_reg}
In the conditions and notation of Theorem~\ref{15March2019},
exactly one of the following cases can occur: 
	\begin{enumerate}[{\rm(i)}]
		\item The set $\Int \Gamma$ is dense in $\Vc$. 
		\item The set $\Vc\setminus \Gamma$ is dense in~$\Vc$, 
		and either $\Int(\Vc\setminus\Gamma)=\emptyset$ or $\Int(\Vc\setminus\Gamma)$ is dense in~$\Vc$. 
	\end{enumerate} 
\end{corollary}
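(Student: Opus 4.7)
The plan is to derive the global dichotomy from a single application of Theorem~\ref{15March2019}, after exhibiting one complement $\kg \subseteq \gg$ for which $\Vc_\kg$ is dense open in all of~$\Vc$. The three-way conclusion on $\Vc_\kg$ then transfers to $\Vc$ by a formal open-dense argument. The only non-trivial step is the existence of a global $\kg$, and this will be produced by the Pedersen stratification of Example~\ref{strat} applied to the unipotent representation~$\nu$ (unipotent by Remark~\ref{enupi}\eqref{enupi_item2}).

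Concretely, I fix a Jordan--H\"older sequence $\Fg_\bullet$ of $\gg$ together with a selection $x_j \in \Fg_j \setminus \Fg_{j-1}$ for $j=1,\dots,m$, and apply Example~\ref{strat} to~$\nu$. This produces a finite partition $\Vc = \bigsqcup_{\varepsilon \in {\boldsymbol\Ec}}\Omega_\varepsilon$ whose minimum stratum $\Omega_{\varepsilon_0}$ is dense open in $\Vc$, and on which $\jump_{\Fg_\bullet}(\gg_\nu(v))$ equals a constant subset $e_0 \subseteq \{1,\dots,m\}$. I set $\kg := \spa\{x_j \mid j \in e_0\}$. By~\eqref{strat_eq1} one has $\gg = \gg_\nu(v)\dotplus\kg$ for every $v \in \Omega_{\varepsilon_0}$, so $\Omega_{\varepsilon_0} \subseteq \Vc_\kg$; conversely any $v \in \Vc_\kg$ satisfies $\dim \gg_\nu(v) = m - |e_0| = k_0$ (the latter equality holding because $\Omega_{\varepsilon_0}$ already realizes the minimum), so $\Vc_\kg \subseteq \Vc_\gen$. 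On $\Vc_\gen$, where $\dim\gg_\nu(v)$ is locally constant, the map $v \mapsto \gg_\nu(v)$ is continuous into $\Gr(\gg)$, and $\Gr_\kg(\gg)$ is open in $\Gr(\gg)$; thus $\Vc_\kg$ is open in $\Vc_\gen$, and therefore open in~$\Vc$. Containing the dense set $\Omega_{\varepsilon_0}$, it is also dense in~$\Vc$ and \emph{a fortiori} in~$\Vc_\gen$, so Theorem~\ref{15March2019} applies and yields the three-way dichotomy for $\Vc_\kg \cap \Gamma$ and $\Vc_\kg \setminus \Gamma$ inside $\Vc_\kg$.

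The transfer from $\Vc_\kg$ to $\Vc$ is now formal. Since $\Vc_\kg$ is open in~$\Vc$, every open subset of $\Vc_\kg$ is open in~$\Vc$; since $\Vc_\kg$ is dense in~$\Vc$, a subset of $\Vc_\kg$ is dense in $\Vc_\kg$ if and only if it is dense in~$\Vc$, and any nonempty open subset of $\Vc$ meets $\Vc_\kg$ in a nonempty open subset. Applying these remarks with $S := \Gamma$ in case~\eqref{15March2019_item1} of Theorem~\ref{15March2019} gives that $\Int \Gamma \supseteq \Int(\Vc_\kg \cap \Gamma)$ is dense in~$\Vc$, which is case~(i) of the corollary. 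In case~\eqref{15March2019_item2}, density of $\Vc_\kg \setminus \Gamma$ in $\Vc_\kg$ yields density of $\Vc \setminus \Gamma$ in~$\Vc$; emptiness of $\Int(\Vc_\kg \setminus \Gamma)$ forces emptiness of $\Int(\Vc \setminus \Gamma)$, since any nonempty open subset of $\Vc \setminus \Gamma$ would meet $\Vc_\kg$ in a nonempty open subset of $\Vc_\kg \setminus \Gamma$; and density of $\Int(\Vc_\kg \setminus \Gamma)$ in $\Vc_\kg$ yields density of $\Int(\Vc \setminus \Gamma) \supseteq \Int(\Vc_\kg \setminus \Gamma)$ in~$\Vc$. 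The only substantive obstacle is therefore the construction of the global~$\kg$; this is exactly what the stratification of Example~\ref{strat} is designed to supply.
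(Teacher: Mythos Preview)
Your argument is correct and follows the same route as the paper: use the Pedersen stratification of Example~\ref{strat} (applied to the unipotent representation~$\nu$) to produce a complement~$\kg$ for which $\Vc_\kg$ is dense open in~$\Vc_\gen$, invoke Theorem~\ref{15March2019}, and then pass from $\Vc_\kg$ to~$\Vc$ by the standard open--dense transfer. The paper compresses all of this into two sentences, whereas you spell out the verification that $\Omega_{\varepsilon_0}\subseteq\Vc_\kg\subseteq\Vc_\gen$, the openness of $\Vc_\kg$, and the three-way transfer; these are exactly the details the paper suppresses under ``one directly obtains the assertion.''
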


\begin{proof}
It follows by Example~\ref{strat}\eqref{strat_item2} and \eqref{strat_eq1} that 
there exists a linear subspace $\kg\subseteq\gg$ with $\dim(\gg/\kg)=k_0$ 
for which the set 
$\Vc_\kg$ defined in~\eqref{15March2019_eq1} 
is a dense open subset of $\Vc_\gen$.  
Theorem~\ref{15March2019} is then applicable, and one directly obtains the assertion, since 
$\Vc_\gen$ is in turn a dense open subset of~$\Vc$. 
\end{proof}

\begin{remark}\label{correm}
	\normalfont
	The above corollary implies that $\Int\Gamma$ is not dense in $\Vc$ if and only if $\Int\Gamma=\emptyset$.
\end{remark}

\begin{corollary}\label{27August2019}
	If the group $G$ is abelian, then exactly one of the following cases can occur: 
	\begin{enumerate}[{\rm(i)}]
		\item The set $\Int \Gamma$ is dense in $\Vc$. 
		\item The transformation-group $C^*$-algebra $G\ltimes \Cc_0(\Vc)$ is antiliminary. 
	\end{enumerate} 
\end{corollary}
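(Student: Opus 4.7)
The plan is to combine the trichotomy from Corollary~\ref{main_reg} with the description of the largest type~I ideal of a transformation-group $C^*$-algebra provided by Lemma~\ref{anHuefWilliams}. Since $G$ is an abelian, hence amenable, Lie group, and both $G$ and $\Vc$ are second countable, Lemma~\ref{anHuefWilliams} is applicable to the continuous linear action $G\times\Vc\to\Vc$ induced by $\pi$: the largest type~I ideal of $G\ltimes\Cc_0(\Vc)$ equals $G\ltimes\Cc_0(\Vc_0)$, where
\[
\Vc_0=\{v\in\Vc\mid v\text{ has a neighborhood }U_v\text{ such that }\pi(G)u\text{ is locally compact for every }u\in U_v\}.
\]

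First, I would observe the purely definitional identification $\Vc_0=\Int\Gamma$: indeed, $v\in\Vc_0$ means exactly that some open neighborhood of $v$ is contained in $\Gamma$. Consequently, $G\ltimes\Cc_0(\Vc)$ is antiliminary (its largest type~I ideal vanishes) if and only if $\Int\Gamma=\emptyset$.

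Second, because an abelian Lie group is in particular nilpotent, Corollary~\ref{main_reg} applies; together with Remark~\ref{correm} it yields the clean dichotomy that either $\Int\Gamma$ is dense in $\Vc$, or $\Int\Gamma=\emptyset$. Splicing the two steps gives the desired conclusion. Mutual exclusivity of the two alternatives is clear: if $\Int\Gamma$ is dense in $\Vc$ then $\Int\Gamma\ne\emptyset$, so $G\ltimes\Cc_0(\Int\Gamma)$ is a nonzero type~I ideal of $G\ltimes\Cc_0(\Vc)$, preventing antiliminarity. There is no substantive obstacle: the heavy lifting is already contained in Corollary~\ref{main_reg} and Lemma~\ref{anHuefWilliams}, and the only mild point is verifying that the set labeled $X_0$ in the latter coincides with $\Int\Gamma$, which is immediate from the definitions.
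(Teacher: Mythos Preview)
Your proof is correct and follows essentially the same approach as the paper: both identify the largest type~I ideal of $G\ltimes\Cc_0(\Vc)$ with $G\ltimes\Cc_0(\Int\Gamma)$ via Lemma~\ref{anHuefWilliams}, and then invoke Corollary~\ref{main_reg} (or its reformulation in Remark~\ref{correm}) to obtain the dichotomy $\Int\Gamma$ dense versus $\Int\Gamma=\emptyset$. Your explicit verification that $\Vc_0=\Int\Gamma$ is a nice touch that the paper leaves implicit.
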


\begin{proof}
The transformation-group $C^*$-algebra $G\ltimes \Cc_0(\Vc)$ is not antiliminary if and only if it has at least one nonzero postliminary ideal, 
that is, if and only if its largest postliminary ideal is nonzero. 
Since the group $G$ is abelian, the largest postliminary ideal of  $G\ltimes \Cc_0(\Vc)$ is $G\ltimes \Cc_0(\Int\Gamma)$  by Lemma~\ref{anHuefWilliams}.
Therefore $G\ltimes \Cc_0(\Vc)$ is not antiliminary if and only if $G\ltimes \Cc_0(\Int\Gamma)\ne\{0\}$, which is further equivalent to $\Int\Gamma\ne\emptyset$. 
Finally, by Corollary~\ref{main_reg}, one has $\Int\Gamma\ne\emptyset$ if and only if $\Int\Gamma$ is dense in~$\Vc$. 
\end{proof}

The assertions of  Corollary~\ref{27August2019} do not carry over directly when $G$ is a noncommutative nilpotent Lie group,  as seen from the following proposition. 

\begin{proposition}\label{nonabelian}
Let $G$ be a nilpotent Lie group. 
If $G$ is not commutative, then there exists a continuous representation $\pi\colon G\to\End_\CC(\CC^2)$ for which $\Int\Gamma=\emptyset$ 
while the $C^*$-algebra $G\ltimes_\pi\Cc_0(\CC^2)$ is neither antiliminary nor postliminary. 
\end{proposition}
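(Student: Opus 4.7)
\emph{Construction.} Since $G$ is non-abelian nilpotent, the linear subspace $[\gg,\gg]+Z(\gg)$ is a proper subspace of $\gg$. Indeed, if $[\gg,\gg]+Z(\gg)=\gg$, then the nilpotent Lie algebra $\gg/Z(\gg)$ would coincide with its own derived subalgebra, forcing $\gg/Z(\gg)=\{0\}$ and hence $\gg$ abelian, a contradiction. Pick $X_0\in\gg\setminus([\gg,\gg]+Z(\gg))$ and a linear functional $\ell\in([\gg,\gg]+Z(\gg))^\perp\subseteq\gg^*$ with $\ell(X_0)=1$, and fix $\theta\in\RR\setminus\QQ$. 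Define
\[
\pi\colon G\to\End_\CC(\CC^2),\qquad \pi(g)(z_1,z_2):=\bigl(\ee^{\ie\ell(\log g)}z_1,\ee^{\ie\theta\ell(\log g)}z_2\bigr),
\]
a direct sum of two unitary characters of $G$.

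\emph{Verification that $\Int\Gamma=\emptyset$.} On the open dense subset $\{(z_1,z_2):z_1z_2\neq 0\}\subseteq\CC^2$, every point $v$ has $\supp v=\{1,2\}$, and since each $\Vc_j\simeq\CC$ is one-dimensional over $\CC$ one has $E=\pi$, $\nu=\id$, and therefore $\gg_\nu(v)=\gg$. The functionals of Theorem~\ref{ArCuOu16_Th3.6} are $\beta_1=\ell$ and $\beta_2=\theta\ell$, and the subgroup $\langle\ell,\theta\ell\rangle=(\ZZ+\theta\ZZ)\ell\subseteq\gg^*$ is not discrete since $\theta\notin\QQ$. Theorem~\ref{ArCuOu16_Th3.6} then shows that no such $v$ is regular, so $\Int\Gamma=\emptyset$.

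\emph{Structure of the crossed product.} Let $N:=\exp(\ker\ell)$, a closed normal subgroup of $G$ of codimension~$1$ on which $\pi$ is trivial, and $R:=\exp(\RR X_0)\cong\RR$; then every $g\in G$ factors uniquely as $g=\exp(sX_0)\exp(Y)$ with $s=\ell(\log g)$ and $Y\in\ker\ell$ (using that the BCH corrections lie in $[\gg,\gg]\subseteq\ker\ell$), so $G\cong R\ltimes N$. Since $N$ acts trivially on $\CC^2$ via $\pi$,
\[
G\ltimes_\pi\Cc_0(\CC^2)\cong R\ltimes\bigl(C^*(N)\otimes\Cc_0(\CC^2)\bigr),
\]
where $R$ acts on $C^*(N)$ by the automorphisms dual to conjugation in $G$, and on $\Cc_0(\CC^2)$ by the Mautner-type representation $t\mapsto(\ee^{\ie t},\ee^{\ie\theta t})$. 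Moreover, $\ad X_0$ is an \emph{outer} derivation of $\ker\ell$: if there existed $c\in C_\gg(\ker\ell)$ with $\ell(c)\neq 0$, then $\gg=\RR c\dotplus\ker\ell$ and $[c,\gg]=[c,\RR c]+[c,\ker\ell]=0$, giving $c\in Z(\gg)\subseteq\ker\ell$, contradicting $\ell(c)\neq 0$.

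\emph{Conclusion and main obstacle.} Kirillov theory applied to the nilpotent group $N$, combined with the outer-derivation property of $\ad X_0$, yields an $R$-invariant open dense subset $U\subseteq\mathrm{Prim}(C^*(N))\cong(\ker\ell)^*/N$ on which the $R$-action is free and proper. When $\ker\ell$ is abelian (as is the case for $G=H_3$) this is explicit: $\mathrm{Prim}(C^*(N))=(\ker\ell)^*$, $R$ acts by the non-zero linear shear dual to $\ad X_0$, and $U$ is the complement of its fixed hyperplane. The general case follows by tracking the shear induced by $\ad X_0$ on the generic stratum of the coadjoint orbit space. The ideal $R\ltimes(C^*(N)|_U\otimes\Cc_0(\CC^2))$ is then type~I by standard Morita equivalence for free proper actions, so $G\ltimes_\pi\Cc_0(\CC^2)$ is \emph{not antiliminary}. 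The corresponding quotient is isomorphic to $\Cc_0(F)\otimes\bigl(R\ltimes\Cc_0(\CC^2)\bigr)$, where $F:=\mathrm{Prim}(C^*(N))\setminus U$ is the closed $R$-fixed complement; its second tensor factor is the classical Mautner algebra, antiliminary by Corollary~\ref{27August2019} applied to the abelian group $R$, and tensoring with any nonzero $C^*$-algebra preserves antiliminarity. Hence this quotient is not type~I, so $G\ltimes_\pi\Cc_0(\CC^2)$ is \emph{not postliminary} either. The main technical obstacle is the precise construction of $U$ with the free-proper property in the general non-abelian case, which demands a careful handling of Kirillov's orbit parametrization together with the outer-derivation property of $\ad X_0$.
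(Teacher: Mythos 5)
Your construction of $\pi$ and your verification that $\Int\Gamma=\emptyset$ match the paper's proof up to notation. The divergence, and the source of the gap you yourself flag, is in how you handle the $C^*$-algebraic part: you decompose $G\cong R\ltimes N$ with $N=\exp(\ker\ell)$, write $G\ltimes_\pi\Cc_0(\CC^2)\cong R\ltimes\bigl(C^*(N)\otimes\Cc_0(\CC^2)\bigr)$, and then try to analyze the $R$-action on $\mathrm{Prim}(C^*(N))$ via Kirillov theory. This leads you to claim (and then admit you have not fully justified) the existence of an $R$-invariant open dense $U\subseteq\mathrm{Prim}(C^*(N))$ on which the action is free and proper, plus the identification of the quotient with $\Cc_0(F)\otimes(R\ltimes\Cc_0(\CC^2))$. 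Beyond the incompleteness you note, the latter identification is also problematic in general: when $\ker\ell$ is not abelian, the closed complement $F$ need not consist of characters, so $C^*(N)|_F$ need not be commutative, and the $R$-action on $F$ need not be trivial; your reasoning is explicitly carried out only for $G=H_3$.

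The paper sidesteps all of this with a cleaner reduction that you may find instructive. Instead of keeping $N$ inside the acting group and confronting $C^*(N)$, one Fourier-transforms the abelian-normal part into the \emph{space}: choosing $\gg_0\supseteq[\gg,\gg]+\zg$ complementary to $\RR X_1$, since $G_0:=(\gg_0,\cdot)$ acts trivially on $\CC^2$, Williams' isomorphism (\cite[Prop.~3.11]{Wi07}) gives
\[
G\ltimes_\pi\Cc_0(\CC^2)\;\simeq\;\RR X_1\ltimes_\rho\Cc_0\bigl(\gg_0\dotplus\CC^2\bigr),\qquad \rho(tX_1)=\exp\bigl(t(\ad_\gg X_1)|_{\gg_0}\bigr)\oplus\pi(tX_1).
\]
Now the acting group is the abelian group $\RR$, so the antiliminarity half of the dichotomy follows directly from Corollary~\ref{27August2019}, and the postliminarity half from \cite[Thms.~6.2 and 8.43]{Wi07}, once one checks two things that are elementary: on the generic set $\{(Y,z):Y\notin\Ker\ad_\gg X_1|_{\gg_0}\}$ orbits escape to infinity (using $\ad_\gg X_1|_{\gg_0}\ne0$ and the exponential of a nonzero nilpotent), hence are regular by Lemma~\ref{ACDO2.3_new}\eqref{ACDO2.3_new_item5}, giving $\Int\Gamma_\rho$ dense; while the Mautner phenomenon on $\{0\}\dotplus\CC^2$ gives $\Gamma_\rho\ne\gg_0\dotplus\CC^2$. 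There is nothing to prove about $\mathrm{Prim}(C^*(N))$, free/proper actions, or Kirillov strata. So the gap you identified is genuine, and the fix is not to fill it but to avoid it: absorb the nilpotent normal factor into the commutative fiber algebra and apply the abelian machinery directly.
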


\begin{proof}
Let $\gg$ be the Lie algebra of $G$ and $\zg$ be the center of $\gg$.  
Since $G$ is simply connected, we may assume $G=(\gg,\cdot)$, where the group operation $\cdot$ is given by the Baker-Campbell-Hausdorff formula. 
The Lie algebra $\gg$ is nilpotent and $\gg^{(1)}:=[\gg,\gg]\ne\{0\}$, 
and it then easily follows that 
$\gg^{(1)}+\zg\subsetneqq\gg$, 
hence we may select $X_1\in\gg\setminus(\gg^{(1)}+\zg)$,
and a linear subspace $\gg_0\subseteq\gg$ with 
$\gg^{(1)}+\zg\subseteq\gg_0$
and $\gg=\RR X_1\dotplus\gg_0$. 
Then $\gg_0$ is an ideal of $\gg$.  
In addition, since $\zg\subseteq\gg_0$ and $X_0\not\in\gg_0$, we have  $X_0\not\in\zg$, 
hence $[X_1,\gg_0]\ne\{0\}$. 

Now $G_0:=(\gg_0,\cdot)$ is a closed normal subgroup of $G$ and 
we have the semidirect product decomposition $G=G_0 \rtimes \RR X_1$, 
using the diffeomorphism 
 $\gg_0\times \RR \to \gg$, $(Y, t)\mapsto  Y \cdot tX_1$. 
We further select any $\theta\in\RR\setminus\QQ$ and we show that 
$$\pi\colon G\to\End_\CC(\CC^2),\quad 
\pi( Y \cdot tX_1):=\begin{pmatrix}
\ee^{\ie t} & 0 \\0 & \ee^{\ie \theta t}
\end{pmatrix}
$$
is a representation with the stated properties. 
Since $G_0$ is a normal subgroup of $G$, it is clear that $\pi$ is a group representation. 
Moreover, since $\theta\in\RR\setminus\QQ$, 
it is easily seen that $\Gamma=\Bigl\{ \begin{pmatrix}
z_1 \\ z_2
\end{pmatrix}\in\CC^2 \mid z_1 z_2=0\Bigr\}$ 
hence $\Int\Gamma=\emptyset$. 
Moreover, defining 
$$\rho\colon \RR X_1\to \End_\RR(\gg_0\dotplus\CC^2),\quad 
\rho(tX_1):=\begin{pmatrix}
\exp(t(\ad_{\gg}X_1)\vert_{\gg_0}) & 0 \\0 & \pi(tX_1)
\end{pmatrix}
$$
and using the semidirect product decomposition $G= G_0\rtimes \RR X_1$, 
we obtain a natural $*$-isomorphism 
$G\ltimes_\pi\Cc_0(\CC^2)\simeq \RR X_1\ltimes_\rho\Cc_0(\gg_0\dotplus\CC^2)$ since $G_0$ acts trivially on $\CC^2$ via~$\pi$. 
(See for instance \cite[Prop. 3.11]{Wi07}.) 
If we denote by $\Gamma_\rho$ the set of all points in $\gg_0\dotplus\CC^2$ whose corresponding orbits via the action~$\rho$ are regular, 
then we will show that $\Int\Gamma_\rho$ is a dense subset of $\gg_0\dotplus\CC^2$ while $\Gamma_\rho\ne\gg_0\dotplus\CC^2$, 
and this will imply that the $C^*$-algebra  $\RR X_1\ltimes_\rho\Cc_0(\gg_0\dotplus\CC^2)$ is neither postliminary 
(by \cite[Thms. 6.2 and 8.43]{Wi07}) nor antiliminary 
(by Corollary~\ref{27August2019}).  

In fact $\Gamma_\rho\cap\CC^2=\Gamma\ne\CC^2$, hence $\Gamma_\rho\ne\gg_0\dotplus\CC^2$. 
On the other hand, 
the nilpotent operator $(\ad_{\gg}X_1)\vert_{\gg_0}$ is different from zero since $[X_1,\gg_0]\ne\{0\}$. 
Using the well-known formula for the exponential of a nilpotent Jordan cell,  
it follows that
if $Y\in\gg_0\setminus\Ker(\ad_{\gg}X_1)$ 
and $z\in\CC^2$ 
then 
$\lim\limits_{t\to\pm\infty}\rho(tX_1)\begin{pmatrix}
Y \\ z
\end{pmatrix}=\infty$ 
hence 
$\begin{pmatrix}
Y \\ z
\end{pmatrix}\in\Gamma_\rho$ 
by the last assertion of Lemma~\ref{ACDO2.3_new}. 
This shows that  $\Int\Gamma_\rho$ is a dense subset of $\gg_0\dotplus\CC^2$. 
\end{proof}

\begin{remark}\label{main_reg_ab}
\normalfont 
As in Setting~\ref{sett_decomp}, let $G$ be a nilpotent Lie group with  a continuous finite-dimensional representation $\pi\colon G\to\End_{\RR}(\Vc)$. 
One can then form the semidirect product of Lie groups $R:=\Vc\rtimes_\pi G$, which is a solvable Lie group with its Lie algebra  $\rg:=\Vc\rtimes_{\de\pi}\gg$ obtained as a semidirect product of Lie algebras, 
however the relation between the coadjoint orbits of $S$ and the $G$-orbits in $\Vc$ is rather involved especially if the group $G$ is nonabelian, as discussed in \cite{Ra75} and \cite{Ba98}.  
Specifically, the dual of the Lie algebra of $R$ is $\rg^*=\Vc^*\times\gg^*$, 
and the coadjoint action $\Ad_R^*\colon R\times\rg^*\to\rg^*$ is given by the formula 
\begin{equation}
\label{main_reg_ab_eq1}
(\Ad_R^*(v,g))(p,\xi)=(\pi(g)^*p,\Ad_G^*(g)\xi-\theta_p(v))
\end{equation}
for all $v\in \Vc$, $g\in G$, $p\in\Vc^*$, and $\xi\in\gg^*$, 
where $\theta_p\colon \Vc\to\gg^*$, $v\mapsto \langle\de\pi(\cdot)^*p,v\rangle$ and 
$\langle\cdot,\cdot\rangle\colon\Vc^*\times\Vc\to\RR$ is the natural duality pairing. 

Using these remarks, a version of Corollary~\ref{main_reg} 
\emph{in the special case when $G$ is abelian} studied in \cite{ArCuDaOu13} and \cite{ArCuOu16} can be alternatively obtained as follows. 
Namely, if $G$ is abelian, then $\Ad_G^*(g)\xi=\xi$ for all $g\in G$ and $\xi\in\gg^*$, hence, by \eqref{main_reg_ab_eq1}, 
\begin{equation}
\label{main_reg_ab_eq2}
(\Ad_R^*(R))(p,\xi)=\pi(G)^*p\times (\xi+\theta_p(\Vc))\subseteq\Vc^*\times\gg^* 
\text{ for all }p\in\Vc^*,\ \xi\in\gg^*.
\end{equation}
Here $\xi+\theta_p(\Vc)$ is an affine subspace, hence a closed subset 
of~$\gg^*$. 
On the other hand, it is straightforward to check that for any topological spaces $X$ and $Y$, a subset $A\subseteq X$ is locally closed if and only if $A\times Y\subseteq X\times Y$ is locally closed. 
Therefore~\eqref{main_reg_ab_eq2} implies that, for any $p\in\Vc^*$ and  $\xi\in\gg^*$, 
the $G$-orbit $\pi(G)^*p\subseteq \Vc^*$ is locally closed if and only if the coadjoint $S$-orbit $(\Ad_R^*(R))(p,\xi)\subseteq\rg^*$ is locally closed. 
On the other hand, since $R$ is a connected solvable Lie group, it follows by \cite[Ch. IV, Prop. 8.2]{Pu71} that, denoting by $E_c$ the set of all points of $\rg^*$ whose coadjoint orbits are locally closed, 
then either $E_c$ or its complement $\rg^*\setminus E_c$ have Lebesgue measure zero. 
Therefore, by the above remarks, either the set $\Gamma^*$ of points in $\Vc^*$ whose $G$-orbits are locally closed, or its complement $\Vc^*\setminus\Gamma^*$, have Lebesgue measure zero. 
This is the aforementioned version of Corollary~\ref{main_reg} for the representation $\pi^*\colon G\to\End(\Vc^*)$, $g\mapsto\pi(g^{-1})^*$ when $G$ is abelian. 

Moreover, by \eqref{main_reg_ab_eq2} again, all $G$-orbits in $\Vc^*$ are locally closed if and only if all coadjoint $R$-orbits are locally closed. 
If this is the case, then one also has: 
\begin{enumerate}[{1.}]
	\item The set $\widehat{R}\setminus\widehat{G}$ is a dense open subset of the unitary dual space $\widehat{G}$ by \cite[Lemma 4.5]{ArKS12}. 
	(We note that $G\simeq R/\Vc$, hence $\widehat{G}$ is always a closed subset of $\widehat{R}$.)
	\item The solvable Lie group $R$ is type~I by \cite[Th. V.3.4]{AuKo71}, 
	since the coadjoint $R$-orbit symplectic forms are exact, for instance by \cite[Prop. 4.4(1.)]{Ba98}. 
	Compare \cite[Prop. 4.3]{ArCuDa12} and Theorem~\ref{post-anti}\eqref{post-anti_item1} below. 
\end{enumerate}
\end{remark}

The following example shows that Corollary~\ref{27August2019} may not carry over 
even in the case of a noncommutative nilpotent Lie group that acts on $\CC^2$ in such a manner that its corresponding orbits are regular and the coadjoint orbits of the corresponding semidirect product group are locally closed. 

\begin{example}\label{Dixmier}
	\normalfont
	We consider the 3-dimensional Heisenberg group 
	$$H_3=\Bigl\{\begin{pmatrix}
	1 & a & c \\
	0 & 1 & b \\
	0 & 0 & 1
	\end{pmatrix}
	\mid a,b,c\in\RR\Bigr\}$$
	and its representation 
	$$\pi\colon H_3\to\End_\CC(\CC^2),\quad 
	\pi\begin{pmatrix}
	1 & a & c \\
	0 & 1 & b \\
	0 & 0 & 1
	\end{pmatrix} 
	=\begin{pmatrix}
	\ee^{\ie a} & 0 \\
	0 & \ee^{\ie b}
	\end{pmatrix}.
	$$
	Then the orbit of every point of $\CC^2$ is a torus of dimension 0, 1, or 2, hence $\Gamma=\CC^2$. 
	Neverteless, as we show below, the $C^*$-algebra $H_3\ltimes\CC_0(\CC^2)$ is antiliminary. 
	
	In fact $H_3\ltimes\CC_0(\CC^2)$ is $*$-isomorphic to the $C^*$-algebra of the Dixmier group $S:=\CC^2\rtimes H_3$ of \cite{Di61}. 
	If $\sg$ is the Lie algebra of $S$, 
	let $S(\xi)$ be the coadjoint isotropy group of any point $\xi\in\sg^*$, 
	$\sg(\xi)$ be the Lie algebra of $S(\xi)$, 
	and $S(\xi)_\1$ be the connected component of $S(\xi)$ with $\1\in S(\xi)_\1$. 
	Then it is known that there exists a unique Lie group morphism $\chi_\xi\colon S(\xi)_\1\to\TT$ whose derivative at $\1$ is $\ie\xi\vert_{\sg(\xi)}$, 
	and we introduce as in \cite[Def. 4.1]{Pu71} the reduced stabilizer of $\xi$, 
	which is the closed subgroup of $S(\xi)$ defined by 
	$$\overline{S(\xi)}:=\{x\in S(\xi)\mid xyx^{-1}y^{-1}\in \Ker\chi_\xi\text{ for all }y\in S(\xi)\}.$$
	We then denote 
	$$E_\infty:=\{\xi\in\sg^*\mid \text{the index of the subgroup } 
	\overline{S(\xi)}\text{ of }S(x)\text{ is infinite}\}.$$ 
	A well-known property of the Dixmier group is that the set $\sg^*\setminus E_\infty$ has its Lebesgue measure in $\sg^*$ equal to zero. 
	See for instance \cite[Ex. 4.7.2]{AC20} for a more precise result in this connection, due to M.~Duflo. 
	It then follows by \cite[Lemma 9.2]{Pu71}
	that if the von Neumann algebra $\textbf{L}(S)$ generated by the regular representation of $S$ is canonically decomposed into $w^*$-closed ideals of types I, II, and III, then 
	the corresponding type I component is equal to $\{0\}$.   
	On the other hand, the von Neumann algebra generated by the regular representation of any simply connected solvable Lie group coincides with its type~I or with its type~II component by \cite[Th. 5]{Pu71}. 
	It then follows that the von Neumann algebra $\textbf{L}(S)$ coincides with its type II component. 
	Then the regular representation of $\CC^2\rtimes H_3$ gives a faithful type II representation of $C^*(\CC^2\rtimes H_3)$, 
	and this implies that $C^*(\CC^2\rtimes H_3)$ is antiliminary. 
	(See for instance \cite[9.5.4]{Di69}.)
\end{example}

\section{Application to generalized $ax+b$-groups}\label{Sect_ax+b}

A generalized $ax+b$-group is a connected simply connected solvable Lie group having a 1-codimensional abelian ideal. 
See also Definition~\ref{def_ax+b} for an alternative description of these groups. 
We study the type~I ideals of the $C^*$-algebra of such a group, 
and our main result (Theorem~\ref{post-anti}) is a precise characterization of the generalized $ax+b$-groups whose $C^*$-algebra is antiliminary, that is, no closed 2-sided ideal is type~I. 
We also discuss the representation theoretic significance of this property (Remark~\ref{III}). 

We recall that a separable $C^*$-algebra $\Ac$ is 
type~I or postliminary if for every irreducible $*$-representation $\pi\colon \Ac\to\Bc(\Hc)$ one has $\Kc(\Hc)\subseteq\pi(\Ac)$. 
The $C^*$-algebra $\Ac$ is called antiliminary if it has no postliminary closed 2-sided ideal different from~$\{0\}$. 
(See~\cite{Di69}.)

In what follows in this section, unless otherwise mentioned, 
$\Vc$ is a finite-dim\-ens\-ional real vector space and 
$D\in \End(\Vc)$. 
We recall the following definitions. (See \cite[\S 1.4]{CW14}, and \cite{BB18} for the relevance  of these notions in the context of $C^*$-algebraic properties of $ax+b$-groups.)
We consider the $\CC$-linear extension $D\colon\Vc_{\CC}\to\Vc_{\CC}$, 
and  let $\sigma(D)$ be its spectrum.
For  $\lambda\in \sigma(D)\cap(\CC \setminus \RR)$, 
the real generalized eigenspace for $\lambda$, $\overline{\lambda}$ is the linear subspace of $\Vc$ given by 
$$ E^D(\lambda):=\{v_1, v_2\in \Vc\mid v_1+ \ie v_2 \in \Ker(D-\lambda \1)^m\},$$
where $m$ is the dimension of the largest Jordan block for $\lambda$, while 
if  $\lambda\in \sigma(D)\cap \RR$, the real generalized eigenspace for $\lambda$ is $E^D(\lambda):=\Ker(D-\lambda\1)^m$, where $m$ is again the dimension of the largest Jordan block for $\lambda$. 
For $\lambda\in\CC\setminus\sigma(D)$,  we set $E^D(\lambda):=\{0\}$.
Then we define 
$\Vc_{\pm}:=\bigoplus\limits_{\pm\Re\lambda>0}E^D(\lambda)$, 
$\Vc_{0}:=\bigoplus\limits_{\Re\lambda= 0}E^D(\lambda)$.
It follows that
$\Vc=\Vc_{-}\oplus\Vc_0\oplus\Vc_{+}$
 Define
$$ D_0:= D\vert_{\Vc_0}.$$

We start with the following lemma, included here for the sake of completeness.

\begin{lemma}\label{Diophant}
	For any set $S\subseteq\RR$ the subgroup of $(\RR,+)$ generated by $S$ is closed if and only if there exists $\theta\in\RR\setminus\{0\}$ with $S\subseteq\{r\theta\mid r\in\QQ\}$, which is further equivalent to $\dim_{\QQ}(\spa_{\QQ}(S))=1$. 
\end{lemma}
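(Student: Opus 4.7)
The plan is to reduce both equivalences to the classical classification of closed subgroups of $(\RR,+)$ (see \cite[Ch.~5, \S 1, no.~1, Cor.]{Bo74}, already used in Lemma~\ref{dic0}): every such subgroup is either $\{0\}$, all of $\RR$, or of the form $\theta\ZZ$ for some $\theta>0$. The equivalence between the existence of $\theta\in\RR\setminus\{0\}$ with $S\subseteq\{r\theta\mid r\in\QQ\}$ and the rank condition $\dim_\QQ(\spa_\QQ(S))=1$ is a direct unwinding of the definition of $\QQ$-span: the former says $\spa_\QQ(S)\subseteq\QQ\theta$, which, once $S\not\subseteq\{0\}$, forces equality and hence $\QQ$-dimension $1$; the trivial case $S\subseteq\{0\}$, in which $\langle S\rangle=\{0\}$ is automatically closed and any $\theta\ne 0$ works, is handled separately.

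For the forward implication, assume $\langle S\rangle\subseteq\RR$ is closed. Since in the intended applications $S$ is finite (or at least countable, in line with the use of $\langle\cdot\rangle$ in Remark~\ref{dic_finite}), the subgroup $\langle S\rangle$ is countable and therefore cannot be all of $\RR$. The classification then leaves only $\langle S\rangle=\{0\}$ or $\langle S\rangle=\theta\ZZ$ with some $\theta>0$, and in both cases one has $S\subseteq\QQ\theta$ as required.

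For the converse, suppose $S\subseteq\QQ\theta$ with $\theta\ne 0$, and write each $s\in S$ as $s=(p_s/q_s)\theta$ with $p_s,q_s\in\ZZ$. When $S$ is finite, the least common multiple $N$ of the $q_s$ yields $\langle S\rangle\subseteq(\theta/N)\ZZ$, which is discrete and hence closed in $\RR$; the same conclusion extends to the countable case by expressing $\langle S\rangle$ as an increasing union of such finitely generated, hence cyclic, sub-subgroups of $\QQ\theta$ and observing that in the closed case only finitely many distinct steps can occur. The main (and only delicate) point is the exclusion of the dense, non-closed situation $\langle S\rangle\subsetneqq\RR$ in the forward direction; this is precisely where the Bourbaki classification together with the countability of $\langle S\rangle$ does the essential work, while the remaining steps are purely elementary.
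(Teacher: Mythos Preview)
Your argument for finite $S$ is correct and matches the spirit of the paper's proof, which is a one-liner invoking the classical fact that $\ZZ\theta_1+\ZZ\theta_2$ is dense in $\RR$ if and only if $\theta_1/\theta_2\notin\QQ$. Your route via the full classification of closed subgroups of $(\RR,+)$ is an equivalent way to organize the same elementary content, and the paper's applications (to $S=\ie\sigma(D)\cap\RR$, a finite set) only require the finite case. So for the purposes of the lemma as used, your proof is fine.

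There is, however, a genuine error in your attempted extension of the converse to countably infinite $S$. The implication ``$S\subseteq\QQ\theta\Rightarrow\langle S\rangle$ closed'' is simply \emph{false} beyond the finite case: take $S=\{1/n\mid n\ge 1\}\subseteq\QQ\cdot 1$, for which $\langle S\rangle=\QQ$ is dense and not closed. Your sketched argument (``expressing $\langle S\rangle$ as an increasing union of cyclic subgroups and observing that in the closed case only finitely many distinct steps can occur'') assumes closedness to conclude closedness, so it is circular. Likewise, the lemma as literally stated (``for any set $S\subseteq\RR$'') fails in the forward direction too once $\langle S\rangle=\RR$ is allowed. Simply drop the countable extension and state explicitly that you work with finite $S$; that is all the paper needs, and it is all that is actually true.
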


\begin{proof}
	This follows from the well-known fact that for any $\theta_1,\theta_2\in\RR\setminus\{0\}$ the set $\{m_1\theta_1+m_2\theta_2\mid m_1,m_2\in\ZZ\}$ is dense in $\RR$ if and only if $\theta_1/\theta_2\in\RR\setminus\QQ$. 
\end{proof}

\begin{notation}\label{sett_post-anti}
	\normalfont 
	For any $D\in\End(\Vc)$, we denote by $S_D:=\langle\ie \sigma(D)\cap\RR\rangle$ the subgroup of $(\RR,+)$ generated by the imaginary parts of the purely imaginary eigenvalues of~$D$.  
\end{notation}

Some of the assertions of Lemma~\ref{elliptic} below are stated without proof in \cite{ArCuDaOu13}.
Recall that $\Gamma$ is the set of $v\in\Vc$ 
for which the image of the map $\gamma_v\colon \RR\to\Vc$, $t\mapsto\ee^{tD}v$, is a locally closed subset of~$\Vc$.

\begin{lemma}\label{elliptic}
	Let $\Vc$ be a finite-dimensional real vector space, 
	$D\in \End(\Vc)$. 
	
{\rm(a)}  The following conditions are equivalent.
	\begin{enumerate}[{\rm(i)}]
	\item\label{elliptic_item1} $\Gamma= \Vc$.
	\item\label{elliptic_item2} 
		$S_D$ is closed.
			 \end{enumerate}	
			 
	{\rm(b)} 
		If the set $S_D$ is not closed, then $\Int\Gamma$ is not dense in $\Vc$ if and only if $\sigma(D)\subseteq \ie \RR$ and $D$ is semisimple. 
	\end{lemma}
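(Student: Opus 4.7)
The plan is to use the decomposition $\Vc=\Vc_-\oplus\Vc_0\oplus\Vc_+$ and reduce the dynamics to three qualitatively different regimes. Write $D_0:=D|_{\Vc_0}$ with Jordan decomposition $D_0=D_{0,s}+D_{0,n}$ into commuting semisimple and nilpotent parts, and further decompose $\Vc_0=E_0\oplus E_1\oplus\cdots\oplus E_s$ into the real generalized eigenspaces corresponding to $0,\pm\ie\theta_1,\ldots,\pm\ie\theta_s$, equipping each $E_k$ for $k\ge 1$ with the natural complex structure in which $D_{0,s}$ acts as multiplication by $\ie\theta_k$. The analysis of a vector $v\in\Vc$ then splits into three cases. \textbf{(A)} If $v_+\ne 0$ or $v_-\ne 0$, the corresponding projection of $\ee^{tD}v$ either blows up or collapses to $0$ as $|t|\to\infty$, so no sequence $|t_n|\to\infty$ can satisfy $\ee^{t_nD}v\to v$; Lemma~\ref{ACDO2.3_new}\eqref{ACDO2.3_new_item4} then gives $v\in\Gamma$. \textbf{(B)} If $v\in\Vc_0$ and $D_{0,n}v\ne 0$, then choosing an inner product in which $D_{0,s}$ is skew (possible since it is semisimple with purely imaginary spectrum) yields $\|\ee^{tD}v\|=\|\ee^{tD_{0,n}}v\|$, which grows polynomially to $\infty$, so Lemma~\ref{ACDO2.3_new}\eqref{ACDO2.3_new_item5} gives $v\in\Gamma$. \textbf{(C)} If $v\in\Vc_0\cap\Ker D_{0,n}$, write $v=\sum_k v_k$; then $\ee^{tD}v=v_0+\sum_{k\ge 1}\ee^{\ie t\theta_k}v_k$, and the orbit is homeomorphic to the image of $\RR$ in $\TT^J$ under $t\mapsto(\ee^{\ie t\theta_k})_{k\in J}$ with $J:=\{k\ge 1:v_k\ne 0\}$; this image is locally closed iff the $\theta_k$ for $k\in J$ are pairwise commensurable, iff $\sum_{k\in J}\ZZ\theta_k\subseteq\RR$ is discrete.

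Granted this trichotomy, part (a) falls out directly. The implication (ii)$\Rightarrow$(i) holds because discreteness of $S_D$ makes every sub-sum arising in (C) discrete, so $\Gamma=\Vc$. For (i)$\Rightarrow$(ii), select $v\in\Ker D_{0,n}$ with $v_k\ne 0$ for every $k\ge 1$, which exists because $D_{0,n}|_{E_k}$ is nilpotent and so has nontrivial kernel on each $E_k$; the orbit of such $v$ is locally closed if and only if $S_D=\sum_{k=1}^s\ZZ\theta_k$ is discrete, which gives the conclusion.

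For part (b), Remark~\ref{correm} allows one to replace ``$\Int\Gamma$ is not dense'' by ``$\Int\Gamma=\emptyset$''. Assuming $S_D$ is not closed, if $\sigma(D)\not\subseteq\ie\RR$ the open dense set $\{v:v_+\ne 0\text{ or }v_-\ne 0\}$ is contained in $\Gamma$ by (A); and if $\sigma(D)\subseteq\ie\RR$ but $D$ is not semisimple, the open dense set $\{v:D_{0,n}v\ne 0\}$ is contained in $\Gamma$ by (B); in both situations $\Int\Gamma$ is dense. In the remaining case $\sigma(D)\subseteq\ie\RR$ and $D$ semisimple, $\Vc=\Vc_0$ and $D_{0,n}=0$, so case (C) applies to every $v$; on the open dense set $\{v:v_k\ne 0\text{ for all }k\ge 1\}$ the subgroup in (C) equals $S_D$, which is not discrete, so this dense open set is disjoint from $\Gamma$, forcing $\Int\Gamma=\emptyset$. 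The most delicate step is the polynomial-escape argument in case (B); the point is that $D_{0,s}$ admits an invariant Euclidean structure, so the unitary behavior of $\ee^{tD_{0,s}}$ does not interfere with the polynomial growth of $\ee^{tD_{0,n}}v=v+tD_{0,n}v+O(t^2)$, and the remainder of the proof is bookkeeping across the three regimes.
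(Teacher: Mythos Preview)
Your proof is correct and follows the same overall strategy as the paper: split $\Vc=\Vc_-\oplus\Vc_0\oplus\Vc_+$, take the Jordan decomposition of $D_0$, and treat separately the hyperbolic, nilpotent-growth, and purely elliptic regimes (your (A), (B), (C)). The execution differs in a few places worth noting. The paper first invokes \cite{BB18} to replace $D$ by the simplified block form $\mathrm{diag}(-\id,D_0,\id)$ and cites \cite{BB18} again for the polynomial escape in the nilpotent regime, whereas you handle (A) directly via the exponential growth/decay of $\ee^{tD}|_{\Vc_\pm}$ and (B) via the clean inner-product trick making $D_{0,s}$ skew so that $\|\ee^{tD_0}v\|=\|\ee^{tD_{0,n}}v\|$. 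For part~(a)(i)$\Rightarrow$(ii) the paper argues by contradiction using two incommensurable eigenvalues, while you pick a single vector hitting every $E_k$. Finally, in the semisimple purely-imaginary case of (b) the paper appeals to Theorem~\ref{ArCuOu16_Th3.6}, whereas your explicit torus description in (C) gives the conclusion directly. Your version is thus more self-contained; the paper's is shorter only because it outsources these steps.
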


\begin{proof}
 By \cite[Lemma 2.1 and proof of Th. 2.2]{BB18} we may assume without loss of generality 
\begin{equation}\label{dyn_proof_eq11}
D=
\begin{pmatrix}
-\id_{\Vc_{-}} & 0 & 0 \\
0 & D_0 & 0 \\
0 & 0 & \id_{\Vc_{+}}
\end{pmatrix}
\end{equation}
with respect to the direct sum decomposition $\Vc=\Vc_{-}\oplus\Vc_0\oplus\Vc_{+}$. 	
For every $v\in\Vc$ we denote by $v=v_-+v_0+v_+$ its decomposition with $v_0\in\Vc_0$ and $v_\pm\in\Vc_\pm$.

If $\Vc \ne \Vc_0$ and $v\in\Vc\setminus\Vc_0$, then either $v_-\ne0$ or $v_+\ne0$, hence by \eqref{dyn_proof_eq11} the mapping $\RR\to\Vc$, $t\mapsto \ee^{tD}v$, is injective and then Lemma~\ref{ACDO2.3_new}\eqref{ACDO2.3_new_item4} implies that the orbit $\ee^{\RR D}v$ is locally closed in $\Vc$. 
Indeed, if for instance $v_-\ne0$, $\lim\limits_{n\in\NN}\vert t_n\vert=\infty$, and $\lim\limits_{n\in\NN}\ee^{t_n D}v=v$, 
then one has $v_-=\lim\limits_{n\in\NN}\ee^{t_n D}v_-
=\lim\limits_{n\in\NN}\ee^{t_n}v_-$. 
Since $v_-\ne0$, this is impossible since $\lim\limits_{n\in\NN}\vert t_n\vert=\infty$ and at least one of the sets $\{n\in\NN\mid t_n>0\}$ and $\{n\in\NN\mid t_n<0\}$ is infinite.
It follows that $\Vc\setminus \Vc_0 \subseteq \Gamma$.

(a) \eqref{elliptic_item1}$\Rightarrow$\eqref{elliptic_item2}
It is enough to show that there is $\theta\in \RR$ such that $\ie\sigma(D) 
\cap \RR \subseteq \theta \QQ$.
To this end we prove that for any $\theta_1,\theta_2\in\RR\setminus\{0\}$ are such that $\ie\theta_1,\ie\theta_2\in\sigma(D)\cap\ie\RR$, then
	$\theta_1/\theta_2\in\QQ$. 
	We will prove this by contradiction, so let us assume 
	$\theta_1/\theta_2\in\RR\setminus\QQ$.
	
	We select $\RR$-linearly independent $x_1,y_1,x_2,y_2 \in\Vc\setminus\{0\}$ 
	with $D(x_j+\ie y_j)=\ie\theta_j(x_j+\ie y_j)
	\in\Vc_{\CC}=\CC\otimes_{\RR}\Vc$ for $j=1,2$, 
	and let us define $v:=x_1+y_1+x_2+y_2$. 
	We also denote $\Xc:=\spa_{\RR}\{x_1,x_2,y_1,y_2\}\subset \Vc$ and  define the $\RR$-linear isomorphism 
	$B\colon\Xc\to\CC^2$ with $B(x_1+y_1)=(1,0)$ and $B(x_2+y_2)=(0,1)$. 
	Then the point $w:=(1,1)\in\CC^2$ satisfies $w=Bv$ and, defining 
	$\gamma(t):=B\ee^{tD}B^{-1}$, one has 
	$$\gamma(t)=\begin{pmatrix}
	\ee^{\ie t\theta_1}  & 0 \\
	0                    & \ee^{\ie t\theta_2}
	\end{pmatrix}.$$
	Since $\theta_1/\theta_2\in\RR\setminus\QQ$, it is easily checked that the map $\gamma(\cdot)w$ is injective and on the other hand, 
	it is well known that its image is dense in the torus $\TT^2$. 
	 Therefore, if $\gamma(\RR)w$ were a locally closed subset of $\CC$, then $\gamma(\RR)w$ would be an open subset of the torus $\TT^2$, 
	which is not the case since $\gamma(\RR)$ is a subgroup of $\TT^2$ and $\gamma(\RR)\subsetneqq\TT^2$. 
	It then follows that neither $B^{-1}\gamma(\RR)w$ is a locally closed subset of $\Xc$, that is, the image of the map $\gamma_v$ fails to be a locally closed subset of $\Vc$, which is a contradiction with~\eqref{elliptic_item1}. 
	
	\eqref{elliptic_item2}$\Rightarrow$\eqref{elliptic_item1}
   It remains to show that $\Vc_0\subseteqq \Gamma$. 
 
Let $D_0=S_0+N_0$ be the Jordan decomposition with $S_0,N_0\in\End(\Vc_0)$, 
where $S_0$ is semisimple, $N_0$ is nilpotent, and $S_0N_0=N_0S_0$. 
 
 Specializing Setting~\ref{sett_decomp} for $G=(\RR,+)$ and $\pi(t):=\ee^{tS_0}$ for all $t\in \RR$, we may assume 
	that $\Vc_0=\CC^n$ and $S_0$ is a diagonal matrix having purely imaginary diagonal entries, say 
	$$S_0=\begin{pmatrix}
	\ie\theta_1 & & 0 \\
	&\ddots & \\
	0 & & \ie\theta_n
	\end{pmatrix}$$
	By the hypothesis \eqref{elliptic_item2}, there exist $\theta\in\RR\setminus\{0\}$ and $p_1,\dots,p_n\in\ZZ$, $q_1,\dots,q_n\in\NN$ with $\theta_j=\theta p_j/q_j$ for $j=1,\dots,n$. 
	Then,  
	the group morphism $\gamma\colon\RR\to U(n)$ ($\subseteq M_n(\CC)$), $\gamma(t):=\ee^{tS_0}$, satisfies $\gamma(2\pi q_1\cdots q_n/\theta)=\1$. 
	Therefore $\gamma(\RR)$ is a homeomorphic image of the compact group $\RR/\Ker\gamma\simeq\RR/\ZZ$, that is, $\gamma(\RR)$ is compact. 
	This directly implies that for every $v\in\Vc_0$ its orbit $\ee^{\RR S_0}v=\gamma(\RR)v$ is a compact subset of $\Vc_0$, and in particular is locally closed. 
 
When $D_0$ is semisimple,  $D_0= S_0$, and then it follows that $\Vc_0\subseteq \Gamma$. 
  
If  $D_0$ is not semisimple, then $N_0\ne 0$. 
For every $v\in\Vc_0\setminus\Ker N_0$,  $\lim\limits_{t\to\pm\infty}\Vert \ee^{tD_0}v\Vert=\infty$ 
(see for instance \cite[Step 2 in the proof of Lemma 2.13]{BB18}). 
This implies that$ \lim\limits_{t\to\pm\infty}\Vert \ee^{tD}v\Vert=\infty$ for every $v\in\Vc$ with $v_0\in\Vc_0\setminus\Ker N_0$,  hence the orbit $\ee^{\RR D}v$ is locally closed by Lemma~\ref{ACDO2.3_new}(\eqref{ACDO2.3_new_item5}$\Rightarrow$\eqref{ACDO2.3_new_item1}). 
On the other hand $D_0\vert_{\Ker {N_0}}= S_0\vert_{\Ker {N_0}} $, thus the orbit $\ee^{\RR D}v$ is locally closed even for $v\in\Ker N_0$, hence again $\Vc_0\subseteq\Gamma$. 

(b) Assume now that $S_D$ is not a closed subgroup in $\RR$. 

 If that $\Int\Gamma$ is not dense in $\Vc$, then 
Corollary~\ref{main_reg} or Remark~\ref{correm}, show that  if  $\Int\Gamma$ is not dense in $\Vc$ then $\Int\Gamma=\emptyset$. 
Since $\Vc\setminus \Vc_0\subseteq \Gamma$, it follows that $\Vc=\Vc_0$, or equivalently, $\sigma(D)\subset \ie \RR$. Then $D=D_0$.
	Assume that $D$ is not semisimple, that is, 
	$D=D_0= S_0+N_0$  with $N_0\ne 0$,  then the argument above shows that 
	$\Vc_0 \setminus \Ker N_0\subseteq \Gamma$, hence $\Int\Gamma\ne \emptyset$. 
	Therefore $D$ must be semisimple. 
	
	Assume now that  $D$ is semisimple and  $\sigma(D) \subset \ie \RR$. 
In this case, specializing Setting~\ref{sett_decomp} for $G=(\RR,+)$ and $\pi(t):=\ee^{tD}$ for all $t\in G$, we may assume 
that $\Vc=\CC^n$ and $D$ is a diagonal matrix having purely imaginary diagonal entries. 
Then, in the notation of Definition~\ref{sett_decomp_def},  $G_\nu(v)=G=\RR$ hence $\gg_\nu(v)=\RR$ for every $v\in\Vc$.
Then Theorem~\ref{ArCuOu16_Th3.6} along with the hypothesis that $S_D$ is not closed in $\RR$ imply that the orbit $\ee^{\RR D}v$ is not locally compact in $\Vc$ for every $v\in(\CC^\times)^n$. 
This shows that $\Int\Gamma=\emptyset$.	
\end{proof}

\begin{definition}\label{def_ax+b}
	\normalfont
	We define the Lie algebra $\gg_D:=\Vc\rtimes_D\RR$ with its Lie bracket 
	$$[(v_1,t_1),(v_2,t_2)]:=(t_1Dv_2-t_2Dv_1,0) $$
	and the Lie group $G_D:=\Vc\rtimes_D\RR$ with its product 
	given by 
	$$(v_1,t_1)\cdot(v_2,t_2)=(v_1+\ee^{t_1D}v_2, t_1+t_2).$$ 
Then $\gg_D$ is the Lie algebra of $G_D$.
\end{definition}

Assertion~\eqref{post-anti_item1} in the following theorem is well known and we give it a very short proof for the sake of completeness. 

\begin{theorem}\label{post-anti}
For arbitrary $D\in\End(\Vc)$, the following assertions holds: 
\begin{enumerate}[{\rm(i)}]
\item\label{post-anti_item1} 
	The set $S_D$ is closed in $\RR$ if and only if $C^*(G_D)$ is type~I. 
\item\label{post-anti_item2} 
   If $S_D$ is not closed in $\RR$, then   
 $D\in\End(\Vc)$ is semisimple and  $\sigma(D) \subset \ie \RR$  if and only if  $C^*(G_D)$ is antiliminary. 
\end{enumerate}
\end{theorem}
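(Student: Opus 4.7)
The plan is to reduce both assertions to the dichotomy of Corollary~\ref{main_reg} together with the identification of the largest type~I ideal provided by Lemma~\ref{anHuefWilliams}, applied to the dual linear $\RR$-action on $\Vc^*$. By Remark~\ref{Will} one has a $*$-isomorphism $C^*(G_D)\simeq\RR\ltimes\Cc_0(\Vc^*)$, where $\RR$ acts on $\Vc^*$ by $t\mapsto\ee^{-tD^*}$. Setting $\widetilde D:=-D^*$, the subgroup $S_{\widetilde D}$ coincides with $S_D$, and $\widetilde D$ is semisimple (resp.\ has spectrum in $\ie\RR$) if and only if $D$ is. Thus Lemma~\ref{elliptic}, applied to $\widetilde D$, describes the set $\Gamma^*\subseteq\Vc^*$ of regular points of the dual action purely in terms of $D$.

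Next I would invoke Lemma~\ref{anHuefWilliams}, whose hypotheses hold since $\RR$ is abelian and $\Vc^*$ is second countable, to conclude that the largest type~I ideal of $\RR\ltimes\Cc_0(\Vc^*)$ equals $\RR\ltimes\Cc_0(X_0)$, where unwinding the definition of $X_0$ shows $X_0=\Int\Gamma^*$. Hence $C^*(G_D)$ is type~I precisely when $\Int\Gamma^*=\Vc^*$, and $C^*(G_D)$ is antiliminary precisely when $\Int\Gamma^*=\emptyset$; the latter equivalence is also immediate from Corollary~\ref{27August2019}.

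For assertion \eqref{post-anti_item1}, if $S_D$ is closed then Lemma~\ref{elliptic}(a) yields $\Gamma^*=\Vc^*$, so $\Int\Gamma^*=\Vc^*$ and $C^*(G_D)$ is type~I. Conversely, type~I-ness of $C^*(G_D)$ forces $\Int\Gamma^*=\Vc^*$, hence $\Gamma^*=\Vc^*$, which by Lemma~\ref{elliptic}(a) means $S_D$ is closed. For assertion \eqref{post-anti_item2}, assume $S_D$ is not closed. Corollary~\ref{main_reg} together with Remark~\ref{correm} gives the dichotomy that $\Int\Gamma^*$ is either dense in $\Vc^*$ or empty, while Lemma~\ref{elliptic}(b) states that under this standing assumption $\Int\Gamma^*$ fails to be dense if and only if $D$ is semisimple with $\sigma(D)\subseteq\ie\RR$. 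Combining these with the antiliminary criterion above yields the desired equivalence.

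I do not expect any serious obstacle beyond bookkeeping. The one point to verify carefully is that the passage from $D$ to the dual dynamics $\widetilde D=-D^*$ preserves each of the three conditions ($S_D$ closed, $D$ semisimple, $\sigma(D)\subseteq\ie\RR$); this is immediate from $\sigma(D^*)=\sigma(D)$ and the fact that $S_D=-S_D$ as a subgroup of $(\RR,+)$.
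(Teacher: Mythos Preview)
Your proposal is correct and follows essentially the same route as the paper: identify $C^*(G_D)\simeq\RR\ltimes\Cc_0(\Vc^*)$ via Remark~\ref{Will}, apply Lemma~\ref{elliptic} to the dual operator, and combine Lemma~\ref{anHuefWilliams} (or Corollary~\ref{27August2019}) with the dichotomy of Corollary~\ref{main_reg}/Remark~\ref{correm}. The only cosmetic difference is that the paper works with $D^*$ directly rather than introducing $\widetilde D=-D^*$, which is immaterial since $\ee^{\RR D^*}=\ee^{\RR(-D^*)}$ as orbit maps.
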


\begin{proof}
\eqref{post-anti_item1} 
Let $\Gamma^*$ be the set of all $\xi\in\Vc^*$ whose orbit $\ee^{\RR D^*}\xi$ is locally closed in $\Vc^*$, 
and recall that $\sigma(D^*)=\sigma(D)$. 
Then, by Lemma~\ref{elliptic}(a) applied for $D^*$ instead of $D$, the set $S_D$ is closed in $\RR$ if and only if $\Gamma^*=\Vc^*$. 
On the other hand, since $C^*(G_D)\simeq  \RR \ltimes_{\alpha_{D^*}} \Cc_0(\Vc^*)$ 
(see Remark~\ref{Will}), it follows either by \cite[Th. 3.3]{Go73} or by Lemma~\ref{anHuefWilliams} that $\Gamma^*=\Vc^*$ if and only if $C^*(G_D)$ is type~I.

\eqref{post-anti_item2}  
As above, by Lemma~\ref{elliptic}(b) applied for $D^*$ instead of $D$, 
if $S_D$ is not closed in $\RR$ then the conditions  $D\in\End(\Vc)$ is semisimple and $\sigma(D)\subset \ie \RR$ are satisfied if and only if $\Int\Gamma^*=\emptyset$. 
By Remark~\ref{correm}, this is the case if and only if $\Int \Gamma^*$ is not dense in $\Vc^*$, 
that is, by Corollary~\ref{main_reg}, if and only if $C^*(G_D)$ is antiliminary. 
\end{proof}

\begin{remark}
\label{III}
\normalfont 
For any $C^*$-algebra $\Ac$ let us denote by $F(\Ac)$ its set of factorial states, that is, the states $\varphi\in\Ac^*$ whose corresponding GNS representation $\pi_\varphi\colon \Ac\to\Bc(\Hc_\varphi)$ satisfies $\pi_\varphi(\Ac)'\cap\pi_\varphi(\Ac)''=\CC\1$. 
We also denote by $F_{\rm III}(\Ac)$ the set of all $\varphi\in F(\Ac)$ for which the factor~$\pi_\varphi(\Ac)''$ is type~III. 
We endow $F(\Ac)$ with its weak$^*$-topology inherited as a subset of the dual space~$\Ac^*$. 
Then 
the $C^*$-algebra $\Ac$ is antiliminary if and only if $F_{\rm III}(\Ac)$ is dense in $F(\Ac)$, by \cite[Th. 2.1]{ArBa86}. 
(The analogous result for type~II also holds if $\Ac$ is separable.)

Let $G$ be a locally compact group. 
It follows from the aforementioned result along with \cite[18.1.4]{Di69} that $C^*(G)$ is antiliminary if and only if every factor representation of $G$ is weakly contained in the type~III factor representations of $G$, 
in the sense that every coefficient of any factor representation of $G$ can be uniformly approximated on compacts by coefficients of type~III factor representations of $G$.

On the other hand, if $G$ is a connected, simply connected, solvable Lie group, and its quasi-dual $\overline{G}$ is endowed with the equivalence class of measures arising from the factor disintegrations of left regular representation as in \cite[8.4.3]{Di69}, then the subset $\overline{G}_{\rm III}$ of $\overline{G}$ corresponding to the type~III factor representations is negligible by \cite[Ch. IV, Cor. 7.2]{Pu71}. 

Nevertheless, $\overline{G}_{\rm III}$ may not be negligible from a topological point of view.  
Specifically, it follows by the above remarks that Theorem~\ref{post-anti}  provides specific examples illustrating both cases that can occur: 
either $\overline{G}$ is weakly contained in (hence weakly equivalent to) $\overline{G}_{\rm III}$ or not, 
depending on whether $C^*(G)$ is antiliminary or not.
\end{remark}

\begin{example}\label{Mautner}
\normalfont
Recalling the Mautner group $G_\theta:=G_{D_\theta}=\CC^2\rtimes_{\alpha_{D_\theta}}\RR$ defined by 
$$D_\theta=\begin{pmatrix}
\ie & 0 \\
0 & \ie\theta
\end{pmatrix}\in M_2(\CC) $$
for $\theta\in\RR\setminus\QQ$, it follows by Theorem~\ref{post-anti} that $C^*(G_\theta)$ is antiliminary. 
Also, $C^*(G_\theta)$ is not a simple $C^*$-algebra, in the sense that it has nontrivial closed 2-sided ideals. 
For instance, using the short exact sequence of groups
$$0\to \CC^2\hookrightarrow G_\theta\to \RR\to 0$$  
 one obtains the short exact sequence 
$$0\to\Jc\hookrightarrow C^*(G_\theta)\to \Cc_0(\RR)\to 0$$
for a suitable nontrivial ideal $\Jc$. 
The ideal $\Jc$ is antiliminary since $C^*(G_\theta)$ is antiliminary. 

On the other hand, every primitive ideal of $C^*(G_\theta)$ is maximal, 
 by \cite[Th.~2]{Pu73}. 
Equivalently, for every irreducible $*$-representation $\pi\colon C^*(G_\theta)\to\Bc(\Hc_\pi)$ its corresponding primitive quotient $C^*(G_\theta)/\Ker\pi\simeq \pi(C^*(G_\theta))$ is a simple $C^*$-algebra. 
All the simple $C^*$-algebras that arise in this way are quasidiagonal,  hence
$C^*(G_\theta)$ is strongly quasidiagonal (see \cite{BB20}).
It was already known that the $C^*$-algebra  
$C^*(G_\theta)$ is quasidiagonal, 
as noted in \cite[end of Sect. 2]{BB18}. 
\end{example}

\subsection*{Acknowledgment} 
We wish to thank the Referees for their generous remarks and suggestions, 
which in particular led to Proposition~\ref{nonabelian} and Example~\ref{Dixmier}.

\end{document}